\newtheorem{theorem}{Theorem}
\newtheorem{corollary}[theorem]{Corollary}
\newtheorem{lemma}[theorem]{Lemma}
\newtheorem{proposition}[theorem]{Proposition}
\newenvironment{customthm}[1]
  {\innercustomthm}
  {\endinnercustomthm}
\theoremstyle{definition}
\newtheorem{definition}[theorem]{Definition}
\newtheorem{definitions}[theorem]{Definitions}
\newtheorem{remark}[theorem]{Remark}
\newtheorem*{claim}{Claim} 
\newtheorem*{ack}{Acknowledgments}
\numberwithin{theorem}{section}
\newcommand{\co}{\colon \thinspace}
\newcommand{\abs}[1]{\left\lvert {#1} \right\rvert}  %\lvert and \rvert give better spacing than |
\renewcommand{\leq}{\leqslant}
\renewcommand{\geq}{\geqslant}
\newcommand{\ndisks}{\chi^+}
\newcommand{\R}{{\mathbb R}}
\newcommand{\X}{{\mathbb X}} 
\newcommand{\Y}{{\mathbb Y}}
\newcommand{\Z}{{\mathbb Z}} 
\newcommand{\frakX}{\mathfrak{X}} 
\newcommand{\frakY}{\mathfrak{Y}}
\newcommand{\bu}{\mathbf{u}} 
\newcommand{\bx}{\mathbf{x}}
\newcommand{\by}{\mathbf{y}}
\newcommand{\param}{{\mathchoice{\mkern1mu\mbox{\raise2.2pt\hbox{$
          \centerdot$}}
      \mkern1mu}{\mkern1mu\mbox{\raise2.2pt\hbox{$\centerdot$}}\mkern1mu}{
      \mkern1.5mu\centerdot\mkern1.5mu}{\mkern1.5mu\centerdot\mkern1.5mu}}}
\DeclareMathOperator{\scl}{scl}
\DeclareMathOperator{\cl}{cl}
\DeclareMathOperator{\id}{id}
\DeclareMathOperator{\interior}{int}
\DeclareMathOperator{\lcm}{lcm}
\DeclareMathOperator{\PSL}{PSL}
\begin{document}

\title[Stable commutator length in Baumslag--Solitar groups]{Stable commutator length in
  Baumslag--Solitar groups and quasimorphisms for tree actions} 

\author{Matt Clay} 
\address{Department of Mathematics, University of Arkansas, Fayetteville, AR 72701, USA} 
\email{mattclay@uark.edu}

\author{Max Forester}
\address{Department of Mathematics, University of Oklahoma, Norman, OK 73019, USA}
\email{forester@math.ou.edu}

\author{Joel Louwsma}
\address{Department of Mathematics, University of Oklahoma, Norman, OK 73019, USA}
\email{jlouwsma@gmail.com}

\begin{abstract}
This paper has two parts, on Baumslag--Solitar groups and on general
$G$--trees. 

In the first part we establish bounds for stable commutator length (scl)
in Baumslag--Solitar groups. For a certain class of elements, we further show that scl is computable and takes rational
values. We also determine exactly which of these elements admit extremal
surfaces. 

In the second part we establish a universal lower bound of $1/12$ for scl
of suitable elements of any group acting on a tree. This is achieved by
constructing efficient quasimorphisms. Calculations in the
group $BS(2,3)$ show that this is the best possible universal bound, thus
answering a question of Calegari and Fujiwara. We also establish scl
bounds for acylindrical tree actions. 

Returning to Baumslag--Solitar groups, we show that their scl spectra
have a uniform gap: no element has scl in the interval $(0, 1/12)$. 
\end{abstract}

\maketitle
\thispagestyle{empty}

\section{Introduction}

Stable commutator length has been the subject of a significant amount of
recent work, especially by Danny Calegari and his collaborators.
See~\cite{Calegari:scl} for an introduction to stable commutator length
and a desciption of much of this work.  A major breakthrough in this area
was Calegari's algorithm~\cite{Calegari:free} for computing stable
commutator length in free groups.  This algorithm can also be used to
compute stable commutator length in certain classes of groups that are
built from free groups in simple ways.  However, there are few other
instances in which stable commutator length can be computed explicitly,
with the exception of certain elements and classes of groups for which it
is known to vanish.

Other work involves studying the \emph{spectrum} of values taken by
stable commutator length on a given group.  In certain cases, this
spectrum has been shown to have a \emph{gap}, i.e.\ there is a range of
values that are the stable commutator length of no element of the group. 
For example, results of this type have been shown for free
groups~\cite{DH}, for word-hyperbolic groups~\cite{CF}, and recently for
mapping class groups~\cite{BBF}. Such results
have often involved constructing quasimorphisms with certain
properties, thus relying on a dual interpretation of stable commutator
length in terms of quasimorphisms.

The primary goal of this paper is to understand stable commutator length
in Baumslag--Solitar groups. We obtain both quantitative and qualitative
results. On the way to establishing the \emph{gap theorem}
below, we digress in Section \ref{sec:qm} to construct efficient
quasimorphisms in the completely general setting of groups acting on
trees, and derive some consequences. These results may be of independent
interest to some readers. 

\subsection*{Stable commutator length in Baumslag--Solitar groups}

We use the presentation $\langle \,a, t \mid t a^m t^{-1} = a^{\ell}
\,\rangle$ for the Baumslag--Solitar group $BS(m,\ell)$, and we generally
assume that $m\not= \ell$. Then, stable commutator length is
defined exactly on the elements of $t$--exponent zero. We build on the
approach taken in \cite{BCF} and attempt to encode the computation of stable
commutator length as the output of a linear programming problem. This
approach used the notions of the \emph{turn graph} and \emph{turn circuits}
to encode the geometric data of an admissible surface. 

In the present setting, encoding this geometric data requires the use of
a \emph{weighted} turn graph instead, to account for winding numbers not 
present in the case of free groups. Even so, there is further winding
data, and the natural encoding leads to an infinite-dimensional linear
programming problem. By restricting to words of alternating $t$--shape, we
are able to reduce to a finite-dimensional problem. 

\begin{theorem}[Theorem \ref{scl-equality-alternating} and
  Corollary \ref{scl-alternating-rational}]\label{mainthm} 
Suppose $g \in BS(m,\ell)$, $m\not= \ell$, has alternating
$t$--shape. Then there is a finite-dimensional, rational linear
programming problem whose solution yields the stable commutator length of
$g$. In particular, $\scl(g)$ is computable and is a rational number. 
\end{theorem}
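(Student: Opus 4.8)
The plan is to follow the strategy outlined before the theorem: encode admissible surfaces for $g$ combinatorially and then show that, under the alternating hypothesis, the resulting optimization becomes a finite rational linear program.

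First I would set up the combinatorial model. Given $g$ of $t$--exponent zero, an admissible surface $S \to X$ (where $X$ is the presentation complex, or rather the corresponding graph of spaces with vertex group $\Z$ and the HNN edge) can be put in a normal form relative to the Bass--Serre tree: the preimage of the $t$--annulus cuts $S$ into pieces mapping to the vertex space, which is a circle. Each such piece is a planar surface mapping to the circle, carrying a winding number, and its boundary arcs are glued along the edge-annulus according to the relation $ta^mt^{-1}=a^\ell$. I would encode the cyclic pattern of $t^{\pm 1}$-crossings along $\partial S$ by the turn graph of $g$ (as in \cite{BCF}), but now decorate each turn with an integer weight recording the winding number contributed by the disk pieces filling that turn. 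A turn circuit then corresponds to a family of vertex-space pieces, and the Euler characteristic of $S$ is expressed, via the usual $-\chi = \text{(number of pieces)} - \text{(gluing count)}$ bookkeeping, as a linear functional of the circuit multiplicities and the associated weights, subject to linear gluing-compatibility constraints coming from the relation (these constraints are where the coefficients $m,\ell$ enter, forcing divisibility/congruence conditions on the weights).

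Next, I would observe that $\scl(g)$ equals the infimum of $-\chi^-(S)/(2n(S))$ over admissible surfaces, and rewrite this infimum as the optimum of a linear program whose variables are the (nonnegative, rational) circuit weights and whose objective is the normalized Euler characteristic. Without further restriction the winding-number data is unbounded, so this is a priori an infinite-dimensional program (infinitely many integer-weighted turn circuits). The key reduction — and this is the step I expect to be the main obstacle — is to bound the relevant winding numbers when $g$ has \emph{alternating} $t$--shape. The alternating condition should force the turn graph to have a restricted structure in which only finitely many turn circuits can occur with nonzero multiplicity in an optimal (or near-optimal) surface, and moreover the winding-number weights on those circuits are constrained to lie in a fixed finite set (or a fixed bounded box) by the gluing congruences; one then truncates to a finite collection of variables while showing the truncation does not change the optimum. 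Establishing this boundedness rigorously — i.e.\ that excess winding can always be removed or absorbed without increasing $-\chi^-/2n$ — is the technical heart, and I would prove it by a surgery/exchange argument on the surface: an annular piece with large winding number can be compressed or replaced by a simpler configuration of the same boundary type with no greater complexity.

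Finally, once the problem is finite-dimensional, rationality and computability are essentially automatic. The objective and all constraints have coefficients in $\Z$ (or $\Q$), so by standard linear programming theory the optimum is attained at a rational vertex of the feasible polytope and is a rational number, computed in finite time; this gives Corollary~\ref{scl-alternating-rational}. The one subtlety to check is that the LP optimum genuinely equals $\scl(g)$ rather than merely bounding it — i.e.\ that every feasible rational point of the LP is realized (after clearing denominators) by an honest admissible surface — which follows from a construction reversing the normal-form analysis, assembling vertex-space pieces and edge-annuli according to the circuit data into a surface with the prescribed boundary and Euler characteristic.
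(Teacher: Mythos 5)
Your proposal correctly identifies the broad framework — encode admissible surfaces via a turn graph decorated with winding data, reduce to a finite-dimensional rational LP, and then prove the LP optimum equals $\scl(g)$ — but it misidentifies where the alternating hypothesis enters, and this is a genuine gap rather than a detail.

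In the paper, the \emph{lower-bound} LP (Theorem~\ref{scl-lowerbound-lp}) is already finite-dimensional and rational for \emph{every} element of $t$--exponent zero, alternating or not. Finite-dimensionality does not come from ``bounding the relevant winding numbers when $g$ has alternating $t$--shape,'' as you suggest. Instead the paper chooses two \emph{a priori} finite families of circuits in the turn graph — $\frakX$, the potential disks that decompose into at most $M=\max\{|m|,|\ell|\}$ embedded circuits, and $\frakY$, all embedded circuits — and works in $\X\oplus\Y$. Proposition~\ref{better-vector} then shows that \emph{any} admissible surface maps to a vector in the cone $C$ whose $\abs{\cdot}_\X$ value dominates $\ndisks(S_1)$: a turn circuit bounding a disk that decomposes into $k>M$ embedded circuits $\beta_{i_j}$ is replaced by $\sum_j \tfrac{1}{\mu(\beta_{i_j})}\bx_{i'_j}$, using the observation that $\mu(\beta)\beta\in\frakX$ for every embedded circuit $\beta$. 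The count $\sum_j 1/\mu(\beta_{i_j}) \geq k/M \geq 1$ is the whole trick; no surgery on the surface, no bounded winding box, and no alternating hypothesis is involved. Your proposed ``surgery/exchange argument to remove excess winding'' addresses a problem the paper sidesteps entirely by this relaxation-and-counting device.

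Where the alternating hypothesis \emph{is} essential is in the step you dismiss as ``the one subtlety to check'': that the LP optimum is attained (in the limit) by honest surfaces. This fails for general $g$ — Remark~\ref{strict inequality} gives $a^2t^2at^{-1}at^{-1}\in BS(2,3)$ as a counterexample, where the unique optimal LP vertex cannot be realized by any sequence of surfaces. For alternating $g$, the paper's construction (Theorem~\ref{scl-equality-alternating}) builds, from an optimal vertex $\bu$ scaled by a large $N$, a collection of polygons (one per circuit of type $m$ or $\ell$), pairs their band-edges, forms a bipartite dual graph $\Delta$, reduces the number of components of $\Delta$ to at most $K$ (the number of turn-graph edges) by an edge-swap argument, and then solves the winding-balance equations \eqref{eq:type-m-weight}--\eqref{eq:type-l-weight} for the rectangle weights $\omega(e)$ so that all but one vertex per component of $\Delta$ is balanced. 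The at-most-$K$ unfilled inner boundary curves are then capped using Lemma~\ref{fill-with-zero} and Lemma~\ref{scl-a}, and the loss $K/2N$ is made smaller than any $\epsilon$ by choosing $N$ large. The fact that every circuit is of pure type $m$ or type $\ell$ (no mixed type) is exactly what makes $\Delta$ bipartite and the balance system solvable up to one free constraint per component; this is the technical heart, and your proposal does not supply it.

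Rationality and computability (Corollary~\ref{scl-alternating-rational}) then follow as you say, once $\scl(g)=L(g)$ is established; that part of your argument is fine.
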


More generally, the linear programming problem constructed in the proof
of Theorem \ref{mainthm} is defined for any element $g$ of $t$--exponent zero,
and its solution provides a \emph{lower bound} for $\scl(g)$ (see Theorem
\ref{scl-lowerbound-lp}). What is difficult is to convert the solution
into an admissible surface to obtain a matching upper bound; the encoding
procedure from surfaces to vectors \emph{loses} information, and not
every vector can be realized by a surface. 

In some cases the solution to the linear programming problem in Theorem
\ref{mainthm} can be expressed in a closed formula. We show in
Proposition \ref{prop:length2} that if $m \nmid i$ and $\ell \nmid j$ then 
\begin{equation}\label{eq:formula}
\scl\bigl(ta^i t^{-1}a^j\bigr) \ = \ 
\frac{1}{2} \left( 1 - \frac{\gcd(i,m)}{\abs{m}} -
  \frac{\gcd(j,\ell)}{\abs{\ell}}\right).  
\end{equation}

Next we characterize the elements of alternating $t$--shape for which there
is a surface, known as an \emph{extremal surface}, that realizes the
infimum in the definition of stable commutator length. Such surfaces are
important in applications of stable commutator length to problems in
topology. It turns out that many elements have extremal surfaces, and
many do not. 

\begin{customthm}{\ref{extremal-characterization}}
  Let $g =\prod_{k=1}^r ta^{i_k} t^{-1}a^{j_k}\in BS(m,\ell)$,
  $m\neq\ell$. There is an extremal surface for $g$ if and only if
  \[
    \ell\sum_{k=1}^r i_k =-m\sum_{k=1}^r j_k.
  \] 
\end{customthm}

This allows us to find many examples of elements with rational stable
commutator length for which no extremal surface exists.  Previous
examples of this phenomenon were found in free products of abelian groups
of higher rank (see~\cite{Calegari:sails}).

Our last main result for Baumslag--Solitar groups is more qualitative in
nature and concerns the scl spectrum. 

\begin{customthm}{\ref{th:gap}}[Gap theorem]
  For every element $g \in BS(m, \ell)$, either $\scl(g) = 0$ or
  $\scl(g) \geq 1/12$. 
\end{customthm}

Thus, similar to hyperbolic groups, the spectrum has a gap above
zero. This theorem is proved in Section \ref{sec:gap}, and it depends
heavily on results in Section \ref{sec:qm} to be discussed
shortly. Nevertheless, these latter results do not apply to every element
of $BS(m,\ell)$ (namely, those that are not \emph{well-aligned}). To
study stable commutator length of these left-over elements, we take
advantage of special properties of the Bass--Serre trees for these
groups. It is interesting to note that, in contrast with Theorem
\ref{cor:acylindrical} below, it is the \emph{failure} of acylindricity
of these trees that is used in establishing the scl gap. 

\subsection*{Stable commutator length in groups acting on trees}

In order to prove the gap theorem we turn to the dual viewpoint of
quasimorphisms on groups. According to Bavard Duality
\cite{Bavard:duality}, a lower bound for $\scl(g)$ can be obtained by
finding a homogeneous quasimorphism $f$ on $G$ with $f(g) = 1$ and of
small defect. Indeed, if the defect of $f$ is $D$ then $\scl(g) \geq
1/2D$. 

Many authors have constructed quasimorphisms on groups in settings
involving negative curvature. For the most part these constructions are
variants and generalizations of the Brooks counting quasimorphisms on
free groups \cite{Rhemtulla,Brooks:qm,Grigorchuk:qm}. These settings
include hyperbolic groups \cite{EF}, groups acting on Gromov-hyperbolic
spaces \cite{Fujiwara:gromov,CF}, amalgamated free products and HNN
extensions \cite{Fujiwara:amalgam}, and mapping class groups
\cite{BF,BBF}. 

One such result is Theorem D of \cite{CF}, due to Calegari and
Fujiwara. They showed that for any amalgamated product $G = A \ast_C B$ and
any appropriately chosen hyperbolic element $g\in G$, there is a homogeneous
quasimorphism $f$ on $G$ with $f(g) = 1$ and of defect at most
$312$. This bound is of interest since it is universal, independent of
the group. 

In Theorem \ref{defect} we construct efficient quasimorphisms, of
defect at most $6$, for any group acting on a tree. These are similar to
the ``small'' counting quasimorphisms introduced by Epstein--Fujiwara
\cite{EF}, except that they are specifically tailored to the geometry of
tree actions; the counting takes place in the tree rather than
a Cayley graph. Moreover, by working directly with the homogenization of
the counting quasimorphism, we obtain a further improvement in the
defect. 

Using the calculation 
\eqref{eq:formula} in the group $BS(2,3)$ (or alternatively, a different
calculation in $\PSL(2,\Z)$) we determine that $6$ is the smallest
possible defect that can be achieved in this generality, 
thus answering Question 8.4 of \cite{CF}. Expressed in terms of
stable commutator length, the result can be stated as follows. 
\begin{customthm}{\ref{th:well-aligned}}
  Suppose $G$ acts on a simplicial tree $T$. If $g\in G$ is
  well-aligned then $\scl(g) \geq 1/12$.
\end{customthm}
The same result holds for groups acting on $\R$--trees as well (Remark
\ref{rem:rtree}).  
Again, the bound of $1/12$ is the best possible. The condition of being
\emph{well-aligned} is necessary, and agrees with the double
coset condition in \cite{CF} in the case of the Bass--Serre tree of an
amalgam. 

Not every hyperbolic element is well-aligned. Indeed, there are
examples of $3$--manifold groups that split as amalgams containing
hyperbolic elements with very small stable commutator length; see
\cite{CF}. If we consider trees that are \emph{acylindrical} (see Section
\ref{sec:qm}) then we can obtain an additional lower bound that applies to
all hyperbolic elements. This bound is almost universal, depending only
on the acylindricity constant. Alternatively, there is a genuinely
uniform bound if one considers only elements of translation length
greater than or equal to the acylindricity constant. 

\begin{customthm}{\ref{cor:acylindrical}}
  Suppose $G$ acts $K$--acylindrically on a tree $T$ and let $N$ be
  the smallest integer greater than or equal to $\frac{K}{2} + 1$. 
\begin{enumerate}
\item If $g \in G$ is hyperbolic then either $\scl(g) = 0$ or
  $\scl(g) \geq 1/12N$. 
\item If $g \in G$ is hyperbolic and $\abs{g} \geq K$ then
  either $\scl(g) = 0$ or $\scl(g) \geq 1/24$. 
\end{enumerate}
In both cases, $\scl(g) = 0$ if and only if $g$ is conjugate to
  $g^{-1}$.
\end{customthm}

\begin{ack}
  Matt Clay is partially supported by NSF grant DMS-1006898. Max
  Forester is partially supported by NSF grant DMS-1105765.
\end{ack}

\section{Preliminaries}

\subsection*{Stable commutator length}
Stable commutator length may be defined as follows, according to
Proposition 2.10 of \cite{Calegari:scl}. 

\begin{definition}\label{def1} 
Let $G = \pi_1(X)$ and suppose $\gamma \co S^1 \to X$ represents the
conjugacy class of $g\in G$. The \emph{stable commutator length} of $g$
is given by 
\begin{equation}\label{scldef} 
 \scl(g) \ = \ \inf_S \frac{-\chi(S)}{2n(S)},
\end{equation}
where $S$ ranges over all singular surfaces $S \to X$ such that 
\begin{itemize}
\item $S$ is oriented and compact with $\partial S \not= \emptyset$ 
\item $S$ has no $S^2$ or $D^2$ components 
\item the restriction $\partial S \to X$ factors through $\gamma$; that
is, there is a commutative diagram: 
\[\begin{CD}
\partial S @>>> S \\
@VVV @VVV\\
S^1 @>{\gamma}>> X
\end{CD} \]
\item the total degree, $n(S)$, of the map $\partial S \to S^1$
  (considered as a map of oriented $1$--manifolds) is non-zero. 
\end{itemize}
\end{definition}

A surface $S$ satisfying the conditions above is called an
\emph{admissible surface}. If, in addition, each component of $\partial
S$ maps to $S^1$ with \emph{positive} degree, we call $S$ a
\emph{positive admissible surface}. It is shown in Proposition 2.13 of
\cite{Calegari:scl} that the infimum in the definition of scl may be
taken over positive admissible surfaces. Such surfaces (admissible or
positive admissible) exist if and only if $g^k \in [G,G]$ for some nonzero integer $k$. If this does not occur then by convention $\scl(g) = \infty$
(the infimum of the empty set). 

A surface $S \to X$ is said to be \emph{extremal} if it realizes the
infimum in \eqref{scldef}. Notice that if this occurs, then $\scl(g)$ is a
rational number. 

\medskip

In order to bound scl from above, one needs to construct an admissible
surface realizing a given value of $\frac{-\chi(S)}{2n(S)}$. Sometimes
a procedure for building a surface cannot be completed, leaving a
surface with portions missing. The following result can be used in this
situation. 

\begin{lemma}\label{fill-with-zero}
Let $S$ be a compact oriented surface with no $S^2$ or $D^2$ components,
and whose boundary is expressed as two non-empty families of curves
$\partial_1S$ and $\partial_2 S$. Suppose $S\to X$ is a map taking the
components of $\partial_1 S$ to group elements $a_1, \ldots, a_k \in
\pi_1(X)$ and all components of $\partial_2 S$ to powers of the single
element $g \in \pi_1(X)$, with total degree $n\not= 0$. Then there is an
inequality 
\[  \scl(g) \ \leq \ \frac{-\chi(S)}{2n} + \frac{1}{n}\Bigl(\sum_i
\scl(a_i)\Bigr).\] 
\end{lemma}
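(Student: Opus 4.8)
The plan is to use the variational definition of $\scl(g)$ together with the standard trick of ``capping off'' or ``filling in'' boundary components by admissible surfaces for the individual elements $a_i$. Fix $\varepsilon > 0$. For each $i$, the definition of $\scl(a_i)$ gives an admissible surface $R_i \to X$ for $a_i$ with $\frac{-\chi(R_i)}{2n_i(R_i)} \leq \scl(a_i) + \varepsilon$, where $n_i = n(R_i) \neq 0$ is the degree of $\partial R_i \to S^1$ (using the circle representing the conjugacy class of $a_i$). By replacing $R_i$ with $n$ disjoint copies of itself we can assume each $R_i$ has total degree divisible by whatever we need; the point is to arrange the boundary of the filling surface along $\partial_1 S$ to match up with $\partial_1 S$ combinatorially. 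More precisely, I would take an appropriate number of copies of $S$ and of each $R_i$ so that the copies of $R_i$'s boundary can be glued to the copies of the $a_i$-labelled components of $\partial_1 S$ (possibly after further subdividing/taking covers of the boundary circles so the degrees agree).

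The key computation is additivity of Euler characteristic under gluing along circles: if $\Sigma$ is obtained from $pS \sqcup \bigsqcup_i q_i R_i$ by gluing $\partial_1$-curves to $R_i$-boundaries along circles, then $\chi(\Sigma) = p\,\chi(S) + \sum_i q_i \chi(R_i)$, since $\chi(S^1) = 0$. The remaining boundary $\partial_2 \Sigma$ consists of copies of the $g$-powered curves, with total degree $p \cdot n$. One must check $\Sigma$ has no $S^2$ or $D^2$ components: $S^2$ is impossible since $\Sigma$ has nonempty boundary $\partial_2 \Sigma$ on every component that came from $S$, and any component built entirely from $R_i$'s already had this property; a $D^2$ component would force $\chi > 0$ on a piece, but one can discard disk components as in the standard argument (they can only increase $-\chi$, or rather, one notes the hypothesis on $S$ and $R_i$ and checks no disk is created — gluing a non-disk to anything along a circle cannot produce a disk). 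Then $\Sigma$ is admissible for $g$, so
\[
\scl(g) \ \leq \ \frac{-\chi(\Sigma)}{2 p n} \ = \ \frac{-p\,\chi(S) - \sum_i q_i \chi(R_i)}{2pn} \ = \ \frac{-\chi(S)}{2n} + \frac{1}{n}\sum_i \frac{q_i \bigl(-\chi(R_i)\bigr)}{2p}.
\]
Choosing the multiplicities so that $q_i/p$ is (close to) $n/n_i(R_i)$ — which is exactly the ratio needed for the boundary degrees to match — gives $\frac{q_i(-\chi(R_i))}{2p} \approx \frac{n(-\chi(R_i))}{2 n_i(R_i)} \leq n(\scl(a_i) + \varepsilon)$, yielding $\scl(g) \leq \frac{-\chi(S)}{2n} + \sum_i \scl(a_i) + k\varepsilon$ (or a cleaner bookkeeping gives exactly $\frac{1}{n}\sum_i \scl(a_i)$ in the limit). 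Letting $\varepsilon \to 0$ gives the claimed inequality.

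The main obstacle is purely combinatorial bookkeeping: making the boundary degrees along $\partial_1 S$ match those of the chosen $R_i$'s so that the gluing is legitimate. This is handled by passing to covers of the boundary circles and taking suitably many disjoint copies of $S$ and of each $R_i$ — since $n_i(R_i)$ and the degree of $\partial_1 S$ on the $a_i$-curves are fixed nonzero integers, a common multiple exists — and the $\varepsilon$'s absorb any slack. One should also be slightly careful that if some $a_i$ is not rationally null-homologous then $\scl(a_i) = \infty$ and the inequality is vacuous, so we may assume all $\scl(a_i) < \infty$; and that the degree $n$ of $\partial_2 S \to S^1$ is what controls the normalization, which is consistent with the statement. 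No genuinely hard idea is needed beyond this surface-surgery argument, which is the standard engine behind scl computations.
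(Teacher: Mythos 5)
Your overall strategy---glue filling surfaces for the $a_i$ onto the $\partial_1$--curves of (copies of) $S$ and compute Euler characteristic additively---is the same as the paper's, so this is not a genuinely different route. However, the proposal glosses over the degree--matching step, and that step is essentially the entire technical content of the proof. Here is the concrete difficulty. Each component of $\partial_1 S$ maps to $a_i$ with degree $1$, while the boundary components of an admissible surface $R_i$ for $a_i$ map to various powers of $a_i$, generically of degree $\ne\pm1$ and possibly of both signs. Gluing two surfaces along a circle requires that the two boundary maps agree under an orientation-reversing homeomorphism of the circle; this forces the two degrees to be negatives of one another. You cannot attach a degree-$1$ circle to a degree-$d$ circle for $d\ne -1$. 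Taking disjoint copies of $S$ and of $R_i$, as you suggest, does not fix this: disjoint union does not change the degree of any boundary component, so the mismatch persists. Nor can one take a ``cover of a boundary circle'' in isolation---any cover must be a cover of the whole surface, and one must track what happens to all the other boundary components.

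The paper resolves this with two non-obvious ingredients that your proposal omits. First, it builds a tower of covers $S_q\to\cdots\to S_0=S$ using Calegari's Lemma 1.12, which requires the delicate hypothesis that the number of boundary components being unwrapped, minus one, is coprime to the covering degree; the prime-factorization tower and the careful bookkeeping of $\partial_1 S_i$ having exactly $k$ curves exist precisely to make that hypothesis hold at each stage. The result is a cover in which each $\partial_1$--curve maps to $a_i^M$ with $a_i^M$ null-homologous. Second, once $a_i^{MN}$ is null-homologous and $N$ is large, $a_i^{MN}$ bounds an admissible surface with a \emph{single} boundary circle of degree one approximating $\scl$ well, so the gluing is a bona fide circle gluing with matching degrees. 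Without these two steps the glued surface $\Sigma$ in your proposal is not actually constructed, and the identity $\chi(\Sigma)=p\,\chi(S)+\sum_i q_i\chi(R_i)$ presupposes a gluing that cannot be carried out. This is a genuine gap, not a matter of bookkeeping to be absorbed into $\varepsilon$.
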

More generally, if one has defined scl for chains, the sum on the right
hand side may be replaced by $\scl(\sum_i a_i)$, which may be finite even
when the original sum was not. 

\begin{proof}
  We first show how to construct a cover of $S$ that unwraps the curves
  in $\partial_1 S$ to give a collection of curves each of which is
  trivial in $H_1(X)$.  Let $b$ be the number of boundary components of
  $S$.  Let $c_i$ be the order of the conjugacy class of $a_i$ in the
  abelianization of $\pi_1(X)$.  If the conjugacy class of some $a_i$ has
  infinite order in the abelianization of $\pi_1(X)$, then
  $\scl(a_i)=\infty$ and the lemma is tautological.  Therefore we assume
  each $c_i$ is finite.  Let $M=\lcm(c_1,\dotsc,c_k)$, and consider the
  prime factorization $M=p_1^{d_1}\dotsm p_q^{d_q}$.  We construct a
  tower of covers $S_q\to S_{q-1}\to\dotsb\to S_1\to S_0=S$ as follows.
  For all $i$, the boundary $\partial S_i$ will be partitioned into two
  families of curves $\partial_1 S_i$ and $\partial_2 S_i$, where the induced map
  $S_i\to X$ takes the curves in $\partial_1 S_i$ to powers of the
  elements $a_1,\dotsc,a_k$ and the curves in $\partial_2 S_i$ to powers
  of the element $g$.  For all $i$, $\partial_1 S_i$ will consist of
  exactly $k$ curves and $\partial_2 S_i$ will consist of at least $b-k$
  curves.  

Suppose $S_{i-1}$ has been constructed.  Since $b-k\geq1$,
  there is some integer $e_i$ satisfying $k\leq e_i\leq b$ such that
  $e_i-1$ is relatively prime to $p_i$, and hence to $p_i^{q_i}$.
  Therefore Lemma 1.12 of~\cite{Calegari:scl} shows that, for any $e_i$
  boundary components of $S_{i-1}$, there is a $p_i^{d_i}$--sheeted
  covering $S_{i}\to S_{i-1}$ that unwraps these $e_i$ boundary
  components.  We choose these $e_i$ boundary components to be the $k$
  curves in $\partial_1 S_{i-1}$ and any $e_i-k$ curves in $\partial_2
  S_{i-1}$.  Then $\partial S_i$ is also partitioned into two collections
  of curves: those in the preimage of $\partial_1 S_{i-1}$ are said to be
  in $\partial_1 S_i$, and those in the preimage of $\partial_2 S_{i-1}$
  are said to be in $\partial_2 S_i$.  By construction, $\partial_1 S_i$
  consists of exactly $k$ curves and $\partial_2 S_i$ consists of at
  least $b-k$ curves. 

Iterating this procedure, we obtain a surface $S_q$
  that is a degree $M$ cover of $S$.  The induced map $S_q\to X$ takes the curves
  in $\partial_1 S_{q}$ to $a_1^M,\dotsc,a_k^M$ and the curves in
  $\partial_2 S_q$ to powers of $g$ with total degree $nM$.  Note that,
  for each $i$, $a_i^M$ is trivial in the abelianization of $\pi_1(X)$.

  Fix $\epsilon > 0$. For all $N$ relatively prime to $k-1$, we can
  construct a further cover $S_{q,N}\to S_q$ such that the curves in
  $\partial S_{q,N}$ are again partitioned into classes $\partial_1
  S_{q,N}$ and $\partial_2 S_{q,N}$, where the curves in $\partial_1
  S_{q,N}$ map to $a_1^{MN},\dotsc,a_k^{MN}$ in $X$.  Choose $N$
  sufficiently large that, for all $i$,
  the element $a_i^{MN}$ bounds an admissible surface $S'_i$ that
  approximates $\scl\bigl(a_i^{MN}\bigr)$ to within $\epsilon /k$. Since
  $\scl\bigl(a_i^{MN}\bigr)=MN\scl(a_i)$, we can also regard $S'_i$ as an
  admissible surface for $a_i$ that approximates $\scl(a_i)$ within
  $\epsilon/kMN$.  More precisely,
\[\frac{-\chi(S'_i)}{2MN} \ \leq \ \scl(a_i) + \frac{\epsilon}{kMN}\]
for each $i$. Now join the surfaces $S'_i$ along their boundaries to the
corresponding curves in $\partial_1 S_{q,N}$.  We thus obtain an
admissible surface $S''$ for $g$, with $n(S'') =nMN$. We have
\begin{align*}
  \frac{-\chi(S'')}{2n(S'')} \ &= \ \frac{-\chi(S_{q,N}) + \sum_i
    - \chi(S'_i)}{2nMN} \\
  &= \ \frac{-MN \chi(S) + \sum_i -\chi(S'_i)}{2nMN} \\
  &\leq \ \frac{-\chi(S)}{2n} + \frac{1}{n}\sum_i\Bigl(\scl(a_i) +
  \frac{\epsilon}{kNM}\Bigr) \\ 
  &= \ \frac{-\chi(S)}{2n} + \frac{1}{n}\Bigl(\sum_i\scl(a_i)\Bigr) +
  \frac{\epsilon}{nMN}.
\end{align*}
Hence $\scl(g) \ \leq \ \frac{-\chi(S)}{2n} + \frac{1}{n}\bigl(\sum_i
\scl(a_i)\bigr)$. 
\end{proof}

\subsection*{Baumslag--Solitar groups}

Before discussing Baumslag--Solitar groups per se, we make a general
observation: 

\begin{lemma}\label{scl-a}
In any group $G$, if $t$ and $a$ are elements satisfying the
Baumslag--Solitar relation $t a^m t^{-1} = a^{\ell}$ with $m \not= \ell$
then $\scl(a) = 0$. 
\end{lemma}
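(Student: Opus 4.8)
The plan is to pull a single commutator out of the Baumslag--Solitar relation and then let homogeneity of $\scl$ force the value to $0$. First I would raise the relation $ta^mt^{-1}=a^\ell$ to the $k$th power, for an arbitrary positive integer $k$, to get $ta^{mk}t^{-1}=a^{\ell k}$, and then rewrite this as
\[
  a^{(\ell-m)k} \ = \ ta^{mk}t^{-1}a^{-mk} \ = \ \bigl[t,\,a^{mk}\bigr].
\]
Thus, for every $k\geq 1$, the element $a^{(\ell-m)k}$ is a single commutator in $G$.

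Next I would use the standard upper bound $\scl(c)\leq 1/2$ valid for any single commutator $c$ (realized by the once-punctured torus as an admissible surface, cf.\ \cite{Calegari:scl}), together with homogeneity of stable commutator length, $\scl(g^n)=\abs{n}\,\scl(g)$. Applied to the identity above, these give
\[
  \abs{\ell-m}\,k\,\scl(a) \ = \ \scl\bigl(a^{(\ell-m)k}\bigr) \ = \ \scl\bigl([t,a^{mk}]\bigr) \ \leq \ \frac{1}{2}
\]
for every $k\geq 1$. Since $m\neq\ell$ forces $\abs{\ell-m}\geq 1$, letting $k\to\infty$ yields $\scl(a)=0$. (In particular $\scl(a)$ is finite, which is anyway clear since $a^{\ell-m}\in[G,G]$, so the identity $\scl(g^n)=\abs n\,\scl(g)$ causes no trouble in the formally infinite case.)

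The argument is short and uniform, so I do not expect a genuine obstacle; the two points to be careful about are (i) that one really uses the \emph{stable} bound $\scl(c)\leq 1/2$ for a single commutator, not just $\cl(c)=1$, and (ii) that one genuinely needs to pass to $k$th powers rather than stopping at $k=1$: from $k=1$ alone one only gets $\scl(a)\leq 1/(2\abs{\ell-m})$, and it is the freedom to take $k$ arbitrarily large that collapses this to $0$. For contrast I note an alternative route that splits into cases: if $\abs m\neq\abs\ell$ then conjugacy invariance and homogeneity give $\abs m\,\scl(a)=\scl(a^m)=\scl(a^\ell)=\abs\ell\,\scl(a)$ immediately, while if $m=-\ell$ then $a^m$ is conjugate to its own inverse and hence has vanishing $\scl$; the commutator argument above has the advantage of avoiding this case analysis.
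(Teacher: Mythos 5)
Your proof is correct. You derive, for every $k\geq 1$, that $a^{(\ell-m)k}=[t,a^{mk}]$ is a single commutator, invoke the standard bound $\scl(c)\leq 1/2$ for single commutators together with homogeneity $\scl(g^n)=\abs{n}\,\scl(g)$, and let $k\to\infty$. You also correctly anticipate the finiteness issue: the existence of a commutator expression for $a^{\ell-m}$ shows $\scl(a)<\infty$, so homogeneity applies cleanly. The argument is sound.

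The paper's proof is genuinely different in flavor, and shorter. Rather than working algebraically with commutators and the bound $\scl([x,y])\leq 1/2$, it constructs an \emph{admissible surface directly}: since $a^m$ and $a^\ell$ are conjugate in $G$, there is a singular annulus $S\to X$ whose two oriented boundary components represent $a^m$ and $a^{-\ell}$. This annulus is admissible for $a$, with $\chi(S)=0$ and total degree $n(S)=m-\ell\neq 0$, and so yields $\scl(a)\leq -\chi(S)/2n(S)=0$ in one step, with no limit required. The contrast is instructive: the annulus has genus zero and gives the value $0$ directly, whereas a single commutator is bounded by a once-punctured torus of genus one, giving only $\scl(a)\leq 1/(2\abs{\ell-m})$ from $k=1$, so the passage to $k$th powers in your argument is doing the work that the annulus (two boundary components, $\chi=0$) does for free. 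Your algebraic route avoids having to think about surfaces at all, which is a modest advantage if one has already internalized the commutator upper bound and homogeneity; the paper's geometric route is more in keeping with the surface-theoretic framework used throughout Section 3 and also illustrates the utility of non-positive admissible surfaces (the annulus has one boundary of negative degree).
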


\begin{proof}
For any space $X$ with fundamental group $G$ there is a singular annulus
$S \to X$, whose oriented boundary components represent $a^m$ and
$a^{-\ell}$ respectively (since $a^m$ and $a^{\ell}$ are conjugate in
$G$). This surface can be made admissible with $\chi(S) = 0$ and $n(S) =
m -\ell \not= 0$, so $\scl(a) = 0$. 
\end{proof}

The \emph{Baumslag--Solitar group} $BS(m,\ell)$ is defined by the
presentation
\begin{equation}\label{bs-pres}
 \langle \, a, t \mid t a^m t^{-1} = a^{\ell} \, \rangle.
\end{equation}
The corresponding presentation $2$--complex will be denoted $X_{m,
\ell}$, or simply $X$, in this paper. One thinks of $X$ as being
constructed by attaching both ends of an annulus to a circle, by covering
maps of degrees $m$ and $\ell$ respectively; see Section \ref{sec:turn}. 

Clearly, $BS(1,1)$ is $\Z \times \Z$ and $BS(1,-1)$ is the Klein bottle
group. The cases $BS(m, \pm m)$ are also of special interest. By
constructing a suitable covering space of $X$, one finds that this group
contains a subgroup of index $2m$ isomorphic to $F_{2m-1} \times
\Z$. 
In particular, stable commutator length can be
computed in $BS(m, \pm m)$ and is always rational, using the rationality
theorem for free groups \cite{Calegari:free} and results from
\cite{Calegari:scl} (such as Proposition 2.80) on subgroups of finite
index. 

In this paper we will study stable commutator length in $BS(m, \ell)$
under the standing assumption that $m \not= \ell$. 

\begin{remark}\label{t-exp}
The abelianization of $BS(m, \ell)$ is $\Z \times \Z_{\abs{m-\ell}}$ with
generators $t$ and $a$ respectively. Since we are assuming that $m\not=
\ell$, an element of $BS(m, \ell)$ has finite order in the abelianization
if and only if it has $t$--exponent zero. Thus scl is finite on exactly
these elements. 
\end{remark}

\begin{definition}\label{t-length}
  Given a word $w$ in the letters $a^{\pm 1}$ and $t^{\pm 1}$ we
  denote by $|w|_t$ the \emph{$t$--length} of $w$.  That is, $|w|_t$
  is the number of occurrences of $t$ and $t^{-1}$ in $w$.

  Given an element $g \in BS(m,\ell)$ we denote by $|g|_t$ the
  \emph{$t$--length} of the conjugacy class of $g$.  That is, $|g|_t$
  is the minimum value of $|w|_t$ over all words $w$ that represent a
  conjugate of $g$.
\end{definition}

\begin{remark}\label{cyc-red}
  Any element $g \in BS(m,\ell)$ has a conjugate that can be expressed
  as
  \begin{equation}\label{eq:cyc-red}
    w = t^{\epsilon_1}a^{k_1}t^{\epsilon_2} \cdots t^{\epsilon_n}a^{k_n},
  \end{equation}
  where:
  \begin{itemize}
  \item $\epsilon_i \in \{1,-1\}$ for $i = 1,\ldots,n$,
  \item $m \nmid k_i$ if $\epsilon_i = 1$ and $\epsilon_{i+1} = -1$, 
  \item $\ell \nmid k_i$ if $\epsilon_i = -1$ and $\epsilon_{i+1} = 1$,
    and
  \item $|g|_t = |w|_t = n$.
  \end{itemize}
  The subscripts in the second and third bullet are read modulo $n$.
  We refer to such a representative word of the conjugacy class of $g$ as
  \emph{cyclically reduced}. 

Up to cyclic permutation, the cyclically reduced word representing a
conjugacy class is not unique. 
Two other modifications to the word \eqref{eq:cyc-red} can be made,
resulting in cyclically reduced words representing the same element:
\[ a^i t a^j  \ \leftrightarrow \  a^{i-\ell} t a^{j+m} \ \text{ and } \ a^i
t^{-1} a^j  \ \leftrightarrow \  a^{i+m} t^{-1} a^{j-\ell}. \] 
Collins' Lemma \cite{Collins,Lyndon-Schupp} characterizes precisely when
two cyclically reduced words represent the same conjugacy class. It
implies easily that modulo the two moves above and cyclic
permutation, the expression \eqref{eq:cyc-red} is unique. 
\end{remark}

\section{Surfaces in $X_{m,\ell}$}\label{sec:turn}

\subsection*{Transversality}\label{transversality} 
Transversality will be used to convert a singular admissible surface
$S \to X$ into a more combinatorial object. We will follow the approach
from \cite{BCF}, which treated the case of surfaces mapping into
graphs.

Recall that $X = X_{m, \ell}$ is the presentation $2$--complex for the
presentation \eqref{bs-pres}. We can build $X$ in the following
way. Let $A$ be the annulus $S^1 \times [-1, 1]$, and let $C$ be a space
homeomorphic to the circle. Fix orientations of $S^1$ and $C$ and attach
the boundary circles $S^1 \times \{\pm 1\}$ to $C$ via covering maps of
degrees $m$ and $\ell$ respectively, to form $X$. Note that the natural
map $\phi\co A \to X$ is surjective, and maps the interior of $A$
homeomorphically onto $X - C$. Thus we have an identification of $X-C$
with $S^1 \times (-1,1)$. 

The space $X$ is also a cell complex with $C$ as a
subcomplex. The $1$--skeleton of $X$ may be taken to be $C$ (having one
$0$--cell and one $1$--cell, labeled $a$) along with an additional
$1$--cell labeled $t$, which is a fiber in $A$ whose endpoints are
attached to the $0$--cell of $C$. 

Let $C' = S^1 \times \{0\} \subset X-C$. This is a codimension-one
submanifold. For any compact surface $S$ and continuous map $f \co S \to
X$, we may perturb $f$ by a small homotopy to make it \emph{transverse}
to $C'$. Then, $f^{-1}(C')$ is a properly embedded codimension-one
submanifold $N \subset S$. By a further homotopy, we can arrange that $N$
has an embedded $I$--bundle neighborhood $N \times [-1,1] \subset S$
(with $N = N \times \{0\}$) such that $f^{-1}(X-C) = N \times (-1, 1)$
and 
\[f\vert_{N \times (-1,1)} \co N \times (-1,1) \to S^1 \times (-1,1)\]
is a map of the form $f_0 \times \id$. 

Let $N_b \subset N$ be the union of the components that are intervals
(rather than circles). Let $S_b \subset S$ be the subset $N_b \times
[-1,1]$, each component of which is a \emph{band} $I \times [-1,1]$ with
$(I \times [-1,1]) \cap \partial S = \partial I \times [-1,1]$.  
% Each band connects instances of $t$ and $t^{-1}$ in $\partial S$.
% (see Figure \ref{fig:surface}, where $N_b$ is in purple). 

By a further homotopy of $f$ in a neighborhood of $\partial S$, and using
transversality for the map $S - (N \times (-1,1)) \to C$, we can
arrange that in addition to the structure given so far, there is a
collar neighborhood $S_{\partial} \subset S$ on which $f$ has a simple
description. This map takes $S_{\partial}$ into the $1$--skeleton of $X$
by a retraction onto $\partial S$ followed by the restriction $\partial S
\to X$. Each annulus component of $S_{\partial}$ decomposes into squares
that retract into $\partial S$ and then map to $X$ by the characteristic
maps of $1$--cells. These squares are labeled $a$-- or $t$--squares
depending on the $1$--cell. The $t$--squares are exactly the components
of $S_{\partial} \cap S_b$.  In particular, each band ends in two
$t$-squares, representing one instance each of $t$ and $t^{-1}$ 
along the boundary. See Figure \ref{fig:surface}. 

\begin{figure}[ht]
  \centering
  \labellist
  \small\hair 2pt
  \pinlabel {$\times \, m$} [l] at 249 59
  \pinlabel {$\times \, \ell$} [r] at 343 59
  \pinlabel {$C$} [t] at 297 66
  \pinlabel {$C'$} [t] at 298 16
  \pinlabel {$t^{-1}$} [b] at 58 -5
  \pinlabel {$t$} [b] at 139 -5
  \pinlabel {$a$} [b] at 31 -5
  \pinlabel {$a^{-1}$} [b] at 87 -5
  \pinlabel {$a^{-1}$} [b] at 113 -5
  \pinlabel {$a$} [b] at 167 -5
  \endlabellist
  \includegraphics{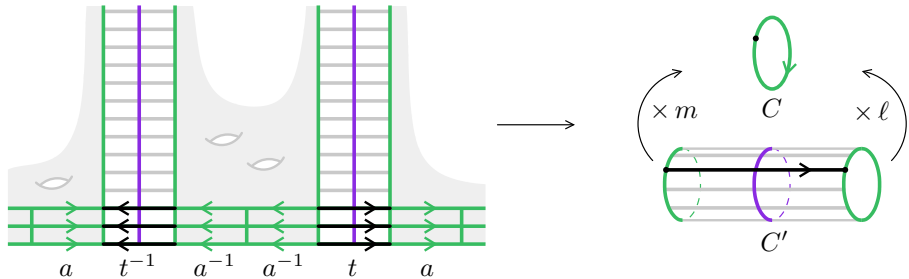}
  \caption{An admissible surface after the transversality procedure. The
    gray regions map into $C$.}\label{fig:surface} 
\end{figure}

Finally, we define $S_0 = S_{\partial} \cup S_b$ and $S_1 = S -
  \interior(S_0)$. Observe that $f$ maps $\partial S_1$ into $C$. 

The boundary $\partial S_0$ decomposes into two subsets: $\partial S$,
called the \emph{outer boundary}, and components in the interior of $S$,
called the \emph{inner boundary}, denoted $\partial^- S_0$. Note that
$\partial^- S_0 = S_0 \cap S_1 = \partial S_1$. In particular, components
of the inner boundary map by $f$ to loops in $X$ representing conjugacy
classes of powers of $a$. 

\begin{remark}\label{boundarymap}
Call a loop $f \co S^1 \to X$ \emph{regular} if $S^1$ can be decomposed into
vertices and edges such that the restriction of $f$ to each edge factors
through the characteristic map of a $1$--cell of $X$. Note that a regular
map is completely described (up to reparametrization) by a cyclic word
in the generators $a^{\pm 1}$, $t^{\pm 1}$ representing the conjugacy
class of $f$ in $\pi_1(X)$. 

If a singular surface $S \to X$ has the property that its restriction to
each boundary component is regular, then the transversality procedure
described above can be performed rel boundary, so that the cyclic
orderings of oriented $a$-- and $t$--squares in $S_{\partial}$ agree with
the cyclic boundary words one started with. 
\end{remark}

Recall that $\scl(g)$ is the infimum of $\frac{-\chi(S)}{2n(S)}$ over
all positive admissible surfaces.  We will show how to compute $\scl(g)$
using the decomposition described above.  

Choose a cyclically reduced word $w$ representing the conjugacy class of
$g$. For any positive admissible surface $S$, each boundary component
maps by a loop representing a positive power of $g$ in $\pi_1(X)$. Modify
$f$ by a homotopy to arrange that its boundary maps are regular, with
corresponding cyclic words equal to positive powers of $w$. 
Then perform the transversality procedure given above, keeping the
boundary map fixed  (cf.\ Remark \ref{boundarymap}). 
At this point, the subsurfaces $S_0$, $S_1$ are defined. Each boundary
component is labeled by a positive power of $w$ and these powers add to
$n(S)$. 

%The boundary is labeled by
%$w^{n(S)}$, possibly spread over more than one component. 

Note that $\chi(S) = \chi(S_0) + \chi(S_1)$ since $S_0$ and $S_1$ meet
along circles.  Also,
\[\chi(S_0) \ = \ \frac{-n(S) \abs{g}_t}{2},\] 
as this is exactly the number of bands in $S_b$, each band connecting
two instances of $t^{\pm 1}$ in $w^{n(S)}$ and contributing $-1$
to $\chi(S_0)$. (Note that $\chi(S_{\partial}) = 0$.) 
Let $\ndisks(S_1)$ denote
the number of disk components in $S_1$.  
We have 
\begin{equation*}
  \chi(S) \ = \ \frac{-n(S)\abs{g}_t}{2} + \chi(S_1) \ \leq \
  \frac{-n(S) \abs{g}_t}{2} + \ndisks(S_1),  
\end{equation*}
and therefore 
\begin{equation*}\label{chi-lowerbound}
  \frac{-\chi(S)}{2n(S)} \ \geq \ \frac{\abs{g}_t}{4} \ - \
  \frac{\ndisks(S_1)}{2 n(S)} .
\end{equation*}
From this, we conclude that
\begin{equation}\label{scl-lowerbound}
  \scl(g) \ \geq \ \frac{\abs{g}_t}{4} \ + \
  \inf_S \frac{-\ndisks(S_1)}{2n(S)},
\end{equation}
where the infimum is taken over all positive admissible surfaces.  
In fact, the reverse of inequality \eqref{scl-lowerbound} holds as well: 

\begin{lemma}\label{scl-g}
  There is an equality 
  \begin{equation*}\label{eq:scl-g}
    \scl(g) \ = \ \frac{\abs{g}_t}{4} \ + \
    \inf_S \frac{-\ndisks(S_1)}{2n(S)}. 
  \end{equation*}
\end{lemma}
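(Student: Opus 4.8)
The plan is to prove the reverse inequality to \eqref{scl-lowerbound}, namely
\[
  \scl(g) \ \leq \ \frac{\abs{g}_t}{4} \ + \
  \inf_S \frac{-\ndisks(S_1)}{2n(S)},
\]
by showing that for any positive admissible surface $S$ there is another positive admissible surface $S'$ with $\frac{-\chi(S')}{2n(S')}$ at most $\frac{\abs{g}_t}{4} - \frac{\ndisks(S_1)}{2n(S)}$, up to an arbitrarily small error. The idea is that the only obstruction to equality in the chain of inequalities above is the presence of non-disk components in $S_1$ and the possibility that $\chi(S_1) < \ndisks(S_1)$; both can be fixed by surgering away the offending pieces of $S_1$ and capping off with controlled surfaces. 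Concretely, I would take $S$ as already put in the transverse position described before the lemma, so that $S = S_0 \cup S_1$ with $S_0$ a union of bands and boundary collars (contributing exactly $\frac{-n(S)\abs{g}_t}{2}$ to Euler characteristic) and $S_1$ mapping its boundary into $C$, i.e.\ to powers of $a$.

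The key steps, in order. First, discard every component of $S_1$ that is not a disk: cut $S$ along the curves $\partial S_1$ bounding such components and remove those components. Each curve removed is a loop in $X$ representing a power of $a$; by Lemma \ref{scl-a} we have $\scl(a) = 0$, so these curves bound (approximately) Euler-characteristic-zero admissible surfaces. Here is where Lemma \ref{fill-with-zero} does the work: the surface $S_0$ together with the retained disk components of $S_1$ is a compact oriented surface (with no sphere or disk components after the removals, modulo a small bookkeeping argument) whose boundary splits into the $\partial_2$-family mapping to powers of $g$ (total degree $n(S)$) and a $\partial_1$-family of curves mapping to powers of $a$; applying the lemma and using $\scl(a)=0$ gives
\[
  \scl(g) \ \leq \ \frac{-\chi(S_0 \cup \{\text{disks}\})}{2n(S)} \ = \
  \frac{1}{2n(S)}\Bigl(\frac{n(S)\abs{g}_t}{2} - \ndisks(S_1)\Bigr)
  \ = \ \frac{\abs{g}_t}{4} - \frac{\ndisks(S_1)}{2n(S)}.
\]
Taking the infimum over all positive admissible $S$ yields the desired upper bound, which combined with \eqref{scl-lowerbound} gives the equality. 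Strictly, since Lemma \ref{fill-with-zero} is an approximation statement, one carries an $\epsilon$ through and lets $\epsilon \to 0$; and since the right-hand infimum is over $S$ but the left side has no $S$, one simply notes that each $S$ furnishes an upper bound for $\scl(g)$, so the infimum does too.

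The main obstacle is the hypothesis management in Lemma \ref{fill-with-zero}: that lemma requires the surface being filled to have no $S^2$ or $D^2$ components and to have \emph{both} boundary families non-empty. After deleting the non-disk components of $S_1$, the piece $S_0 \cup \{\text{disks}\}$ could in principle acquire disk components (e.g.\ a band both of whose ends abut deleted regions, or a boundary collar annulus that becomes a disk), or the $\partial_1$-family could be empty (if $S_1$ had no non-disk components at all, in which case there is nothing to fill and the estimate is immediate). I would handle the empty-$\partial_1$ case separately as a triviality, and argue that any disk component created by the surgery maps to a nullhomotopic loop, hence can itself be capped off for free without affecting the count $\ndisks(S_1)$ — or, more cleanly, absorb such disks into the $\epsilon$-slack by a standard ``doubling/stabilization'' trick so that the genuinely applicable form of Lemma \ref{fill-with-zero} (the chain version, replacing $\sum_i \scl(a_i)$ by $\scl(\sum_i a_i)$) can be invoked with $\scl$ of the relevant $a$-chain equal to zero. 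Once these corner cases are dispatched, the estimate is a direct Euler characteristic count.
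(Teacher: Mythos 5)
Your proof takes essentially the same route as the paper: form $S' = S_0 \cup \{\text{disk components of } S_1\}$, note the free inner boundary curves map to powers of $a$, apply Lemma~\ref{fill-with-zero} with $\scl(a)=0$, and compute $\chi(S')$. One small inaccuracy in your discussion of corner cases: a disk component created by deleting the non-disk parts of $S_1$ would have boundary a component of $\partial S$, mapping to a power of $g$ (not a nullhomotopic loop), but in fact no such disk can arise — every component of $S_0$ meets $\partial S$ and has $\chi\leq 0$, so capping an annular component's inner boundary with a disk of $S_1$ would force the original component of $S$ to be a $D^2$, which admissibility forbids. With that correction, the hypothesis-management concerns you raise are vacuous and the argument is exactly the paper's.
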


\begin{proof}
Given an admissible surface $S \to X$ decomposed as above, let $S'$ be
the union of $S_0$ and the disk components of $S_1$. Recall that the
components of $\partial S'$ in $\partial^- S_0$ map to loops in $X$
representing conjugacy classes of powers of $a$. Thus Lemma
\ref{fill-with-zero} and Lemma \ref{scl-a} imply 
\begin{equation*}
  \scl(g) \ \leq \ \frac{-\chi(S')}{2n(S)} + \frac{1}{n(S)}\sum
  \scl(a^{p_i}) \ = \ \frac{-\chi(S')}{2n(S)}. 
\end{equation*}
Since 
\[
\frac{-\chi(S')}{2n(S)} \ = \ \frac{-\chi(S_0)}{2n(S)} -
\frac{\ndisks(S_1)}{2n(S)} 
\ = \ \frac{\abs{g}_t}{4} - \frac{\ndisks(S_1)}{2n(S)} 
\]
and $S$ was arbitrary, the reverse of inequality \eqref{scl-lowerbound}
holds, as desired. 
\end{proof}

\begin{lemma}\label{extremal-disks-annuli}
  If $S$ is an extremal surface for $g$, then $S_1$ consists only of 
  disks and annuli.
\end{lemma}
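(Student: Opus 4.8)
The plan is to read the lemma off as the equality case of the inequality already assembled in the proof of Lemma~\ref{scl-g}. For any positive admissible surface $S$ decomposed as $S=S_0\cup S_1$, let $S'$ be the union of $S_0$ with the disk components of $S_1$; then Lemma~\ref{fill-with-zero} together with Lemma~\ref{scl-a} gives
\[
  \scl(g)\ \leq\ \frac{-\chi(S')}{2n(S)}\ =\ \frac{\abs{g}_t}{4}-\frac{\ndisks(S_1)}{2n(S)}.
\]
On the other hand, each component of $S_1$ is a compact oriented surface, hence has Euler characteristic at most $1$, with equality exactly for the disk components (a closed component could only have positive Euler characteristic if it were an $S^2$, which is excluded); thus $\chi(S_1)\leq\ndisks(S_1)$. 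Combining this with $\chi(S)=\chi(S_0)+\chi(S_1)$ and $\chi(S_0)=-n(S)\abs{g}_t/2$ yields
\[
  \frac{\abs{g}_t}{4}-\frac{\ndisks(S_1)}{2n(S)}\ \leq\ \frac{-\chi(S)}{2n(S)}.
\]

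The first step is then to observe that if $S$ is extremal, so that $-\chi(S)/2n(S)=\scl(g)$, the two displayed inequalities sandwich $\scl(g)$ between equal quantities and hence are equalities; in particular $\chi(S_1)=\ndisks(S_1)$. Writing $S_1$ as the disjoint union of its components, the disk components contribute $+1$ each, which already accounts for $\ndisks(S_1)$, while every other component contributes at most $0$. Equality therefore forces each non-disk component $Y$ of $S_1$ to satisfy $\chi(Y)=0$, and a compact oriented surface with vanishing Euler characteristic is either an annulus (if $\partial Y\neq\emptyset$) or a torus (if $Y$ is closed).

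It remains only to exclude closed components, and I would do this by a harmless normalization made before running the argument: a closed component of an admissible surface is not an $S^2$, hence has non-positive Euler characteristic, and contributes nothing to $n(S)$, so deleting all closed components of $S$ leaves an admissible surface with the same ratio $-\chi(S)/2n(S)$, still extremal, and still with $\partial S\neq\emptyset$. Since a closed component of $S_1$ would be disjoint from $S_0$ and hence an entire (closed) component of $S$, this normalization guarantees that no component of $S_1$ is closed. The surviving non-disk components of $S_1$ are therefore annuli, which is the claim.

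I do not expect a genuine obstacle here: the statement is essentially the equality case of an inequality already proved, and the only point requiring care is the bookkeeping around closed components of $S_1$ (equivalently of $S$), which is precisely why I would dispose of them by the preliminary reduction rather than in the middle of the argument.
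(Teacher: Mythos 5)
Your proof is correct and follows essentially the same route as the paper's: both arguments read the lemma off as the equality case of the chain of inequalities assembled around Lemma~\ref{scl-g}. The paper splits $S_1$ into the subsurfaces $S_2$ (components with $\chi\geq 0$) and $S_3$ (components with $\chi<0$) and uses extremality to force $S_3=\emptyset$, while you phrase the same arithmetic as ``extremality forces $\chi(S_1)=\ndisks(S_1)$, so every non-disk component has $\chi=0$''; these are the same computation. The one place you are actually more careful than the paper is the treatment of closed components: the paper asserts without comment that $S_2$ ``consists only of disks and annuli,'' which tacitly assumes $S_1$ has no torus components, whereas you dispose of that possibility up front by discarding closed components of $S$ (legitimate, since they have $\chi\leq 0$, contribute nothing to $n(S)$, and their removal preserves extremality). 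That preliminary normalization closes a small gap the paper leaves implicit.
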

\begin{proof}
  Let $S_2$ be the union of the components of $S_1$ that have
  nonnegative Euler characteristic, and let $S_3$ be the union of the
  components of $S_1$ that have negative Euler characteristic.  Then
  $S_2$ consists only of disks and annuli and
  $\chi(S_2)=\ndisks(S_1)$.  If $S$ is extremal, we must have
  \[
  \scl(g) \ =\ \frac{-\chi(S)}{2n(S)} \ = \ 
  \frac{\abs{g}_t}{4}-\frac{\chi(S_2)}{2n(S)}-\frac{\chi(S_3)}{2n(S)} \
  = \ 
  \frac{\abs{g}_t}{4}-\frac{\ndisks(S_1)}{2n(S)}-\frac{\chi(S_3)}{2n(S)}.
  \]
  Comparing with Lemma~\ref{scl-g}, this means $\chi(S_3)\geq0$,
  meaning that $S_3$ must be empty.  Thus $S_1$ consists only of disks
  and annuli.
\end{proof}

\subsection*{The weighted turn graph}

As in \cite{BCF}, we use a graph to keep track of the combinatorics of
the inner boundary $\partial^- S_0$.  

Consider a cyclically reduced word $w$ as in \eqref{eq:cyc-red}.  A
\emph{turn} in $w$ is a subword of the form $a^k$ between two occurrences
of $t^{\pm 1}$ considered as a cyclic word.  The turns are indexed by the
numbers $i = 1,\ldots,n$; the $i^{\rm th}$ turn is labeled by the
subword $t^{\epsilon_i}a^{k_i}t^{\epsilon_{i+1}}$.  A turn labeled
$ta^kt^{-1}$ is of \emph{type m}; a turn labeled $t^{-1}a^kt$ is of
\emph{type $\ell$}; all other turns are of \emph{mixed} type.

The \emph{weighted turn graph} $\Gamma(w)$ is a directed graph with
integer weights assigned to each vertex. The vertices correspond to the
turns of $w$ and the weight associated to the $i^{\rm th}$ turn is
$k_i$. There is a directed edge from turn $i$ to turn $j$ whenever
$-\epsilon_i = \epsilon_{j+1}$. In other words, if the label of a turn begins
with $t^{\pm 1}$, then there is a directed edge from this turn to every
other turn whose label ends with $t^{\mp 1}$.  The vertices of $\Gamma(w)$
are partitioned into four subsets where the presence of a directed
edge between two vertices depends only on which subsets the
vertices lie in.  Figure \ref{fig:turngraph} shows the turn graph in
schematic form. 

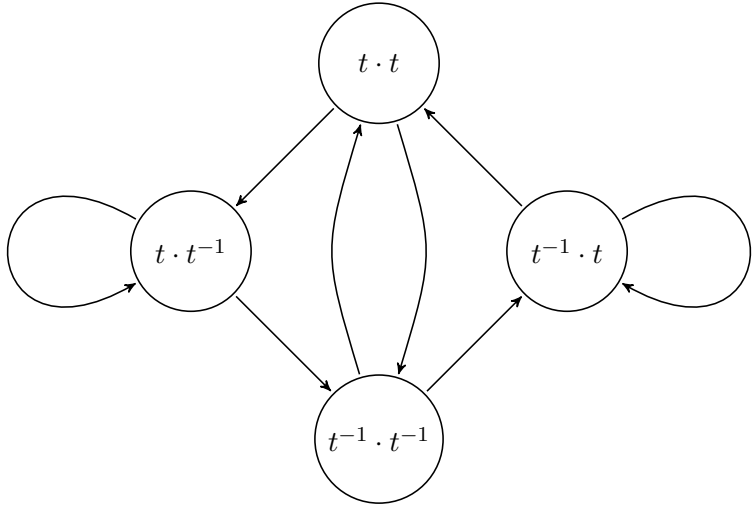
\begin{figure}[ht!]
\begin{tikzpicture}[semithick, >=stealth', shorten <=1pt, shorten >=1pt,scale=2.5]
\node [circle,draw,minimum size=1.6 cm] at (-1,0) (tT) {$t \cdot t^{-1}$};
\node [circle,draw,minimum size=1.6 cm] at (1,0) (Tt) {$t^{-1} \cdot t$};
\node [circle,draw,minimum size=1.6 cm] at (0,-1) (TT) {$t^{-1} \cdot t^{-1}$};
\node [circle,draw,minimum size=1.6 cm] at (0,1) (tt) {$t \cdot t$};
\draw[->] (tT) .. controls (-2.2,0.7) and (-2.2,-0.7) ..  (tT);
\draw[->] (Tt) .. controls (2.2,0.7) and (2.2,-0.7) .. (Tt);
\draw[->] (TT) .. controls (-0.3,0) .. (tt);
\draw[->] (tt) .. controls (0.3,0) .. (TT);
\draw[->] (tt) -- (tT);
\draw[->] (tT) -- (TT);
\draw[->] (TT) -- (Tt);
\draw[->] (Tt) -- (tt);
\end{tikzpicture}
\caption{A schematic picture of the turn graph
    $\Gamma(w)$.}\label{fig:turngraph}
\end{figure}

The edges of the turn graph come in \emph{dual pairs}: if $e \in \Gamma(w)$
is an edge from turn $i$ to turn $j$, then one verifies easily that there
is also an edge $\bar{e}$ from turn $j+1$ to turn $i-1$, and moreover
$\bar{\bar{e}} = e$.  See Figure~\ref{fig:dual-edge}.

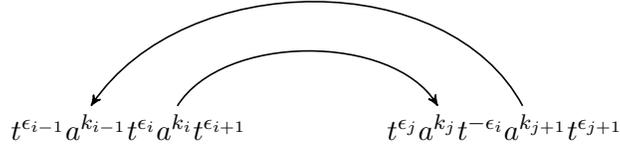
\begin{figure}[ht!]
\begin{tikzpicture}[semithick, >=stealth', shorten <=1pt, shorten >=1pt,scale=2.5]
\node at (-1,0) (i) {$t^{\epsilon_{i-1}}a^{k_{i-1}}t^{\epsilon_{i}}a^{k_{i}}t^{\epsilon_{i+1}}$};
\node at (1,0) (j) {$t^{\epsilon_{j}}a^{k_{j}}t^{-\epsilon_{i}}a^{k_{j+1}}t^{\epsilon_{j+1}}$};
\draw[->] (-0.75,0.1) .. controls (-0.5,0.5) and (0.4,0.5) .. (0.65,0.1); 
\draw[->] (1.1,0.1) .. controls (0.7,0.85) and (-0.7,0.85) .. (-1.21,0.1);
\end{tikzpicture}
\caption{Dual edge pairs in $w$. Whenever there is an edge from turn $i$
  to turn $j$, there must also be an edge from turn $j + 1$ to turn
  $i-1$. The labels of these turns share occurrences of $t^{\pm 1}$ with
  the labels of the original two turns.}\label{fig:dual-edge} 
\end{figure}

A directed circuit in $\Gamma(w)$ is of \emph{type m} or \emph{type
  $\ell$} if every vertex it visits corresponds to a turn of
type $m$ or of type $\ell$, respectively.  Otherwise, the circuit is of
\emph{mixed} type.  The \emph{weight} $\omega(\gamma)$ of a directed 
circuit $\gamma$ is the sum of the weights of the vertices it
visits (counted with multiplicity). Given a directed circuit $\gamma$,
define
\[ \mu(\gamma) \ = \ \left\{
  \begin{array}{ll}
  m & \text{ if $\gamma$ is of type $m$} \\
  \ell & \text{ if $\gamma$ is of type $\ell$} \\
  \gcd(m,\ell) & \text{ otherwise.}
\end{array}\right. \]
A directed circuit $\gamma$ is a \emph{potential disk} if $\omega(\gamma)
\equiv 0 \mod \mu(\gamma)$. 

\subsection*{Turn circuits} 
Let $S \to X$ be a positive admissible surface whose boundary map is
regular and labeled by $w^{n(S)}$. Decomposing $S$ as $S_0 \cup S_1$,
each inner boundary component of $S_0$ can be described as
follows. Traversing the curve in the positively oriented 
direction, one alternately follows the boundary arcs (or \emph{sides}) of
bands in $S_b$ and visits turns of $w$ along $S_{\partial}$; such a
visit consists in traversing the inner edges of some $a$--squares
before proceeding up along another side of a band (cf.\ Figure
\ref{fig:surface}).  If the side of the band leads from turn
$i$ to turn $j$, then $(t^{\epsilon_i})^{-1} = t^{\epsilon_j}$ and
therefore there is an edge in $\Gamma(w)$ from turn $i$ to turn $j$.
In this way, $\partial^- S_0$ gives rise to a finite collection (possibly
with repetitions) of directed circuits in $\Gamma(w)$, called the
\emph{turn circuits} for $S_0$.

Since $\partial S$ is labeled by $w^{n(S)}$, there are $n(S)$
occurrences of each turn on $\partial S$. The turn circuits do not
contain the information of which particular instances of turns are
joined bands, nor do they record how many times the band corresponding
to a given edge in the circuit wraps around the annulus $X - C$. 

\begin{remark}\label{turngraph}
  Given two cyclically reduced words $w,w'$ representing the same
  conjugacy class in $BS(m,\ell)$, there is an isomorphism $\Gamma(w)
  \to \Gamma(w')$ of the underlying directed graph structure that
  respects vertex type and edge duality but not necessarily the vertex
  weights.  However, a directed circuit is a potential disk with one
  sets of weights if and only if it is a potential disk with the other
  set.  The difference in weights of a type $m$ vertex is a multiple
  of $m$, the difference in weights of a type $\ell$ vertex is a
  multiple of $\ell$, and the difference in weights of mixed type
  vertex is a multiple of $\gcd(m,\ell)$.  See Remark~\ref{cyc-red}. 

  In what follows, only the property of being a potential disk is used
  and therefore this ambiguity in the weighed turn graph associated to
  a conjugacy class is not an issue. 
\end{remark}

\begin{lemma}\label{potential-disk}
  Suppose $\gamma$ is a turn circuit for $S_0$ that corresponds to an
  inner boundary component in $\partial^- S_0$ that bounds a disk in
  $S_1$.  Then $\gamma$ is a potential disk.
\end{lemma}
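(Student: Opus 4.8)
The plan is to trace through the geometry of a disk component $D$ of $S_1$ whose boundary circle realizes the turn circuit $\gamma$, and read off a constraint on the winding data that is exactly the condition $\omega(\gamma)\equiv 0\bmod\mu(\gamma)$. First I would recall that $D$ maps into $X$ with $\partial D$ mapping into $C$, so $D\to X$ represents the trivial element of $\pi_1(X)$; in particular the boundary word traced by $\partial D$ is trivial in $BS(m,\ell)$. This boundary word is built by alternately reading off the $a$--power contributed by each turn visited by $\gamma$ (contributing $a^{k_i}$, up to the modifications of Remark~\ref{cyc-red}, which by Remark~\ref{turngraph} do not affect the potential-disk condition) and then crossing a band of $S_b$, which contributes a conjugating $t^{\pm 1}$ together with some unknown power of $a$ coming from how many times the band wraps around the annulus $X-C$. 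So the boundary word of $D$ has the schematic form of a product of terms $a^{k_i}$ interleaved with $t^{\pm1}a^{(\text{wrapping})}$ factors, and this product must equal the identity in $BS(m,\ell)$.

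Next I would push this relation into the Bass--Serre tree, or equivalently analyze it using normal forms / Britton's Lemma for the HNN extension $BS(m,\ell)=\langle a\rangle\ast_{\langle a^m\rangle=\langle a^\ell\rangle}$. Because $\partial D$ bounds a disk, the cyclic word it traces must reduce to the trivial word: each $t$ is cancelled against a $t^{-1}$, and every time a pinch $t\,a^N\,t^{-1}$ (or $t^{-1}a^N t$) is removed, Britton's Lemma forces $m\mid N$ (respectively $\ell\mid N$), and the pinch is replaced by $a^{N\ell/m}$ (resp.\ $a^{Nm/\ell}$). Iterating this reduction, the total exponent of $a$ that survives — which must be $0$ since the whole word is trivial — is an integer combination of the turn weights $k_i$ and the band-wrapping numbers, where the coefficients are products of the ratios $\ell/m$ or $m/\ell$ accumulated along the way. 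The arithmetic of these coefficients is governed precisely by $\mu(\gamma)$: for a type $m$ circuit every reduction divides by $m$, so the relevant modulus is $m$; for type $\ell$ it is $\ell$; and for a mixed circuit the common denominator that appears after clearing all the fractions is $\gcd(m,\ell)$. Clearing denominators in the equation "$a$--exponent $=0$'' yields a congruence $\sum_i k_i\equiv 0\bmod\mu(\gamma)$, i.e.\ $\omega(\gamma)\equiv 0\bmod\mu(\gamma)$, which is exactly the assertion that $\gamma$ is a potential disk.

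I expect the main obstacle to be the bookkeeping in the mixed-type case: when the circuit alternates between $t$-- and $t^{-1}$--pinches, the reduction does not proceed by a single uniform rescaling, and one must argue carefully that after performing all the Britton cancellations in some valid order the surviving $a$--exponent, once denominators are cleared, is an integer combination of the $k_i$ with the leading coefficient coprime to $\gcd(m,\ell)$ — so that triviality of the word genuinely forces $\omega(\gamma)\equiv 0\bmod\gcd(m,\ell)$ rather than something weaker. A clean way to handle this uniformly is to work in the tree: lift the disk to a map whose boundary traces a loop in $T$, use that a disk gives a loop bounding in the tree (hence the combinatorial edge-path is reducible), and track the displacement along the single edge-orbit representing $\langle a^m\rangle=\langle a^\ell\rangle$, where the two edge-stabilizer inclusions have indices $|m|$ and $|\ell|$; the index data is what produces $\mu(\gamma)$. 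The other, more routine, point to get right is the reduction via Remark~\ref{turngraph} that we may compute the weight using the specific word $w$ we started with, since only the mod-$\mu(\gamma)$ class of $\omega(\gamma)$ matters.
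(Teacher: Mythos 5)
Your proposal rests on a misdescription of the curve $\partial D$, and the paper's actual proof is considerably more elementary than the argument you outline. The disk $D$ is a component of $S_1$, so $\partial D$ is a component of $\partial^- S_0 = \partial S_1$, and this curve maps entirely into the circle $C$; it runs along inner edges of $a$--squares and along the long sides $N_b\times\{\pm 1\}$ of bands, but it never meets the $t$--squares, which sit at the two ends of each band adjacent only to the outer boundary $\partial S$. Consequently there is no cyclic word in $a^{\pm1},t^{\pm1}$ traced by $\partial D$ on which Britton's Lemma could operate --- the loop $\partial D$ represents a power of $a$, namely the degree of $\partial D\to C$, and since $a$ has infinite order in $BS(m,\ell)$, the only information extracted from "$D$ is a disk'' is that this degree is zero. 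Neither normal forms nor the Bass--Serre tree is needed, and the "conjugating $t^{\pm1}$ factors'' you insert do not exist in the loop.

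What the paper does is a direct degree count. Traversing $\partial D$, the turn visits contribute $\sum k_i = \omega(\gamma)$ to the degree, and each band side contributes a degree that is a multiple of $m$ or of $\ell$: the map on $N\times(-1,1)$ has the product form $f_0\times\id$, and the two boundary circles $S^1\times\{\pm1\}$ of the annulus are attached to $C$ by covering maps of degrees $m$ and $\ell$, so if the band core winds $d$ times around $C'$, its two sides wind $dm$ and $d\ell$ times around $C$. The total degree is $\omega(\gamma)+dm+d'\ell$ for integers $d,d'$, with $d'=0$ for a type $m$ circuit and $d=0$ for a type $\ell$ circuit. Setting this equal to zero gives $\omega(\gamma)\equiv0\pmod{\mu(\gamma)}$ at once, uniformly in all three cases; the "leading coefficient'' and denominator-clearing subtleties you anticipate in the mixed case never arise because the relation is already an integer linear equation, not a Britton reduction.
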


\begin{proof}
For any band in $S_b$, the core arc (a component of $N_b$) maps to $C'$
as a loop of some degree $d$. The two sides then map to $C$ as loops of
degrees $dm$ and $d\ell$ respectively. 

  If $\iota$ is the side of a band that leads from a turn
  labeled $ta^kt^*$ to a turn labeled $t^*a^{k'}t^{-1}$ then the map
  $\iota \to C$ has degree a multiple of $m$. Likewise if
  $\iota$ leads from a turn labeled $t^{-1}a^kt^*$ to a turn labeled
  $t^*a^{k'}t$ then $\iota$ maps to $C$ with degree a multiple of
  $\ell$. 

  Therefore the total degree of an inner boundary component
  corresponding to a turn circuit $\gamma$ is $\omega(\gamma) + dm +
  d'\ell$ for some integers $d,d'$.  If $\gamma$ is of type $m$ then
  $d' = 0$.  Likewise, if $\gamma$ is of type $\ell$ then $d = 0$.

  If the boundary component actually bounds a disk in $S_1$ then this
  total degree is $0$. Hence $\omega(\gamma) \equiv 0 \mod
  \mu(\gamma)$ and therefore $\gamma$ is a potential disk.
\end{proof}

\section{Linear optimization}\label{sec:lp}

We would like to convert the optimization problem in Lemma~\ref{scl-g} to a problem of optimizing a certain linear functional on a vector space whose coordinates correspond to possible potential disks, subject to certain linear constraints.  Here the functional would count the number of potential disks, and the constraints would arise from the pairing of edges in the turn graph.  The objective would then be to compute stable commutator length using classical linear programming.

The main difficulty in such an approach is arranging that the optimization takes place over a \emph{finite dimensional} object.  In this section, we show how to convert an admissible surface to a vector in a finite dimensional vector space in such a way that the number of disk components of $S_1$ is less than the value of an appropriate linear functional.  We thus obtain computable, rational lower bounds for the stable commutator length of elements of Baumslag--Solitar groups (Theorem~\ref{scl-lowerbound-lp}).  In Section~\ref{sec:alt}, we will show that these bounds are sharp for a certain class of elements. 

We construct the finite dimensional vector space as follows.  Let $w$ be a conjugate of $g$ of the form given in Remark~\ref{cyc-red}.   Let $M=\max\{\abs{m},\abs{\ell}\}$.  We consider two sets of directed
circuits in $\Gamma(w)$:
\begin{itemize}
\item $\frakX$: the set of potential disks that are a sum of not more
  than $M$ embedded circuits, and
\item $\frakY$: the set of all embedded circuits.
\end{itemize}
Note that both $\frakX$ and $\frakY$ are finite sets and that they
may have some circuits in common.  Enumerate these sets as $\frakX = \{
\alpha_1,\ldots,\alpha_p \}$ and $\frakY = \{\beta_1, \ldots, \beta_q\}$.
Let $\X$ be a $p$--dimensional real vector space with basis
$\{\bx_1,\ldots,\bx_p\}$, and let $\Y$ be a $q$--dimensional real vector space
with basis $\{\by_1,\ldots,\by_q\}$.  Equip both $\X$ and $\Y$ with an
inner product that makes the respective bases orthonormal.  By
Remark~\ref{turngraph}, the vector spaces $\X$ and $\Y$ depend only on
the conjugacy class in $BS(m,\ell)$ represented by $w$.  Abusing notation, we let $\{\bx_1,\ldots,\bx_p,\by_1,\ldots,\by_q\}$ denote the corresponding orthonormal basis of $\X\oplus\Y$.  This is the vector space with which we will work.

The linear functional on the vector space $\X\oplus\Y$ whose values will be compared with the number of disk components of $S_1$ is the functional that is the sum of the coordinates corresponding to $\X$, i.e.\ the functional that takes in $\bu
\in \X \oplus \Y$ and gives out $\abs{\bu}_\X := \sum_{i = 1}^p \bu\cdot\bx_i$.  One thinks of this functional as counting the number of potential disks.

There are additional linear functionals on $\X\oplus\Y$ that count the number of times turn circuits in a given collection visit a specific vertex or edge.  For each vertex $v \in \Gamma(w)$, define $F_v\co \X \oplus \Y \to \R$ by letting $F_v(\bx_i)$ be the number of times $\alpha_i$ visits $v$, letting $F_v(\by_i)$ be the number of times $\beta_i$ visits $v$, and extending by linearity.  For each edge $e \subset \Gamma(w)$, define $F_e\co \X \oplus \Y \to \R$  by letting $F_e(\bx_i)$ be the number of times $\alpha_i$ traverses $e$, letting $F_v(\by_i)$ be the number of times $\beta_i$ traverses $e$, and extending by linearity. 

One thinks of the next lemma as saying that, if a collection of turn circuits traverses each edge the same number of times as its dual edge, then this collection of turn circuits visits each vertex the same number of times.

\begin{lemma}\label{eqaul-v}
  If $F_e(\bu) = F_{\bar{e}}(\bu)$ for each dual edge pair
$e,\bar{e}$ of $\Gamma(w)$, then  $F_v(\bu) = F_{v'}(\bu)$ for any vertices $v,v' \in \Gamma(w)$.
\end{lemma}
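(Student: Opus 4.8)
The plan is to exploit the structure of the turn graph $\Gamma(w)$ as described in Figure~\ref{fig:turngraph}: its vertices fall into four classes (the $t\cdot t$, $t\cdot t^{-1}$, $t^{-1}\cdot t$, and $t^{-1}\cdot t^{-1}$ turns), and whether there is an edge from one vertex to another depends only on these classes. For a circuit in $\Gamma(w)$ (or more generally for $\bu\in\X\oplus\Y$, thinking of it as a nonnegative combination of circuits), the key observation is that \emph{entering} a vertex $v$ along some edge and \emph{leaving} it along another must balance: the number of times the circuit-data passes through $v$ equals the number of incoming edge-traversals at $v$, which also equals the number of outgoing edge-traversals at $v$. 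So first I would record, for each vertex $v$, the identity $F_v(\bu) = \sum_{e\text{ into }v} F_e(\bu) = \sum_{e\text{ out of }v} F_e(\bu)$, valid for any $\bu$ since each $\alpha_i,\beta_i$ is a genuine closed circuit.

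Next I would use the duality involution $e\mapsto\bar e$ on edges. By the description in the text, if $e$ runs from turn $i$ to turn $j$ then $\bar e$ runs from turn $j+1$ to turn $i-1$; in terms of the four vertex classes, duality interchanges the "head class" and "tail class" in a controlled way (it swaps a $t\cdot *$ tail with a $*\cdot t^{-1}$ head and so on, reading Figure~\ref{fig:dual-edge}). Under the hypothesis $F_e(\bu)=F_{\bar e}(\bu)$ for every dual pair, the incoming edge-count at the head of $e$ matches the outgoing edge-count somewhere else. Concretely, I would sum the balancing identities over the edges incident to a given vertex class and substitute $F_e(\bu)=F_{\bar e}(\bu)$; the schematic edge-pattern of Figure~\ref{fig:turngraph} then forces the four quantities $\sum_{v\in\text{class }I}F_v(\bu)$, as $I$ ranges over the four classes, to satisfy a small linear system — essentially a flow-conservation argument on the $4$-vertex "quotient" graph — whose only solutions have all four class-sums equal. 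One also needs that within a single class all $F_v(\bu)$ are equal; this follows because the edge pattern is "class-determined," so for $v,v'$ in the same class the sets of edges into $v$ and into $v'$ are related by a duality-symmetric bijection, and the hypothesis equates the corresponding $F_e$ values.

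The main obstacle, and the step I would spend the most care on, is the bookkeeping of how the duality involution acts on the four vertex classes and on the schematic edges of Figure~\ref{fig:turngraph}: one must check that $e\mapsto\bar e$ is compatible with the class structure in exactly the way that makes the $4\times 4$ flow system nondegenerate (so that equality of edge-pairs propagates to equality of \emph{all} vertex counts, not just within certain classes). Once that combinatorial compatibility is pinned down, the rest is a short argument: from the per-vertex in/out balance plus the dual-pair equalities, derive that $F_v(\bu)-F_{v'}(\bu)$ telescopes to $0$ along any path in the (connected) turn graph, hence $F_v(\bu)=F_{v'}(\bu)$ for all $v,v'$. I expect no serious analytic content here — it is entirely a finite linear-algebra / graph-combinatorics verification keyed to the two schematic figures.
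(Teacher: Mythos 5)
Your proposal shares the two correct ingredients with the paper's proof — the per-vertex in/out balance identity $F_v(\bu)=\sum_{e\in E^+(v)}F_e(\bu)=\sum_{e\in E^-(v)}F_e(\bu)$, and the fact that duality must be brought into contact with this identity — but it then diverges in a way that introduces real gaps.

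The key fact you do not isolate is \emph{which} two vertices duality relates. From Figure~\ref{fig:dual-edge}: if $e$ runs from turn $i$ to turn $j$, then $\bar e$ runs from turn $j+1$ to turn $i-1$. Fixing $v=$ turn $i$ and letting $j$ vary, this says precisely that $e\mapsto\bar e$ is a bijection from $E^+(v)$ onto $E^-(v')$ where $v'=$ turn $i-1$. Summing the hypothesis $F_e(\bu)=F_{\bar e}(\bu)$ over $e\in E^+(v)$ and comparing with the in/out balance at $v$ and at $v'$ immediately gives $F_v(\bu)=F_{v'}(\bu)$. Since every vertex is a turn and the turns are cyclically ordered $1,\dots,n$, this propagates to all pairs. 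That is the whole proof.

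Your proposal instead tries a two-stage argument over the four vertex classes. Both stages have problems. First, the ``within a class'' step asserts a duality-symmetric bijection between the edge sets at two \emph{same-class} vertices; but duality pairs outgoing edges at turn $i$ with incoming edges at turn $i-1$, and turns $i$ and $i-1$ are generally in different classes (their $\epsilon$-patterns overlap, they do not coincide). There is a natural class-determined bijection between, say, $E^-(v)$ and $E^-(v')$ for same-class $v,v'$ (both receive edges from exactly the same set of source turns), but this bijection is not the duality involution, so the hypothesis $F_e=F_{\bar e}$ gives you nothing about it. Second, the proposed ``telescoping along a path in the (connected) turn graph'' is the wrong graph to telescope in: the paper's telescoping is along the cyclic adjacency $i\leftrightarrow i-1$ of turns in the word $w$, and two adjacent turns need not be joined by an edge of $\Gamma(w)$ at all. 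So the proposal, as written, does not close; the missing idea is precisely the bijection $E^+(v)\to E^-(v')$ for consecutive turns, which the paper makes the centerpiece of a much shorter argument.
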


\begin{proof}
  For a vertex $v \in \Gamma(w)$, let $E^+(v)$ be the set of
  directed edges that are outgoing from  $v$ and let $E^-(v)$ be the set 
  directed edges that are incoming to $v$.  Then
  \begin{equation*}
    F_v(\bu) = \sum_{e \in E^+(v)} F_e(\bu) = \sum_{e \in E^-(v)}
    F_e(\bu).
  \end{equation*}
  First suppose that $v$ corresponds to turn $i$ and $v'$ corresponds
  to turn $i-1$.  Then edge duality gives a pairing between edges in $E^+(v)$ and edges in
  $E^-(v')$.  Since $F_e(\bu) = F_{\bar{e}}(\bu)$ for every dual edge pair $e,\bar{e}$, we have that
  \begin{equation*}
    F_v(\bu) = \sum_{e \in E^+(v)} F_e(\bu) = \sum_{e \in
      E^+(v)} F_{\bar{e}}(\bu) =\sum_{e \in
      E^-(v')} F_{e}(\bu) = F_{v'}(\bu).
  \end{equation*}
  Letting $i$ vary, we obtain a similar statement for all pairs of vertices corresponding to adjacent turns.  It follows that $F_v(\bu) = F_{v'}(\bu)$ for any vertices $v,v' \in \Gamma(w)$.
\end{proof}

Let $C \subset \X \oplus \Y$ be the cone of non-negative vectors $\bu$
such that $F_e(\bu) = F_{\bar{e}}(\bu)$ for every dual edge pair
$e,\bar{e}$ of $\Gamma(w)$.  
In light of the lemma, we denote by $F \co C \to \R$ the function
$F_v\big|_C$ for any vertex $v \in \Gamma(w)$.  

The following proposition shows how to convert an admissible surface into a vector $\bu\in C$ in such a way that $\abs{\bu}_\X$ is at least the number of disk components of $S_1$.

\begin{proposition}\label{better-vector}
  Given an admissible surface $S \to X$, there is a vector $\bu \in C$
  such that
  \begin{equation}\label{eq:better-vector} 
    \frac{\abs{\bu}_\X}{F(\bu)} \geq \frac{\ndisks(S_1)}{n(S)}. 
  \end{equation}
\end{proposition}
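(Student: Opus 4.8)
The plan is to start with an admissible surface $S \to X$, decompose it as $S_0 \cup S_1$ via the transversality procedure, and extract from the inner boundary $\partial^- S_0$ its collection of turn circuits in $\Gamma(w)$. By Lemma~\ref{potential-disk}, each inner boundary component that bounds a disk in $S_1$ gives a turn circuit that is a potential disk. The naive encoding would be to form the vector $\sum \bx_{\alpha}$ over these disk-bounding potential disks together with $\sum \by_{\beta}$ over the remaining turn circuits (decomposed into embedded circuits lying in $\frakY$), but this fails for two reasons: a turn circuit that is a potential disk need not be a \emph{sum of at most $M$ embedded circuits}, so it need not lie in $\frakX$; and we must land in the cone $C$, i.e.\ the resulting vector must traverse each edge as often as its dual edge.

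The first step is to handle the dual-edge constraint. The key observation is that the full multiset of turn circuits coming from $\partial^- S_0$ \emph{does} satisfy $F_e = F_{\bar e}$ for every dual edge pair: each band in $S_b$ contributes one side realizing an edge $e$ and the opposite side realizing $\bar e$ (this is exactly the dual-edge phenomenon described before Figure~\ref{fig:dual-edge}), so globally each edge and its dual are traversed equally often. Thus if we encode \emph{all} turn circuits — not just the disk-bounding ones — we get a vector in $C$. The circuits that do not bound disks can simply be broken into embedded circuits and recorded using the $\by$ coordinates, which contribute to $F$ but not to $\abs{\param}_\X$; this only helps the inequality \eqref{eq:better-vector} since it enlarges the denominator without touching the numerator. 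So it remains to deal with the disk-bounding potential disks.

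The main obstacle, and the technical heart of the proof, is to replace each disk-bounding turn circuit $\gamma$ (a potential disk, but possibly not in $\frakX$) by a $C$-vector contribution whose $\abs{\param}_\X$-value is at least $1$ and whose edge-traversal counts match those of $\gamma$ (so that the global dual-edge balance is preserved) and whose vertex-visit count is \emph{controlled}. The idea is: decompose $\gamma$ into embedded circuits; if there are at most $M$ of them, $\gamma \in \frakX$ and we record $\bx_\gamma$ directly. If there are more, we group the embedded circuits into bundles, each bundle being a sum of at most $M$ embedded circuits, and we must check each bundle is itself a potential disk. This is where $M = \max\{\abs m, \abs\ell\}$ enters: for a bundle of embedded circuits all of mixed type, the weight is automatically $\equiv 0 \bmod \gcd(m,\ell)$ once... actually the cleaner route is to observe that any single embedded circuit's weight is constrained mod $\mu$ only up to the $dm + d'\ell$ freedom, so by bundling at most $M$ of them and, if necessary, adjusting using the weight-change moves from Remark~\ref{turngraph} (which do not change whether something is a potential disk), one can arrange every bundle to be a potential disk in $\frakX$. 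Each bundle then contributes a basis vector $\bx_i$, and since the bundles partition the embedded circuits of $\gamma$, the total edge-traversal count is unchanged, so the global vector still lies in $C$; meanwhile $\abs{\bu}_\X$ counts the total number of bundles, which is at least the number of disk-bounding turn circuits, hence at least $\ndisks(S_1)$. Finally $F(\bu)$, evaluated at any vertex, equals $n(S) \cdot |g|_t$ divided appropriately — more precisely $F(\bu)$ counts total turn-visits at a fixed vertex over all recorded circuits, which equals the number of visits to that turn along $\partial^- S_0$, namely $n(S)$. Putting these together, $\abs{\bu}_\X \geq \ndisks(S_1)$ and $F(\bu) = n(S)$, which gives \eqref{eq:better-vector}. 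The delicate point to get right is the bundling/weight-adjustment argument ensuring every $\bx$-contribution genuinely lies in $\frakX$ while the sum stays in $C$; everything else is bookkeeping about how bands, sides, and turns match up.
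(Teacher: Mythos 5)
Your overall encoding strategy matches the paper's: decompose $S$ as $S_0\cup S_1$, extract the turn circuits of $\partial^-S_0$, record non-disk circuits using the $\by$-coordinates (harmless to the numerator, helpful to the denominator), observe that $C$-membership follows from band sides coming in dual-edge pairs, and reduce to handling the disk-bounding potential disks. The identification of the hard case ($\gamma$ decomposes into $k>M$ embedded circuits, hence is not in $\frakX$) is also correct.

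However, the step you single out as ``the technical heart of the proof'' has a genuine gap. You propose to partition the $k$ embedded circuits of $\gamma$ into bundles of size at most $M$ and claim each bundle can be ``arranged'' to be a potential disk, ``if necessary, adjusting using the weight-change moves from Remark~\ref{turngraph}.'' This does not work, for two reasons. First, there is no reason an arbitrary sub-bundle of the embedded circuits of a potential disk $\gamma$ should itself be a potential disk; the divisibility of $\omega(\gamma)$ by $\mu(\gamma)$ says nothing about the divisibility of partial sums (worse, if $\gamma$ is of mixed type but a bundle is of pure type $m$, the bundle faces the stricter constraint $\omega\equiv 0 \bmod m$). Second, the weight-change moves of Remark~\ref{turngraph} are not something you get to apply circuit-by-circuit: they correspond to choosing a different cyclically reduced word $w'$ for the conjugacy class, which changes the vertex weights globally and, by the very content of Remark~\ref{turngraph}, does \emph{not} change whether any given circuit is a potential disk. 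So they cannot promote a non-potential-disk bundle to a potential disk.

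The paper's actual device is different and sidesteps the partitioning issue entirely. For each embedded circuit $\beta_{i_j}$ of $\gamma$, it takes $\mu(\beta_{i_j})$ copies of that \emph{single} circuit; this multiple is automatically a potential disk (weight $\mu(\beta_{i_j})\,\omega(\beta_{i_j})\equiv 0 \bmod \mu$) and consists of at most $M$ embedded circuits, hence lies in $\frakX$. Then, to keep the edge- and vertex-traversal counts identical to those of $\gamma$, it records this $\bx$-coordinate with the \emph{fractional} coefficient $\frac{1}{\mu(\beta_{i_j})}$. The resulting contribution to $\abs{\param}_\X$ is $\sum_j \frac{1}{\mu(\beta_{i_j})} \geq k/M \geq 1$, exactly as needed, while $F_e$ and $F_v$ are unchanged from the naive encoding (so the dual-edge balance and $F(\bu)=n(S)$ both go through without further argument). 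Fractional coefficients are what make this work; your integer bundling scheme cannot simultaneously guarantee potential-disk status, size at most $M$, and preservation of the edge counts.
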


\begin{proof}
Suppose the surface $S$ has been decomposed as $S_0\cup S_1$ as described in Section~\ref{sec:turn}, and consider the collection of turn circuits for $S_0$.  Let $\gamma$ be one of the turn circuits in this collection.  As a cycle we can decompose $\gamma$ as a sum of embedded circuits, i.e.\  $\gamma = \beta_{i_1} + \cdots + \beta_{i_k}$.  This decomposition may not be unique, but we only need its existence.  

For each turn circuit $\gamma$, we will construct a corresponding vector $\bu(\gamma)$, depending on this decomposition and on whether the corresponding boundary component of $\partial^- S_0$ bounds a disk in $S_1$.
  If the corresponding inner boundary component of
  $\partial^- S_0$ does not bound a disk in $S_1$, we
  define \[\bu(\gamma) = \sum_{j = 1}^k \by_{i_j}.\] 
Otherwise,
  the corresponding inner boundary component of $\partial^- S_0$ does
  bound a disk in $S_1$, in which case Lemma~\ref{potential-disk} implies that
  $\gamma$ is a potential disk.  If $k \leq M$, then $\gamma  \in\frakX$; say $\gamma=
  \alpha_i$.  In this case we define $\bu(\gamma)
  = \bx_i$.  Otherwise, if $k>M$, we proceed as follows.  For each $\beta_{i_j}$, let
  $\mu(\beta_{i_j})\beta_{i_j}$ denote the sum of $\mu(\beta_{i_j})$ copies of $\beta_{i_j}$.  Notice that $\mu(\beta_{i_j})\beta_{i_j}$ is a potential disk that is not the
  sum of more than $M$ embedded circuits.  Hence
  $\mu(\beta_{i_j})\beta_{i_j}\in\frakX$, so  $\mu(\beta_{i_j})\beta_{i_j}= \alpha_{i'_j}$ for some $i'_j \in
  \{1,\ldots,p\}$.  In this case we define \[\bu(\gamma) = \sum_{j =
    1}^k \frac{1}{\mu(\beta_{i_j})} \bx_{i'_j}.\] 

The vector we will consider is  $\bu
  = \sum_\gamma \bu(\gamma)$, where this sum is taken over all $\gamma$ in the collection of turn circuits for $S_0$ (with multiplicity).  Establishing the following three claims will complete
  the proof of the proposition.
  \begin{enumerate}
  \item $\bu \in C$,\label{u-in-C}
  \item $F(\bu) = n(S)$,\label{F-is-n} and
  \item $|\bu|_\X \geq \ndisks(S_1)$.\label{geq-disk}
  \end{enumerate}

  \eqref{u-in-C}:  The vector $\bu(\gamma)$ was constructed so that $F_e(\bu(\gamma))$ counts the number of times the turn circuit $\gamma$ traverses the edge $e$.  
 Thus $F_e(\bu)$ records the number of times  turn circuits for $S_0$ traverse $e$.  Every time an edge $e$ is traversed by a turn circuit for $S_0$, there is a band in $S_b$ one side of which represents $e$.  The other side of this band represents $\bar{e}$, so therefore we have that $F_e(\bu) = F_{\bar{e}}(\bu)$ for all edges $e$.  Thus $\bu\in C$.

  \eqref{F-is-n}: The vector $\bu(\gamma)$ was also constructed so that $F_v(\bu(\gamma))$ counts the number of times the turn circuit $\gamma$ visits the vertex $v$.  Therefore 
  $F(\bu)$ records the number of times  turn circuits for $S_0$
  visit any given vertex.  As each turn occurs once in $w$, each vertex must be visited exactly
   $n(S)$ times.  Thus $F(\bu)=n(S)$.

  \eqref{geq-disk}: Let $\gamma$ be a turn circuit for $S_0$, and suppose the corresponding inner boundary component of $S_0$ bounds a disk in $S_1$.   Decompose $\gamma$ as a sum 
  $\beta_{i_1} + \cdots +
  \beta_{i_k}$ of embedded circuits as above.   If $k \leq M$, then $\bu(\gamma) =
  \bx_i$ for some $i \in \{1,\ldots,p\}$, and thus
  $\abs{\bu(\gamma)}_\X = 1$.  Otherwise, we have
  \[ \abs{\bu(\gamma)}_\X = \sum_{j = 1}^k \frac{1}{\mu(\beta_{i_j})}
  \geq \sum_{j = 1}^k \frac{1}{M} = \frac{k}{M} \geq 1. \] 
In either case,  $\abs{\bu(\gamma)}_\X \geq 1$.  As
  $\abs{\param}_\X$ is a linear functional, we thus have that $\abs{\bu}_\X \geq
  \ndisks(S_1)$.
\end{proof}

\begin{theorem}\label{scl-lowerbound-lp}
  Let $g \in BS(m,\ell)$, $m \neq \ell$,  be of $t$--exponent
  zero.  Then there is a computable, finite sided, rational polyhedron
  $P \subset \X \oplus \Y$ such that
  \begin{equation}\label{eq:scl-lowerbound-lp}
    \scl(g) \geq \frac{|g|_t}{4} - \frac{1}{2}\max\left\{
      \abs{\bu}_\X \mid \bu \text{ is a vertex of } P \right\}.
  \end{equation}
\end{theorem}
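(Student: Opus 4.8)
The plan is to take for $P$ the normalized slice of the cone $C$ with respect to the functional $F$, namely
\[
  P \ = \ \{\, \bu \in C \ :\ F(\bu) = 1 \,\}
  \ = \ \{\, \bu \in \X \oplus \Y \ :\ \bu \geq 0,\ F_e(\bu) = F_{\bar e}(\bu)\ \text{for every dual pair}\ e,\bar e,\ \text{and}\ F_v(\bu) = 1 \,\},
\]
where $v$ is any fixed vertex of $\Gamma(w)$ (by Lemma~\ref{eqaul-v} the last condition does not depend on the choice of $v$ once the dual-edge equalities hold). This set is cut out by finitely many explicit linear equalities and inequalities with integer coefficients — one nonnegativity constraint per coordinate, one equality per dual edge pair, and the normalization $F_v = 1$ — all of which can be written down algorithmically from a cyclically reduced word $w$ for $g$; hence $P$ is a computable, finite-sided, rational polyhedron. (If $\abs{g}_t = 0$ then $g$ is conjugate to a power of $a$ and $\scl(g) = 0$ by Lemma~\ref{scl-a} together with homogeneity of scl, so the statement holds trivially; from now on assume $\abs{g}_t \geq 1$.)

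First I would verify that $P$ is a nonempty polytope. The point is that $F$ is strictly positive on $C \setminus \{0\}$: a nonzero vector $\bu \in C$ has a positive coordinate, corresponding to an embedded circuit or a potential disk that is a nonempty cycle in $\Gamma(w)$, and any such cycle visits some vertex $v_0$, so $F_{v_0}(\bu) > 0$; Lemma~\ref{eqaul-v} then gives $F(\bu) = F_{v_0}(\bu) > 0$. Consequently the recession cone of $P$ is contained in $C \cap \{F = 0\} = \{0\}$, so $P$ is bounded; being a rational polyhedron, it is a rational polytope with finitely many vertices. For nonemptiness, note that $g$ has $t$--exponent zero, hence finite order in the abelianization of $BS(m,\ell)$, so $g$ bounds an admissible surface $S$; Proposition~\ref{better-vector} applied to $S$ yields $\bu \in C$ with $F(\bu) > 0$ (the left-hand side of \eqref{eq:better-vector} being finite), and then $\bu/F(\bu) \in P$.

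It remains to assemble the bound. Write $V = \max\{\, \abs{\bu}_\X \ :\ \bu\ \text{a vertex of}\ P \,\}$; since $\abs{\param}_\X$ is a linear functional on the compact polytope $P$, this equals $\max\{\, \abs{\bu}_\X : \bu \in P \,\}$. Let $S \to X$ be any positive admissible surface. By Proposition~\ref{better-vector} there is $\bu \in C$ with $\abs{\bu}_\X / F(\bu) \geq \ndisks(S_1)/n(S)$; as $\abs{\param}_\X$ is linear, the rescaled vector $\bu/F(\bu)$ lies in $P$ and satisfies $\abs{\bu/F(\bu)}_\X = \abs{\bu}_\X/F(\bu)$, so
\[
  \frac{\ndisks(S_1)}{n(S)} \ \leq \ \frac{\abs{\bu}_\X}{F(\bu)} \ \leq \ V.
\]
Hence $\dfrac{-\ndisks(S_1)}{2n(S)} \geq -\dfrac{V}{2}$ for every positive admissible surface $S$, so $\inf_S \dfrac{-\ndisks(S_1)}{2n(S)} \geq -\dfrac{V}{2}$, and Lemma~\ref{scl-g} gives
\[
  \scl(g) \ = \ \frac{\abs{g}_t}{4} + \inf_S \frac{-\ndisks(S_1)}{2n(S)} \ \geq \ \frac{\abs{g}_t}{4} - \frac{V}{2},
\]
which is \eqref{eq:scl-lowerbound-lp}.

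I expect the substantive work to lie in the results already in hand rather than in this argument: converting an arbitrary admissible surface into a vector of $C$ without losing too much information is precisely Proposition~\ref{better-vector}, and the present theorem is merely the formal packaging of that estimate as a finite-dimensional linear program. The only genuine subtlety internal to this proof is the boundedness of $P$ — equivalently, the positivity of $F$ on $C \setminus \{0\}$ — since this is what makes the ``$\max$ over vertices'' both well defined and equal to the optimum of the linear program; everything else is bookkeeping plus the observation that a polyhedron defined by rational data is computable and rational.
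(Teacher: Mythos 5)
Your proposal is correct and follows essentially the same approach as the paper: take $P = C \cap F^{-1}(1)$, show it is a rational compact polyhedron, and combine Lemma~\ref{scl-g} with Proposition~\ref{better-vector} by rescaling each $\bu \in C$ into $P$. The only cosmetic difference is in the compactness argument: the paper extends $F$ to the linear functional $\widetilde{F} = \frac{1}{V}\sum_v F_v$ on all of $\X\oplus\Y$, which is strictly positive on the nonnegative orthant minus the origin, whereas you argue directly that $F$ is positive on $C\setminus\{0\}$ and invoke the recession cone; both are fine, and you additionally verify nonemptiness of $P$, a point the paper leaves implicit.
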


\begin{proof}
  Let $P = F^{-1}(1)$.  If $V$ is the number of vertices of $\Gamma(w)$, we can extend $F$ to a linear functional $\widetilde{F}\co \X \oplus \Y \to \R$ by setting
  $\widetilde{F}(\bu) = \frac{1}{V}\sum_v F_v(\bu)$, where the sum is taken 
  over all vertices of $\Gamma(w)$.  The
  linear functional $\widetilde{F}$ is positive on all basis vectors of
  $\X \oplus \Y$, and hence a level set of $\widetilde{F}$ intersects
  the positive cone in a compact set.  Clearly $P = C \cap
  \widetilde{F}^{-1}(1)$. Thus $P$ is a finite sided, rational, compact polyhedron.  

  By Lemma~\ref{scl-g}, Proposition~\ref{better-vector}, and the
  linearity of $\abs{\param}_\X$ and $F$, we have that
  \begin{align*}
    \scl(g) & = \ \frac{\abs{g}_t}{4} \ + \
     \frac{1}{2}\inf_S \frac{-\ndisks(S_1)}{n(S)} \\
&= \ \frac{\abs{g}_t}{4} \ - \
    \frac{1}{2}\sup_S \frac{\ndisks(S_1)}{n(S)} \\
    & \geq \ \frac{\abs{g}_t}{4} \ - \
    \frac{1}{2}\sup_{\bu \in C} \frac{\abs{\bu}_\X}{F(\bu)} \\
    & \geq \ \frac{\abs{g}_t}{4} \ - \ \frac{1}{2}\sup_{\bu \in P}
    \abs{\bu}_\X.
  \end{align*}
  As $P$ is a finite sided, compact polyhedron and $\abs{\param}_\X$
  is a linear functional, the supremum is realized at one of the
  vertices of $P$.  This gives \eqref{eq:scl-lowerbound-lp}.
\end{proof}

\begin{remark}\label{only-X}
  If $\bu$ is a vertex of $P$ that maximizes $\abs{\bu}_\X$ in $P$,
  then $\bu\cdot\by_i = 0$ for all $i \in \{1,\ldots,q\}$.  Indeed,
  suppose not and let $\beta_i \in \frakY$ be such that $\bu\cdot\by_i  = c > 0$.  Then there is some $\alpha_{i'} \in \frakX$ such that
  $\mu(\beta_i)\beta_i = \alpha_{i'}$.  One then observes that $\bu' =
  \bu - c\by_i + \frac{c}{\mu(\beta_i)}\bx_{i'} \in P$ and
  $\abs{\bu'}_\X > \abs{\bu}_\X$.
\end{remark}

The linear programming problem described in this section has been
implemented using \verb=Sage=~\cite{sage} and is available from the
first author's webpage.  The number of embedded circuits in $\Gamma(w)$
is on the order of $\abs{w}_{t}!$ and so the algorithm is only practical
for elements with small $t$--length. 

\section{Elements of alternating $t$--shape}\label{sec:alt}

The bounds given in Theorem~\ref{scl-lowerbound-lp} are not always sharp,
as we will point out in Remark~\ref{strict inequality}.  However, we show
in Theorem~\ref{scl-equality-alternating} that these bounds are sharp for
a class of elements that have what we call \emph{alternating $t$--shape}.
We thus show that stable commutator length is computable and rational for
such elements.  We also characterize which elements of alternating
$t$--shape admit extremal surfaces
(Theorem~\ref{extremal-characterization}).

\begin{definition}
  We say that an element $g\in BS(m,\ell)$ has \emph{alternating
    $t$--shape} if it has a conjugate of the form given in Remark~\ref{cyc-red}
   where $n$ is even and $\epsilon_i =
  (-1)^{i-1}$.
\end{definition}

In this section, we restrict attention to elements of alternating
$t$--shape and express this conjugate as
\begin{equation}\label{eq:alternating-form}
  w=\prod_{k=1}^r ta^{i_k} t^{-1}a^{j_k}. 
\end{equation}
Note that if $g$ has alternating $t$--shape then it has $t$--exponent
zero. Hence stable commutator length is finite for elements of
alternating $t$--shape.

\subsection*{Constructing surfaces}

Let $P$ be as in the proof of Theorem~\ref{scl-lowerbound-lp}, and let
\[ L(g)=\frac{|g|_t}{4} - \frac{1}{2}\max\left\{ \abs{\bu}_\X \mid \bu
  \text{ is a vertex of } P \right\}. \] 
To show that the lower bound $L(g)$ on stable commutator length is sharp, we would like to find a surface $S$ that gives the same upper bound on stable commutator length.  Specifically,
given a vertex $\bu\in P$, we want to construct a corresponding surface $S=S_0\cup S_1$ of the type discussed in Section~\ref{sec:turn}, where $\partial S$ maps to loops representing conjugacy classes of powers of $g$ and $\partial S_1$ maps to loops representing conjugacy classes of powers of $a$.  Such a surface $S_0$ can be built (in fact, many such surfaces can be built); the construction is given in the proof of Theorem~\ref{scl-equality-alternating}.  The difficulty is arranging $S_0$ so that its inner boundary components can be efficiently capped off by $S_1$.

If the degree of each inner boundary component of $S_0$ were zero, we could take each component of $S_1$ to be a disk.  In this case, we would have $\abs{\bu}_\X = \ndisks(S_1) = \chi(S_1)$ and 
\[
\scl(g) \leq \frac{-\chi(S)}{2n(S)} = \frac{\abs{g}_t}{4} -
\frac{\chi(S_1)}{2n(S)} = L(g) \leq \scl(g).
\]
This would mean the bound in Theorem~\ref{scl-lowerbound-lp} is sharp and the surface $S$ is extremal.

It may not be the case that all inner boundary components of $S_0$ can be made to have degree zero.  Nevertheless, when $g$ has alternating $t$--shape, we can control the number of inner boundary components of $S_0$ that have nonzero degree in such a way as to show that there are surfaces $S$ for which $\frac{-\chi(S)}{2n(S)}$ is arbitrarily close to $L(g)$.  The details are given in the proof of Theorem~\ref{scl-equality-alternating}.  In this way, we establish that the lower bound given in Theorem~\ref{scl-lowerbound-lp} is sharp for elements of alternating $t$--shape.

\begin{theorem}\label{scl-equality-alternating}
  Let $g \in BS(m,\ell)$, $m\neq\ell$, have alternating $t$--shape.
  Then \[ \scl(g)=L(g). \] 
\end{theorem}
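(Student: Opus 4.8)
We already know $\scl(g)\ge L(g)$ from Theorem~\ref{scl-lowerbound-lp}, so the plan is to produce, for each $\epsilon>0$, an admissible surface $S$ for $g$ with $\frac{-\chi(S)}{2n(S)}\le L(g)+\epsilon$. I would begin from a vertex $\bu$ of the polytope $P$ realizing the maximum of $\abs{\param}_\X$; by Remark~\ref{only-X} it may be taken to lie in $\X$, so $\bu=\sum_i t_i\bx_i$ with $t_i\ge0$ rational, the index running over the potential disks $\alpha_1,\dots,\alpha_p\in\frakX$. Choosing $N$ with $c_i:=Nt_i\in\Z_{\ge0}$ for all $i$ gives $F(N\bu)=N$ and $\abs{N\bu}_\X=\sum_i c_i$.

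The first step is to reverse the construction of Proposition~\ref{better-vector} and realize $N\bu$ by a surface $S_0$. Prescribe the outer boundary of $S_0$ to carry total degree $N$ (so each turn of $w$ occurs $N$ times along it), and prescribe the inner boundary to be a disjoint union of $c_i$ circles tracing the closed walk $\alpha_i$, for each $i$. Each directed edge $e$ of $\Gamma(w)$ is to be traversed $F_e(N\bu)$ times by these walks, and since $\bu\in C$ this equals $F_{\bar e}(N\bu)$; hence the band-sides realizing $e$ can be matched with those realizing $\bar e$ to form the bands of $S_b$, and since each vertex is visited $F_v(N\bu)=N$ times, the free ends of the bands can be attached consistently to the $N$ copies of the corresponding turn in $S_\partial$. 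This produces $S_0=S_\partial\cup S_b$ with the prescribed boundary and $\chi(S_0)=-N\abs{g}_t/2$, just as in the surface construction of \cite{BCF}.

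The key point is the choice of winding numbers. The core arc of each band may be made to wrap $C'$ with an arbitrary degree $d_b\in\Z$, after which its two sides contribute $d_b$ times $m$ and $d_b$ times $\ell$ (up to the fixed contribution $\omega$ of the $a$--squares) to the total degrees of the inner boundary components they lie on. Because $g$ has alternating $t$--shape, every turn of $w$ is of pure type, so every $\alpha_i$ is of pure type with $\mu(\alpha_i)\in\{m,\ell\}$, and the two sides of each band lie on inner boundary components of \emph{opposite} type. Thus the bands are the edges of a bipartite graph $B$ on the set of type $m$ and type $\ell$ inner boundary components of $S_0$, and requiring a component $\gamma$ to have total degree zero becomes the transportation equation $\sum_{b\ni\gamma}d_b=-\omega(\gamma)/\mu(\gamma)$, whose right-hand side is an integer precisely because $\gamma$ is a potential disk. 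Summing over all components, the only obstruction to solving this integral system is a single balance condition, which in the orientation conventions of Section~\ref{sec:turn} reads $\ell\sum_k i_k=-m\sum_k j_k$; in general one can still choose the $d_b$ so that all but a bounded number of the inner boundary components, independent of $N$, have total degree zero. The exceptional components absorb the leftover winding and map to powers of $a$.

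To finish, cap every degree-zero inner boundary component of $S_0$ with a disk, obtaining a surface $S'$ whose remaining boundary is the outer boundary (total degree $N$) together with at most $c$ curves labelled by powers of $a$, where $c$ is the number of exceptional components. Since $\scl(a^p)=0$ for every $p$ by Lemma~\ref{scl-a}, Lemma~\ref{fill-with-zero} gives
\[
  \scl(g)\ \le\ \frac{-\chi(S')}{2N}\ =\ \frac{\abs{g}_t}{4}-\frac{\abs{N\bu}_\X-c}{2N}\ =\ L(g)+\frac{c}{2N},
\]
and letting $N\to\infty$ yields $\scl(g)\le L(g)$, hence $\scl(g)=L(g)$. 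I expect the main obstacle to be the winding-number argument of the third paragraph, and it is exactly what forces the alternating hypothesis: for a word with mixed turns the bands no longer form a bipartite graph of this shape, a single inner boundary component can receive winding contributions that mix $m$-- and $\ell$--multiples, and the potential-disk condition alone no longer lets one zero out the degrees. (When the balance $\ell\sum_k i_k=-m\sum_k j_k$ holds one can take $c=0$, and the construction produces a genuine extremal surface; this is one direction of Theorem~\ref{extremal-characterization}.)
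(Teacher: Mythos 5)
Your overall strategy matches the paper's: realize $N\bu$ by a band surface $S_0$, pair edges of the turn graph, exploit the bipartite structure of the graph dual to the bands to solve a transportation system for the winding numbers, and cap the degree-zero components. The key lemmas you invoke (Remark~\ref{only-X}, Lemma~\ref{scl-a}, Lemma~\ref{fill-with-zero}) are exactly those the paper uses. But there is a genuine gap at the heart of your third paragraph.

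You assert that ``one can still choose the $d_b$ so that all but a bounded number of the inner boundary components, independent of $N$, have total degree zero.'' As you set things up, each connected component of your bipartite graph $B$ (the paper's $\Delta$) contributes exactly one obstruction: one vertex per component where the degree-zero equation cannot be forced. So the number of exceptional components $c$ equals the number of connected components of $B$. However, nothing in your argument bounds the number of components of $B$ independently of $N$. If, when matching the band-sides realizing $e$ with those realizing $\bar e$, you pair copies ``in parallel'' (the $j$th copy of one polygon with the $j$th copy of another), you may produce $\Theta(N)$ disjoint isomorphic components of $B$, and then $c/2N$ does not tend to zero. The paper handles this explicitly: it observes that whenever two components of $\Delta$ each contain an $e$/$\bar e$ pairing, one can swap the pairings (Figure~\ref{fig:edge-swap}) to merge the two components, and iterating this reduces the number of components of $\Delta$ to at most $K$, the number of edges of the turn graph --- a quantity depending only on $w$, not on $N$. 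Without this (or some equivalent argument that the matching can always be chosen with boundedly many components), your bound $c \le \text{const}$ independent of $N$ is unjustified, and the proof does not close. The rest of your argument --- that the alternating hypothesis makes every turn of pure type, so $B$ is bipartite and the per-band winding ambiguity is a single integer $d_b$, and that the balance on each component is the only obstruction --- is correct and matches the paper's construction.
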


\begin{proof}
We will show that $\scl(g)<L(g)+\epsilon$ for all $\epsilon>0$.
Note that, since $g$ is of alternating $t$--shape, all circuits in the
turn graph are either of type $m$ or of type $\ell$, not of mixed type.
Let $\bu$ be a vertex of $P$ on which $\abs{\param}_\X$ is maximal.
Since $P$ is a rational polyhedron on which all coordinates are
nonnegative, all coordinates of $\bu$ are nonnegative rational numbers.
By Remark~\ref{only-X}, $\bu$ has nonzero entries only in coordinates corresponding to $\X$.
Let $K$ denote the number of edges in the turn graph.  Let $N$ be an
integer such that each coordinate of $N\bu$ is a nonnegative integer and
such that $N>K/2\epsilon$ (so that $K/2N<\epsilon$).

Each coordinate $\bx_i$ of $\X$ represents a directed circuit $\gamma$ in
the turn graph.  For each such directed circuit $\gamma$ of length $n$,
we consider a $2n$--gon with alternate sides labeled by the powers of $a$
corresponding to the vertices of the turn graph through which $\gamma$
passes and alternate sides labeled by the intervening edges of the turn
graph traversed by $\gamma$.  See Figure~\ref{fig:polygon}.

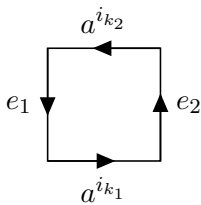
\begin{figure}[ht]
\begin{tikzpicture}[scale=1.5,semithick,>=triangle 45]
\draw (0,0)--(1,0)--(1,1)--(0,1)--cycle;
\draw[->] (0,0)--(0.6,0);
\draw[->] (1,0)--(1,0.6);
\draw[->] (1,1)--(0.4,1);
\draw[->] (0,1)--(0,0.4);
\node[below=2pt] at (0.5,0) {$a^{i_{k_1}}$};
\node[above=2pt] at (0.5,1) {$a^{i_{k_2}}$};
\node[left=2pt] at (0,0.5) {$e_1$};
\node[right=2pt] at (1,0.5) {$e_2$};
\end{tikzpicture}
\caption{A sample polygon corresponding to a circuit $\gamma$ of length
  $2$.}\label{fig:polygon}
\end{figure}

For each $i$ we take $N\bu\cdot\bx_i$ copies of the polygon corresponding
to $\bx_i$, thus obtaining a collection $Q_1,\dotsc,Q_s$ of polygons.
Since $F_e(N\bu)=F_{\bar{e}}(N\bu)$, there exists a pairing of the edges
of these polygons corresponding to edges of the turn graph such that each
edge labeled by $e$ on a polygon $Q_i$ is paired with an edge labeled by
$\bar{e}$ on a polygon $Q_j$. Let $\Delta$ be the graph dual to this
pairing, i.e.\ the graph with a vertex for each polygon $Q_i$ and an edge
between the vertex corresponding to $Q_i$ and the vertex corresponding to
$Q_j$ for each edge of $Q_i$ that is paired with an edge from $Q_j$.  The
graph $\Delta$ may have many components.  However, we can adjust the
pairings of edges of polygons to obtain some control over the number of
components of $\Delta$.  Suppose $Q_{i_1}$ and $Q_{i_2}$ are polygons
where an edge labeled $e$ of $Q_{i_1}$ has been paired with an edge
labeled $\bar{e}$ of $Q_{i_2}$, and suppose $Q_{j_1}$ and $Q_{j_2}$ are
polygons in another component of $\Delta$ where an edge labeled $e$ of
$Q_{j_1}$ has been paired with an edge labeled $\bar{e}$ of $Q_{j_2}$.
Then we can modify the pairing of edges to instead pair the edge labeled
$e$ of $Q_{i_1}$ with the edge labeled $\bar{e}$ of $Q_{j_2}$ and the
edge labeled $e$ of $Q_{j_1}$ with the edge labeled $\bar{e}$ of
$Q_{i_2}$.  The graph $\Delta$ corresponding to this pairing will have
one fewer component than the graph corresponding to the original pairing.  See Figure~\ref{fig:edge-swap}.
Such a modification can be done any time there are two components of
$\Delta$ on which edges with the same labels have been paired.
Therefore, we can arrange that the number of components of $\Delta$ is no
more than $K$, the number of edges in the turn graph.  Note that $\Delta$
is naturally a bipartite graph, with vertices partitioned into those
corresponding to turn circuits of type $m$ (``type $m$ vertices'') and
those corresponding to turn circuits of type $\ell$ (``type $\ell$
vertices'').

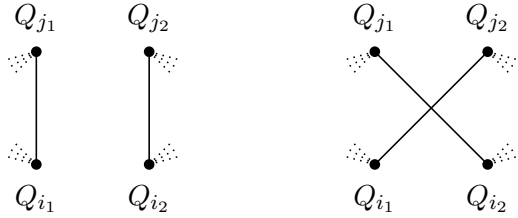
\begin{figure}[ht!]
\begin{tikzpicture}[scale=1.5,semithick,>=triangle 45]
\filldraw (-1,1) circle [radius=0.04cm];
\filldraw (-2,0) circle [radius=0.04cm];
\filldraw (-1,0) circle [radius=0.04cm];
\filldraw (-2,1) circle [radius=0.04cm];
\filldraw (1,0) circle [radius=0.04cm];
\filldraw (1,1) circle [radius=0.04cm];
\filldraw (2,0) circle [radius=0.04cm];
\filldraw (2,1) circle [radius=0.04cm];
\draw[-] (-2,0) -- (-2,1);
\draw[-] (-1,0) -- (-1,1);
\draw[-] (1,0) -- (2,1);
\draw[-] (2,0) -- (1,1);
\node at (-2,-0.3) {$Q_{i_{1}}$};
\node at (-1,-0.3) {$Q_{i_{2}}$};
\node at (1,-0.3) {$Q_{i_{1}}$};
\node at (2,-0.3) {$Q_{i_{2}}$};
\node at (-2,1.3) {$Q_{j_{1}}$};
\node at (-1,1.3) {$Q_{j_{2}}$};
\node at (1,1.3) {$Q_{j_{1}}$};
\node at (2,1.3) {$Q_{j_{2}}$};
\foreach \x in {1,...,3}
{
	\draw [dotted] (-2,0) -- ({-2+0.3*cos(120+15*\x)},{0.3*sin(120+15*\x)});
	\draw [dotted] (-2,1) -- ({-2+0.3*cos(120+15*\x)},{1-0.3*sin(120+15*\x)});
	\draw [dotted] (-1,0) -- ({-1+0.3*cos(15*\x)},{0.3*sin(15*\x)});
	\draw [dotted] (-1,1) -- ({-1+0.3*cos(15*\x)},{1-0.3*sin(15*\x)});
	\draw [dotted] (2,0) -- ({2-0.3*cos(120+15*\x)},{0.3*sin(120+15*\x)});
	\draw [dotted] (2,1) -- ({2-0.3*cos(120+15*\x)},{1-0.3*sin(120+15*\x)});
	\draw [dotted] (1,0) -- ({1-0.3*cos(15*\x)},{0.3*sin(15*\x)});
	\draw [dotted] (1,1) -- ({1-0.3*cos(15*\x)},{1-0.3*sin(15*\x)});
	}
\end{tikzpicture}
\caption{Modifying the edges in $\Delta$ to reduce the number of components.}\label{fig:edge-swap}
\end{figure}

If a vertex $v\in\Delta$ corresponds to a turn circuit $\gamma$, we
define the weight of $v$ to be $\omega(v):=\omega(\gamma)$.  If $v$ is a
type $m$ vertex we have that $m\mid\omega(v)$, and if $v$ is a type
$\ell$ vertex we have that $\ell\mid\omega(v)$.  We wish to assign an
integer $\omega(e)$ to each edge $e\in\Delta$ such that, whenever $v$ is
of type $m$, we have
\begin{equation}\label{eq:type-m-weight}
\omega(v)-m\sum_{e\ni v}\omega(e)=0,
\end{equation}
and, whenever $v$ is of type $\ell$, we have
\begin{equation}\label{eq:type-l-weight}
\omega(v)+\ell\sum_{e\ni v}\omega(e)=0.
\end{equation}
On each connected component of $\Delta$, proceed as follows.  For each
type $m$ vertex $v$, choose a preferred edge $e_v$ emanating from $v$.
For each $v$, set $\omega(e_v)=\omega(v)/m$, and let $\omega(e)=0$ for
all other edges $e$.  This makes \eqref{eq:type-m-weight} hold for all
vertices of type $m$.  Now choose a preferred type $\ell$ vertex $v_0$.
For another type $\ell$ vertex $v_1$, let
\[
\Omega=\frac{\omega(v_1)}{\ell}+\sum_{e\ni v_1}\omega(e).
\]
Choose a path $e_1,\dotsc,e_k$ connecting $v_1$ to $v_0$, and modify the
weights $\omega(e_i)$ by decreasing $\omega(e_i)$ by $\Omega$ whenever
$i$ is odd and increasing $\omega(e_i)$ by $\Omega$ whenever $i$ is even.
For all vertices other than $v_1$ and $v_0$, this does not change the
quantities in \eqref{eq:type-m-weight} and \eqref{eq:type-l-weight}.
Moreover, this causes \eqref{eq:type-l-weight} to now be true for $v_1$.
Fixing $v_0$ and letting $v_1$ vary over all type $\ell$ vertices other
than $v_0$, we obtain edge weights $\omega(e)$ such that
\eqref{eq:type-m-weight} and \eqref{eq:type-l-weight} are true for all
vertices on this component of $\Delta$ except for $v_0$.  Thus we obtain
edge weights $\omega(e)$ such that \eqref{eq:type-m-weight} and
\eqref{eq:type-l-weight} are true for all vertices except for one vertex
in each component of $\Delta$.

We now proceed to build a surface.  Rather than building a surface from
the polygons $Q_i$, we use them to build a band surface $S_0$, then
attempt to fill various components of $\partial S_0$ with disks.  For
each pairing of an edge of $Q_i$ with an edge of $Q_j$, insert a
rectangle with sides labeled by $t$, $a^{m\omega(e)}$, $t^{-1}$, and
$a^{-\ell \omega(e)}$, where $\omega(e)$ is the weight assigned to the
corresponding edge of $\Delta$.  See Figure~\ref{fig:building-surface}.

\begin{figure}[ht]
\begin{tikzpicture}[scale=1.5,semithick,>=triangle 45]
\filldraw[gray!25] (0.5,1.5)--(1,1)--(2,1)--(2,3)--(1.5,3.5)--(3.5,3.5)--(3,3)--(3,1)--(4,1)--(4.5,1.5)--(4.5,0)--(0.5,0)--cycle;
\draw (0,0)--(5,0);
\draw (0,2)--(1,1)--(2,1)--(2,3)--(1,4);
\draw (4,4)--(3,3)--(3,1)--(4,1)--(5,2);
\draw (2,1)--(3,1);
\draw (2,3)--(3,3);
\draw [->] (2,1)--(2.6,1);
\draw [->] (3,1)--(3,2.1);
\draw [->] (3,3)--(2.4,3);
\draw [->] (2,3)--(2,1.9);
\draw [->] (1,0)--(1.6,0);
\draw [->] (2,0)--(2.6,0);
\draw [->] (3,0)--(3.6,0);
\draw [->] (1,1)--(1.6,1);
\draw [->] (3,1)--(3.6,1);
\draw[thin] (1,0.1)--(1,-0.1);
\draw[thin] (2,0.1)--(2,-0.1);
\draw[thin] (3,0.1)--(3,-0.1);
\draw[thin] (4,0.1)--(4,-0.1);
\node at (0.9,2.45) {$Q_i$};
\node at (4.1,2.45) {$Q_j$};
\node[below=2pt] at (1.5,0) {$a^{j_{k-1}}$};
\node[below=4pt] at (2.5,0) {$t$};
\node[below=2pt] at (3.5,0) {$a^{i_k}$};
\node[above=2pt] at (1.5,1) {$a^{j_{k-1}}$};
\node[above=2pt] at (3.5,1) {$a^{i_k}$};
\node[left=2pt] at (2,2) {$a^{-\ell \omega(e)}$};
\node[right=2pt] at (3,2) {$a^{m \omega(e)}$};
\node[below=2pt] at (2.5,1) {$t$};
\node[above=2pt] at (2.5,3) {$t^{-1}$};
\end{tikzpicture}
\caption{Using the polygons $Q_i$ to build a band surface $S_0$.  The
  surface $S_0$ is shaded above.}\label{fig:building-surface}
\end{figure}
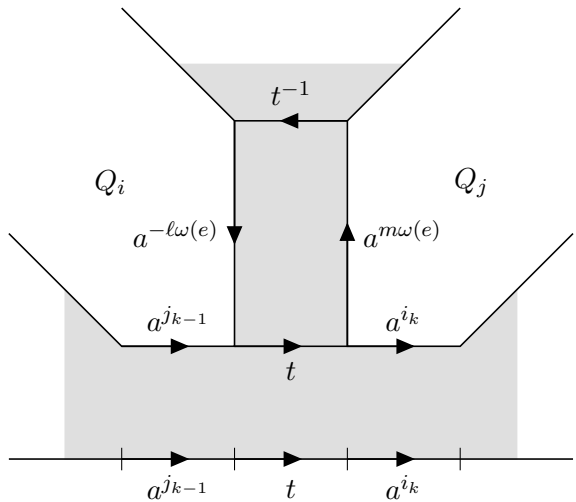

Note that the edges of these rectangles labeled $t$ and $t^{-1}$,
together with the edges of the polygons labeled by powers of $a$, form
paths that would map to the $1$--skeleton of $X$.  By construction, these
paths correspond exactly to powers of $w$.  To each of these paths,
attach an annulus $S^1\times[0,1]$ labeled on both sides by this power of
$w$.  The rectangles and annuli together form $S_0$, shown in
Figure~\ref{fig:building-surface}.  Note that each rectangle maps
naturally to the $2$--cell of $X$ with degree $\omega(e)$.  The annuli
map to the $1$--skeleton of $X$, as indicated by the labels, with the map
factoring through the projection $S^1\times[0,1]\to S^1$.

As in Section~\ref{sec:turn}, we refer to the boundary components of
$S_0$ that map to a power of $w$ as outer boundary components and to
those corresponding to a polygon $Q_i$ as inner boundary components.
Each of the inner boundary components of $S_0$ maps to a power of $a$;
let $d$ be the number of components of the inner boundary for which
this power is zero. The powers of $a$ on the inner boundary components
are exactly the quantities on the left-hand sides of
\eqref{eq:type-m-weight} and \eqref{eq:type-l-weight}. The weights
$\omega(e)$ have been chosen so that these quantities are zero for all
but $K$ components of the inner boundary, so therefore $d\geq s-K$.
Fill these $d$ components of the inner boundary with disks, and call
the resulting surface $S$.

Each inner boundary component of $S$ maps to a power of $a$, and Lemma~\ref{scl-a} says that $\scl(a)=0$.  Therefore, applying Lemma~\ref{fill-with-zero}, we have that

\begin{align*}
  \scl(g)&\leq\frac{-\chi(S)}{2n(S)}\\
  &=\frac{\abs{g}_t}{4}-\frac{d}{2N}\\
  &\leq \frac{\abs{g}_t}{4}-\frac{s-K}{2N}\\
  &= \frac{\abs{g}_t}{4}-\frac{s}{2N}+\frac{K}{2N}\\
  &< \frac{\abs{g}_t}{4}- \frac{1}{2}\max\left\{
    \abs{\bu}_\X \mid \bu \text{ is a vertex of } P
  \right\}+\epsilon\\
&= L(g)+\epsilon.
\end{align*}
Thus $\scl(g)=L(g)$, as desired.
\end{proof}

\begin{corollary}\label{scl-alternating-rational} 
  If $g \in BS(m,\ell)$, $m\neq\ell$, has alternating $t$--shape, then
  $\scl(g)$ is rational. 
\end{corollary}

\begin{proof} 
  Since each vertex $\bu$ of $P$ has rational coordinates and
  $\abs{\bu}_{\X}$ is the sum of certain of these coordinates, we know
  that $L(g)$ is rational. Therefore it follows from
  Theorem~\ref{scl-equality-alternating} that $\scl(g)$ is rational.
\end{proof}

\begin{remark}\label{strict inequality}
  In general one suspects the inequality in
  Theorem~\ref{scl-lowerbound-lp} is strict.  For example, the
  function $\abs{\bu}_\X$ has a unique maximum on the polyhedron $P$
  in Theorem~\ref{scl-lowerbound-lp} for the element
  $a^2t^2at^{-1}at^{-1} \in BS(2,3)$.  When attempting to build a
  surface from this unique optimal vertex $\bu$, it turns out that every
  component of 
  the dual graph $\Delta$ has a constant proportion of vertices that
  cannot be filled, regardless of the edge weights.  %The details are
%  not interesting nor needed so we omit them.

  Therefore, in contrast to Theorem~\ref{scl-equality-alternating},
  where all but at most a constant number of vertices can be filled,
  there is no sequence of surfaces associated to $\bu$ to for
  which $\frac{\ndisks(S)}{n(S)}$ approaches $\abs{\bu}_\X$.  
\end{remark}

\subsection*{An explicit formula} 
When $\abs{g}_t = 2$ the turn graph consists of two vertices, each
adjacent to a one-edge loop. In this case the vector space $\X$ is
essentially two dimensional and the linear optimization problem can be
solved easily by hand, resulting in a formula for stable commutator
length for such elements. 

This calculation is interesting for two
reasons. First, it is rare that one 
can derive a formula for $\scl$ in non-trivial cases. Second, the
minimal value for $\scl$ among all ``well-aligned'' elements (see
Definition~\ref{def:well-aligned} and Theorem~\ref{th:well-aligned})
is realized by an element of this type. 

\begin{proposition}\label{prop:length2}
In the group $BS(m,\ell)$ with $m\not= \ell$, if $m\nmid i$ and $\ell
\nmid j$ then 
\[
\scl(ta^i t^{-1}a^j) \ = \ 
\frac{1}{2} \left( 1 - \frac{\gcd(i,m)}{\abs{m}} -
  \frac{\gcd(j,\ell)}{\abs{\ell}}\right).  
\]
\end{proposition}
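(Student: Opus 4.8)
The plan is to apply Theorem~\ref{scl-equality-alternating}. The element $g = ta^it^{-1}a^j$ is of the form~\eqref{eq:alternating-form} with $r = 1$, hence has alternating $t$--shape, and the hypotheses $m\nmid i$, $\ell\nmid j$ are exactly the conditions of Remark~\ref{cyc-red}, so $w = ta^it^{-1}a^j$ is cyclically reduced and $\abs{g}_t = 2$. Therefore $\scl(g) = L(g) = \frac{1}{2} - \frac{1}{2}\max\{\abs{\bu}_\X \mid \bu \text{ a vertex of } P\}$, and the whole problem reduces to computing $\sup_{\bu\in P}\abs{\bu}_\X$.

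First I would write down $\Gamma(w)$. It has two vertices: $v_1$, of type $m$ with weight $i$, from the turn $ta^it^{-1}$, and $v_2$, of type $\ell$ with weight $j$, from the turn $t^{-1}a^jt$. The edge rule $-\epsilon_i = \epsilon_{j+1}$ yields precisely a directed self-loop $e_1$ at $v_1$ and a directed self-loop $e_2$ at $v_2$; the dual-edge formula gives $\bar e_1 = e_2$, so $\{e_1,e_2\}$ is the one dual pair. Since there is no edge between $v_1$ and $v_2$, every directed circuit is a multiple $a\beta_1$ of the loop $\beta_1$ at $v_1$ (of type $m$, weight $ai$) or a multiple $b\beta_2$ of the loop $\beta_2$ at $v_2$ (of type $\ell$, weight $bj$); thus $\frakY = \{\beta_1,\beta_2\}$. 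Putting $m' = \abs{m}/\gcd(i,m)$ and $\ell' = \abs{\ell}/\gcd(j,\ell)$, one checks $a\beta_1$ is a potential disk iff $m'\mid a$ and $b\beta_2$ is a potential disk iff $\ell'\mid b$; as $m',\ell'\leq M$, the basis vectors corresponding to $m'\beta_1$ and $\ell'\beta_2$ lie in $\frakX$.

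Next I would solve the resulting linear program. By Remark~\ref{only-X} the supremum of $\abs{\param}_\X$ over $P$ is attained on the face where all $\Y$--coordinates vanish, so take $\bu = \sum_a c_a\,\bx_{a\beta_1} + \sum_b d_b\,\bx_{b\beta_2}$ with $c_a, d_b \geq 0$, the sums running over $a$ with $m'\mid a \leq M$ and $b$ with $\ell'\mid b\leq M$. The unique dual-edge constraint is $F_{e_1}(\bu) = F_{e_2}(\bu)$; since $e_1$ is the only edge at $v_1$ and $e_2$ the only edge at $v_2$, on $P$ this common value $F(\bu)$ equals $1$. As $F_{e_1}(\bu) = \sum_a a c_a$ and $F_{e_2}(\bu) = \sum_b b d_b$, the constraints decouple into $\sum_a a c_a = 1$ and $\sum_b b d_b = 1$, whence $\sum_a c_a \leq 1/m'$ and $\sum_b d_b \leq 1/\ell'$, with equality when all weight lies on $a = m'$ and $b = \ell'$. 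Therefore $\max\abs{\bu}_\X = \frac{1}{m'} + \frac{1}{\ell'} = \frac{\gcd(i,m)}{\abs{m}} + \frac{\gcd(j,\ell)}{\abs{\ell}}$, and substituting into $L(g)$ gives the stated value.

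I do not expect a genuine obstacle; the computation is routine once Theorem~\ref{scl-equality-alternating} is in hand. The only delicate point is getting $\Gamma(w)$ right — in particular that the two self-loops are dual to \emph{each other} rather than self-dual, and that no mixed circuits occur — since a wrong dual pairing would alter the linear program. As a sanity check, in $BS(2,3)$ the formula gives $\scl(tat^{-1}a) = \frac{1}{2}\bigl(1 - \frac{1}{2} - \frac{1}{3}\bigr) = \frac{1}{12}$, the minimal positive value relevant to Theorem~\ref{th:well-aligned}.
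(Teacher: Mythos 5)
Your proposal is correct and follows essentially the same route as the paper: reduce to $L(g)$ via Theorem~\ref{scl-equality-alternating}, identify $\Gamma(w)$ as two vertices each carrying a self-loop with the two loops forming a dual pair, observe that every potential disk is a multiple of one of the self-loops, and solve the resulting linear program. The only cosmetic difference is in the final optimization: the paper argues that a maximizing vertex of $P$ must concentrate all weight on the two smallest potential disks $m'\beta_1$ and $\ell'\beta_2$ by an explicit improvement move (replacing $\bx_{i_k}$ by $k\bx_{i_1}$), while you derive the bound $\sum_a c_a \leq 1/m'$ directly from $\sum_a a c_a = 1$ and $a\geq m'$. These are equivalent, and your version is arguably a touch cleaner.
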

The divisibility hypotheses simply mean that the word $ta^i t^{-1}a^j$ is
cyclically reduced (cf.\ Remark \ref{cyc-red}). 

\begin{proof}
The turn graph for the word $ta^it^{-1}a^j$ is as shown in
Figure~\ref{fig:turn-example}.

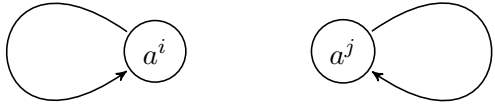
\begin{figure}[ht!]
\begin{tikzpicture}[semithick, >=stealth', shorten <=1pt, shorten >=1pt,scale=2.5]
 \clip (-0.8,-0.28) rectangle (1.8,0.28);  
\node [circle,draw] at (0,0) (left) {$a^i$};
\node [circle,draw] at (1,0) (right) {$a^j$};
\draw[->] (left) .. controls (-1,0.7) and (-1,-0.7) ..  (left);
\draw[->] (right) .. controls (2,0.7) and (2,-0.7) .. (right);
\end{tikzpicture}
\caption{The turn graph for $ta^it^{-1}a^j$.}\label{fig:turn-example}
\end{figure}

There are two types of potential disks:
\begin{enumerate}
\item Circuits of type $m$ that traverse the left loop of the turn graph
  $p$ times, where $m\mid pi$.
\item Circuits of type $\ell$ that traverse the right loop of the turn
  graph $q$ times, where $\ell\mid qj$.
\end{enumerate}
Note that the condition $m\mid pi$ is equivalent to
$\frac{\abs{m}}{\gcd(i,m)}\mid p$, and the condition $\ell\mid qj$ is
equivalent to $\frac{\abs{\ell}}{\gcd(j,\ell)}\mid q$.  Suppose
$p=\frac{k\abs{m}}{\gcd(i,m)}$, where $p\leq\max\{\abs{m},\abs{\ell}\}$,
for some positive integer $k$, and let $\bx_{i_k}$ be the corresponding
basis vector of $\X$.  We claim that, if $k>1$ and $\bu$ is a vertex of
$P$ that maximizes $\abs{\bu}_{\X}$, then $\bu\cdot\bx_{i_k}=0$.  Indeed,
suppose not, and consider the vector
$\bu'=\bu-(\bu\cdot\bx_{i_k})\bx_{i_k}+k(\bu\cdot\bx_{i_k})\bx_{i_1}$.
Then $F_e(\bu')=F_e(\bu)$ for all $e$ and $F(\bu')=F(\bu)=1$, but
\[
\abs{\bu'}_{\X}=\abs{\bu}_{\X}-\bu\cdot\bx_{i_k}+k(\bu\cdot\bx_{i_k})=\abs{\bu}_{\X}+(k-1)(\bu\cdot\bx_{i_k})>\abs{\bu}_{\X}. 
\]
A similar argument applies to coordinates of $\X$ corresponding to
potential disks of type $\ell$.  Thus, if $\bu$ is a vertex of $P$ that
maximizes $\abs{\bu}_{\X}$, only two coordinates of $\bu$ are nonzero,
one corresponding to a potential disk of type $m$ where
$p=\frac{\abs{m}}{\gcd(i,m)}$ and the other corresponding to a potential
disk of type $\ell$ where $q=\frac{\abs{\ell}}{\gcd(j,\ell)}$.

Let $c$ be the value of the coordinate corresponding to this potential
disk of type $m$, and let $d$ be the value of the coordinate
corresponding to this potential disk of type $\ell$.  Then the conditions
$F_e(\bu)=F_{\bar{e}}(\bu)$ and $F(\bu)=1$ become
\[
\frac{\abs{m}}{\gcd(i,m)}c=\frac{\abs{\ell}}{\gcd(j,\ell)}d=1.
\]
Therefore we have that $c=\frac{\gcd(i,m)}{\abs{m}}$ and
$d=\frac{\gcd(j,\ell)}{\abs{\ell}}$.  This means that 
\begin{align*}
\scl(ta^it^{-1}a^j)&=\frac{|ta^it^{-1}a^j|_t}{4} - \frac{1}{2}\max\left\{
      \abs{\bu}_\X \mid \bu \text{ is a vertex of } P \right\}\\
&=\frac12-\frac12\left(\frac{\gcd(i,m)}{\abs{m}}+\frac{\gcd(j,\ell)}{\abs{\ell}}\right), 
\end{align*}
as desired. 
\end{proof}

\subsection*{Extremal surfaces}
We now characterize the elements $g\in BS(m,\ell)$ of alternating
$t$--shape for which an extremal surface exists.
\begin{lemma}\label{extremal-alternating-disks}
  Suppose $S$ is an admissible surface for some $g\in BS(m,\ell)$,
  $m\neq\ell$, of alternating $t$--shape that has been decomposed as
  described in Section~\ref{sec:turn}.  If $S$ is extremal, then $S_1$
  consists only of disks.
\end{lemma}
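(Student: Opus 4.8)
The plan is to push the linear-programming machinery one step further, using the sharpness result Theorem~\ref{scl-equality-alternating} to force the extremal surface $S$ to be ``maximally efficient''. By Lemma~\ref{extremal-disks-annuli} we already know that $S_1$ consists only of disks and annuli, so it suffices to rule out annulus components; equivalently, it suffices to show that \emph{every} turn circuit for $S_0$ bounds a disk in $S_1$, since an annulus component of $S_1$ would have two distinct boundary circles in $\partial^- S_0$, each a turn circuit that does not bound a disk.

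First I would apply Proposition~\ref{better-vector} to $S$, obtaining a vector $\bu\in C$ with $F(\bu)=n(S)>0$ and $\abs{\bu}_\X/F(\bu)\geq\ndisks(S_1)/n(S)$. Next I would pin down the value of $\ndisks(S_1)/n(S)$ exactly. Since $S$ is extremal and, by Lemma~\ref{extremal-disks-annuli}, $S_1$ consists of disks and annuli, we have $\chi(S_1)=\ndisks(S_1)$ and hence
\[
\scl(g)\ =\ \frac{-\chi(S)}{2n(S)}\ =\ \frac{\abs{g}_t}{4}-\frac{\ndisks(S_1)}{2n(S)}.
\]
Comparing this with Theorem~\ref{scl-equality-alternating}, which gives $\scl(g)=L(g)=\frac{\abs{g}_t}{4}-\frac{1}{2}M^{\ast}$ where $M^{\ast}:=\max\{\abs{\bu}_\X\mid\bu\text{ a vertex of }P\}$, I conclude that $\ndisks(S_1)/n(S)=M^{\ast}$.

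Now the heart of the argument: the normalized vector $\bu/F(\bu)$ lies in $P$ (it is a positive rescaling of $\bu\in C$ on which $F$ takes the value $1$), and $\abs{\bu/F(\bu)}_\X=\abs{\bu}_\X/F(\bu)\geq M^{\ast}=\sup_{\bv\in P}\abs{\bv}_\X$. Thus $\bu/F(\bu)$ maximizes the linear functional $\abs{\param}_\X$ on the compact polytope $P$, so it lies on the face $P_{\max}$ of maximizers. That face is the convex hull of those vertices of $P$ at which $\abs{\param}_\X$ is maximal, and Remark~\ref{only-X} says each such vertex has all its $\by$--coordinates equal to $0$; since this property is preserved under convex combinations, every point of $P_{\max}$ has all $\by$--coordinates $0$, and in particular so does $\bu/F(\bu)$, hence so does $\bu$. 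But in the construction $\bu=\sum_\gamma\bu(\gamma)$ in the proof of Proposition~\ref{better-vector}, each turn circuit $\gamma$ that does \emph{not} bound a disk contributes $\bu(\gamma)=\sum_j\by_{i_j}$, a nonzero vector supported on $\frakY$--coordinates, while all the summands $\bu(\gamma)$ have nonnegative coordinates and so cannot cancel. Therefore there are no turn circuits failing to bound a disk; combined with Lemma~\ref{extremal-disks-annuli}, $S_1$ consists only of disks.

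The step I expect to be the main obstacle is getting the chain of (in)equalities exactly right: one must use \emph{both} that $L(g)$ is a lower bound for $\scl(g)$ (Theorem~\ref{scl-lowerbound-lp}) \emph{and} that it is attained (Theorem~\ref{scl-equality-alternating}) in order to conclude that $\ndisks(S_1)/n(S)$ \emph{equals} $M^{\ast}$ rather than merely bounding it, and one must observe that the conclusion of Remark~\ref{only-X}, stated there only for vertices, propagates to the entire maximizing face $P_{\max}$ by convexity. Everything else is routine bookkeeping with the functionals $F_v$, $F_e$, and $\abs{\param}_\X$ and with the dichotomy in the proof of Proposition~\ref{better-vector}.
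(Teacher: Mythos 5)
Your proof is correct and takes essentially the same route as the paper's, using the same ingredients (Lemma~\ref{extremal-disks-annuli}, Proposition~\ref{better-vector}, Theorem~\ref{scl-equality-alternating}, Remark~\ref{only-X}) in essentially the same order. The only difference is organizational: the paper argues by contradiction, noting that an annulus would give $\bu$ a positive $\by$--coordinate and then invoking the strict improvement from Remark~\ref{only-X} to contradict extremality, whereas you give a direct argument by showing extremality pins the normalized vector to the maximizing face of $P$, whose points all have vanishing $\by$--coordinates by Remark~\ref{only-X} together with convexity.
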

\begin{proof}
  If $S$ is extremal, we know by Lemma~\ref{extremal-disks-annuli} that
  $S_1$ consists of only disks and annuli.  Suppose that some component
  of $S_1$ is an annulus.  This means that some component of the inner
  boundary of $S_0$ does not bound a disk in $S$.  Using the construction
  from the proof of Proposition \ref{better-vector}, there is a $\bu \in P$ such
  that $\abs{\bu}_\X \geq \ndisks(S_1)$ and $\bu\cdot\by_i>0$ for some $i$. 
Remark~\ref{only-X} shows how to find
  $\bu'\in P$ such that $\abs{\bu'}_{\X}>\abs{\bu}_{\X}$, so we have
  $\abs{\bu'}_{\X}>\ndisks(S_1)$.  But then
  Theorem~\ref{scl-equality-alternating} shows that $S$ is not extremal.
  Thus $S_1$ cannot have an annular component, meaning it consists only
  of disks.
\end{proof}

\begin{theorem}\label{extremal-characterization} 
  Let $g =\prod_{k=1}^r ta^{i_k} t^{-1}a^{j_k}\in BS(m,\ell)$,
  $m\neq\ell$. There is an extremal surface for $g$ if and only if
  \begin{equation}\label{eq:balance} 
    \ell\sum_{k=1}^r i_k =-m\sum_{k=1}^r j_k. 
  \end{equation} 
\end{theorem}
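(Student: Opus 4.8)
The plan is to prove the two implications separately, using Theorem~\ref{scl-equality-alternating} (so $\scl(g)=L(g)$) and Lemma~\ref{extremal-alternating-disks} (any extremal surface has $S_1$ consisting only of disks) throughout, and exploiting the fact that for an element of alternating $t$--shape the turn graph splits into a ``type $m$'' part, on the $r$ turns $ta^{i_k}t^{-1}$ with weights $i_k$, and a ``type $\ell$'' part, on the $r$ turns $t^{-1}a^{j_k}t$ with weights $j_k$, with edge duality carrying the edges of the former bijectively onto those of the latter.

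For the forward direction I would start from an extremal surface $S$, decomposed as $S_0\cup S_1$. By Lemma~\ref{extremal-alternating-disks} every inner boundary component of $S_0$ bounds a disk in $S_1$, so (as in the proof of Lemma~\ref{potential-disk}, using that $a$ has infinite order in $BS(m,\ell)$) each such component maps to $C$ with total degree $0$. Each inner boundary component is a turn circuit $\gamma$, necessarily of type $m$ or type $\ell$; if $\gamma$ is of type $m$ its total degree equals $\omega(\gamma)$ plus one contribution for each band whose $m$--side lies on $\gamma$, the contribution of a band $b$ being $\pm m\,d(b)$ where $d(b)$ is the degree of the core of $b$ over $C'$. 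The key observation is that the two long sides of a band are anti--parallel arcs of $\partial^- S_0$, so that the $\ell$--side of $b$ contributes $\mp \ell\,d(b)$ (opposite sign) to the type $\ell$ circuit on which it lies. Summing the relation ``total degree $=0$'' over all type $m$ inner components, and noting that each of the $r$ type $m$ turns is visited exactly $n(S)$ times along $\partial^- S_0$ (since $\partial S$ is labelled by powers of $w$ of total degree $n(S)$) while each band has a unique $m$--side lying on a unique type $m$ component, yields $n(S)\sum_k i_k=\pm m\sum_b d(b)$; the analogous sum over type $\ell$ components yields $n(S)\sum_k j_k=\mp \ell\sum_b d(b)$. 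Eliminating $\sum_b d(b)$ (using $m,\ell\neq 0$) gives $\ell\sum_k i_k=-m\sum_k j_k$, which is \eqref{eq:balance}.

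For the reverse direction I would assume \eqref{eq:balance} and produce an extremal surface by sharpening the construction in the proof of Theorem~\ref{scl-equality-alternating}. Take $\bu\in P$ with $\abs{\bu}_\X$ maximal, supported on $\frakX$ by Remark~\ref{only-X}, and $N$ with $N\bu$ integral; this yields polygons $Q_1,\dots,Q_s$ with $s=N\abs{\bu}_\X$. Now choose the pairing of polygon edges so that the dual graph $\Delta$ is \emph{connected} --- this can be arranged because $\bu\in C$ forces the set of turn--graph edges actually used to be closed under duality, so the relation on the polygons in which $Q_i$ and $Q_j$ are related when some edge of $Q_i$ is dual to some edge of $Q_j$ is connected, and, having taken $N$ large, there are enough edge--slots to realise a spanning tree of it. On a connected bipartite graph the only obstruction to solving the edge--weight equations \eqref{eq:type-m-weight}--\eqref{eq:type-l-weight} at \emph{every} vertex is that the left null vector of the unsigned incidence matrix --- $+1$ on the type $m$ vertices, $-1$ on the type $\ell$ vertices --- annihilate the demand vector, i.e.\ $\ell\sum_{v\in V_m}\omega(v)+m\sum_{v\in V_\ell}\omega(v)=0$; and since each type $m$ (resp.\ type $\ell$) turn is visited $N$ times by the polygons, $\sum_{v\in V_m}\omega(v)=N\sum_k i_k$ and $\sum_{v\in V_\ell}\omega(v)=N\sum_k j_k$, so this is precisely \eqref{eq:balance}. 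Hence a rational solution exists; enlarging $N$ to clear denominators gives an integral one, so every inner boundary component of the resulting band surface $S_0$ is a trivial power of $a$. Capping all $s$ of them with disks gives an admissible surface $S$ with $\ndisks(S_1)=s$, and by the Euler characteristic bookkeeping of Section~\ref{sec:turn}, $-\chi(S)/2n(S)=\abs{g}_t/4-\abs{\bu}_\X/2=L(g)=\scl(g)$, so $S$ is extremal.

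I expect the main obstacle to lie in the reverse direction, specifically the combinatorial claim that the edge pairing can be chosen so that $\Delta$ is connected (or, failing that, so that each component of $\Delta$ is individually balanced); once that is in hand the rest is the linear algebra of the unsigned incidence matrix. The forward direction is essentially bookkeeping, its one subtle point being the sign relation between the $m$--side and $\ell$--side contributions of a band, which must be opposite for the argument --- and for the statement --- to come out.
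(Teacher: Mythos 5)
Your forward direction matches the paper's: start from Lemma~\ref{extremal-alternating-disks} to get $S_1$ consisting only of disks, form the dual graph $\Delta$ with vertex weights the $a$--degrees of the turn circuits and edge weights the band degrees, observe the opposite-sign relation at type-$m$ versus type-$\ell$ vertices, sum the degree-zero conditions over each type, and eliminate the common edge-weight sum. The sign subtlety you flag is exactly the $-m$ versus $+\ell$ convention of \eqref{eq:type-m-weight} and \eqref{eq:type-l-weight}, and your bookkeeping is consistent with the paper's.

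The reverse direction, however, contains a gap. You propose to choose the polygon-edge pairing so that $\Delta$ is connected and then solve the edge-weight system globally, with the cokernel condition on a connected bipartite graph being precisely \eqref{eq:balance}. Connectedness of $\Delta$ is the crux, and your justification is inadequate: the fact that $\bu\in C$ forces the used edge set to be closed under duality holds for \emph{any} nonzero vector in $C$ and does not by itself make the dual-adjacency relation connected. Connectedness is in fact true, but the real reason is that $\bu$ is an \emph{extreme point} of $P$: if the support of $\bu$ could be partitioned into two nonempty dual-isolated pieces $A$ and $B$, one checks that $E_A$ and $E_B$ are disjoint and dual-closed, so that both $\bu_A:=\sum_{\alpha\in A}(\bu\cdot\bx_\alpha)\bx_\alpha$ and $\bu_B$ lie in $C$, exhibiting $\bu$ as a proper convex combination of two distinct points of $P$. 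You implicitly take $\bu$ to be a vertex (via Remark~\ref{only-X}) but never invoke extremality in the connectedness step, which is where it is actually needed. The paper avoids the issue entirely by working component-by-component: the outer boundary of each component of $S_0$ is labelled by $w^{N_0}$ for some $N_0$, so $\sum_v \omega(v)=N_0\sum_k i_k$ over the type-$m$ vertices and $N_0\sum_k j_k$ over the type-$\ell$ vertices \emph{within each component} $\Delta_0$; combined with the fact that the construction of Theorem~\ref{scl-equality-alternating} already satisfies \eqref{eq:type-l-weight} at all but one vertex per component, the balance condition \eqref{eq:balance} forces the last equation to hold as well. This is exactly your fallback ("each component individually balanced"), but it is not automatic --- it comes from the boundary-labelling argument, which your proposal does not supply. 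Either route can be made to work, but as written yours has an unproved claim at the decisive step.
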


\begin{proof}
The status of equation \eqref{eq:balance} does not change under
cancellation of $t^{\epsilon} t^{-\epsilon}$ pairs in $g$, nor under
applications of the defining relator in $BS(m,\ell)$; hence we may
assume without loss of generality that $g$ is cyclically reduced. 

  First, suppose $g$ has an
  extremal surface $S$.  Decompose $S$ as described in
  Section~\ref{sec:turn}.  By Lemma~\ref{extremal-alternating-disks},
  $S_1$ consists only of disks.  Let $\Delta$ be the graph that has a
  vertex for each component of $S_1$ and an edge for each band of $S_b$
  that connects the vertices corresponding to the two disks it adjoins.
  There is a weight function on the vertices of $\Delta$, where
  $\omega(v)$ is the total degree of $a$ at all vertices of the circuit
  in the turn graph corresponding to $v$.  There is also a natural weight
  function $w$ on the edges of $\Delta$, where $\omega(e)$ is the signed
  degree of the map from the band corresponding to $e$ to the $2$--cell
  of $X$.  Since all vertices of $\Delta$ bound disks, we know that
  whenever $v$ corresponds to a circuit of type $m$, we have
\[
\omega(v)-m\sum_{e\ni v}\omega(e)=0,
\]
and, whenever $v$ corresponds to a circuit of type $\ell$, we have
\[
\omega(v)+\ell\sum_{e\ni v}\omega(e)=0.
\]
Summing over all vertices of type $m$, we obtain
\begin{equation}\label{eq:type-m-sum}
m\sum_{e\in\Delta}\omega(e)=\sum_{\substack{v\in\Delta\\\text{of type $m$}}}\omega(v)=n(S)\sum_{k=1}^r i_k.
\end{equation}
Summing over all vertices of type $\ell$, we obtain
\begin{equation}\label{eq:type-l-sum}
-\ell\sum_{e\in\Delta}\omega(e)=\sum_{\substack{v\in\Delta\\\text{of type $\ell$}}}\omega(v)=n(S)\sum_{k=1}^r j_k.
\end{equation}
Multiplying \eqref{eq:type-m-sum} by $\ell$ and \eqref{eq:type-l-sum} by
$-m$ and combining gives \eqref{eq:balance}.

Conversely, suppose the element $g$ satisfies \eqref{eq:balance}.  Let $S_0$
and $\Delta$ be as in the proof of
Theorem~\ref{scl-equality-alternating}.  Restrict to one connected
component of $S_0$, and let $\Delta_0$ be the corresponding connected
component of $\Delta$.  Then \eqref{eq:type-m-weight} holds for all
vertices of type $m$ in $\Delta_0$.  Let $N_0$ be the power of $w$
corresponding to the image of the map on the outer boundary of this
component of $S_0$.  Summing over all vertices of type $m$ in $\Delta_0$,
we have that
\begin{equation}\label{eq:type-m-sum0}
m\sum_{e\in\Delta_0}\omega(e)=\sum_{\substack{v\in\Delta_0\\\text{of type $m$}}}\omega(v)=N_0\sum_{k=1}^r i_k
\end{equation}
also holds.  Multiplying \eqref{eq:type-m-sum0} by $\ell$ and combining
with \eqref{eq:balance} shows that
\begin{equation}\label{eq:type-l-sum0}
-\ell\sum_{e\in\Delta_0}\omega(e)=\sum_{\substack{v\in\Delta_0\\\text{of type $\ell$}}}\omega(v)=N_0\sum_{k=1}^r j_k.
\end{equation}
Since the procedure in the proof of
Theorem~\ref{scl-equality-alternating} ensures that
\eqref{eq:type-l-weight} holds for all but one $v\in\Delta_0$ of type
$\ell$, \eqref{eq:type-l-sum0} implies that \eqref{eq:type-l-weight} in
fact holds for all $v\in\Delta_0$.  The same argument applies to each
component of $\Delta$, so hence \eqref{eq:type-m-weight} and
\eqref{eq:type-l-weight} hold for all $v\in\Delta$.  Thus all inner
boundary components of $S_0$ can be filled with disks, meaning the
resulting surface achieves the lower bound on $\scl(g)$ given by linear
programming.  Hence this surface is extremal.
\end{proof}

\begin{remark}
  Corollary~\ref{scl-alternating-rational} and
  Theorem~\ref{extremal-characterization} combine to give many examples
  of elements for which stable commutator length is rational but for
  which no extremal surface exists.  Previous examples of this phenomenon
  were found in free products of abelian groups of higher rank.
  See~\cite{Calegari:sails}.
\end{remark}

\section{Quasimorphisms on groups acting on trees}\label{sec:qm}

We now turn our attention from analyzing $\scl$ for a single element
in $BS(m,\ell)$ to analyzing properties of the $\scl$
spectrum $\scl(BS(m,\ell)) \subset \R$.  Our main theorem about the
spectrum (Theorem~\ref{th:gap}) shows that either $\scl(g) = 0$ or
$\scl(g) \geq 1/12$.  In other words, there is a \emph{gap} in the
spectrum.  The proof has two parts.  In this section we will provide a
general condition (\emph{well-aligned}) for an element $g$ in a group
$G$ acting on a tree that implies $\scl(g) \geq 1/12$
(Theorem~\ref{th:well-aligned}).  This is not quite enough for the Gap
Theorem for Baumslag--Solitar groups.  In Section~\ref{sec:gap} we use
the specific structure of $BS(m,\ell)$ as an HNN extension and show
that if a stronger form of the well-aligned property does not hold, then
$\scl(g) = 0$ (Theorem~\ref{infiniteoverlap} and
Proposition~\ref{sclzero}).

The key to the argument in Theorem~\ref{th:well-aligned} is the
construction of a certain function $f \co BS(m,\ell) \to \R$ for each
hyperbolic element $g \in BS(m,\ell)$, satisfying certain properties
described below, that provides a lower bound on $\scl(g)$.

The material in this section applies to any group $G$.

\subsection*{Quasimorphisms and stable commutator length}

The functions we will construct are homogeneous quasimorphisms.

\begin{definition}\label{def:qm}
  A function $f \co G \to \R$ is called a \emph{quasimorphism} if
  there is a number $D$ such that
\begin{equation}\label{qm}
\abs{f(gh) - f(g) - f(h)} \ \leq \ D
\end{equation}
for all $g, h \in G$. The smallest such $D$ is called the \emph{defect}
of $f$. A quasimorphism $f$ is \emph{homogeneous} if $f(g^n) = n f(g)$
for all $g\in G$, $n \in \Z$. 
\end{definition}

Bavard Duality \cite{Bavard:duality} provides the link between
homogeneous quasimorphisms and stable commutator length.  We only need
one direction of this link.

\begin{proposition}[Bavard Duality, easy direction]\label{bavard}
Given $g\in G$, suppose there is a homogeneous quasimorphism $f$ with
defect at most $D$ such that $f(g) = 1$. Then $\scl(g) \geq 1/2D$. 
\end{proposition}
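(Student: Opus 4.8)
The plan is to derive the inequality from the commutator-length description of stable commutator length, namely $\scl(g) = \lim_{n\to\infty}\cl(g^n)/n$ for $g \in [G,G]$ --- this is the content of Proposition 2.10 of \cite{Calegari:scl}, by which Definition~\ref{def1} agrees with the commutator-length definition of scl. If no power of $g$ lies in $[G,G]$ then $\scl(g) = \infty$ and the claim is vacuous. Otherwise the set $\{n \in \Z : g^n \in [G,G]\}$ is a nontrivial subgroup of $\Z$, and since $\scl(g^N) = N\scl(g)$ and $f(g^N) = Nf(g)$ for $N \geq 1$, it suffices to run the argument with $n$ ranging over this subgroup; in particular we may as well assume $g \in [G,G]$.

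Two standard properties of a homogeneous quasimorphism $f$ of defect $D$ will be used, both quick consequences of \eqref{qm} together with homogeneity: (i) $f$ is invariant under conjugation, and (ii) $\abs{f([x,y])} \leq D$ for all $x,y \in G$. For (i), first note $f(1) = 0$ (since $f(1) = f(1^2) = 2f(1)$), hence $\abs{f(h) + f(h^{-1})} \leq D$; two applications of \eqref{qm} then give $\abs{f(hg^nh^{-1}) - f(g^n)} \leq 3D$ for every $n$, so that $\abs{n\,f(hgh^{-1}) - n\,f(g)} \leq 3D$ for all $n \in \Z$, forcing $f(hgh^{-1}) = f(g)$. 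For (ii), apply \eqref{qm} to the product $(xyx^{-1})(y^{-1})$ and use $f(xyx^{-1}) = f(y)$ (by (i)) and $f(y^{-1}) = -f(y)$ (by homogeneity).

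The main step is then a direct estimate. Fix $n \geq 1$ with $g^n \in [G,G]$ and write $g^n = \prod_{i=1}^k [x_i,y_i]$ with $k = \cl(g^n)$. An induction on $k$ using \eqref{qm} gives $\bigl|\,f(g^n) - \sum_{i=1}^k f([x_i,y_i])\,\bigr| \leq (k-1)D$, and (ii) bounds each summand by $D$, so
\[
  \abs{f(g^n)} \ \leq \ (k-1)D + kD \ = \ (2k-1)D.
\]
Since $f$ is homogeneous and $f(g) = 1$, the left side equals $n$, whence $\cl(g^n) = k \geq \frac{n}{2D} + \frac{1}{2}$. Dividing by $n$ and letting $n \to \infty$ yields $\scl(g) \geq 1/2D$.

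I do not expect a genuine obstacle here: this is the elementary half of Bavard Duality, and the only point needing care is the bookkeeping of how the defect accumulates --- assembling $k$ commutators costs $(2k-1)D$, not $2kD$, and it is exactly this saving that produces the sharp constant $1/2D$ (the extra $\frac{1}{2}$ washing out in the limit). One could instead argue directly with admissible surfaces: for a connected positive admissible surface $S$, the surface relation in $\pi_1(S)$ exhibits the formal sum of the boundary images as a product of commutators, so evaluating $f$ around $\partial S$ and using (i) (which makes $f$ total to $n(S)$ on the boundary) gives $n(S) \leq -D\chi(S)$, hence $\scl(g) = \inf_S \tfrac{-\chi(S)}{2n(S)} \geq 1/2D$; this route needs a minor extra reduction (to surfaces of positive genus, equivalently a single boundary component) to avoid an off-by-one loss on planar pieces, which is why the commutator-length formulation is cleaner.
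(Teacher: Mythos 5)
Your argument is correct and follows essentially the same route as the paper: bound $\abs{f(g^n)}$ linearly in $\cl(g^n)$ using the defect, note $\abs{f(g^n)} = n$ by homogeneity and $f(g)=1$, then divide by $n$ and take the limit. You spell out the intermediate facts (conjugation invariance, $\abs{f([x,y])}\leq D$, the induction across the commutator product) that the paper compresses into "one checks easily," and your bound $(2k-1)D$ is marginally tighter than the paper's $2kD$, though both wash out to the same $1/2D$ in the limit.
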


\begin{proof}
One checks easily using \eqref{qm} that if $g^n$ is a product of $m$
commutators then $\abs{f(g^n)} \leq 2mD$. Hence $1 = \abs{f(g)} \leq 2
\cl(g^n)D/n$, and taking the limit as $n \to \infty$ gives the desired result. 
\end{proof}

Hence to derive a large lower bound for $\scl(g)$ one tries to
construct a homogenous quasimorphism $f\co G \to \R$ such that $f(g) =
1$ and with defect as small as possible.

\subsection*{$G$--trees}
We consider simplicial trees not just as combinatorial objects, but
also as metric spaces with each edge being isometric to an interval of
length one.  A \emph{segment} in a tree $T$ is a subset $\alpha
\subset T$ that is isometric to a closed segment in $\R$. 

Suppose we are given an action of $G$ on a simplicial tree $T$, always
assumed to be without inversions.  Every element $g \in G$ has a
\emph{characteristic subtree} $T_g$, consisting of those points $x \in
T$ where the displacement function $x \mapsto d(x, gx)$ achieves its
minimum. This minimum is denoted $\abs{g}$, and called the
\emph{translation length} of $g$, or simply the \emph{length} of
$g$. If the length is zero then $T_g$ is the set of fixed points of
$g$ and we call $g$ \emph{elliptic}. Otherwise, $T_g$ is a linear
subtree on which $g$ acts by a shift of amplitude $\abs{g}$. In this
case $T_g$ is called the \emph{axis} of $g$ and $g$ is
\emph{hyperbolic}. Note that $T_g$ has a natural orientation, given by
the direction of the shift by $g$. A \emph{fundamental domain} for $g$
is a segment (of length $\abs{g}$) contained in the axis, of the form
$[x,gx]$. We specifically allow $x$ to be a point in the interior of
an edge.

If $k\not=0$ then $g^k$ has the same type (elliptic or hyperbolic) as
$g$. If $g$ is hyperbolic then $T_{g^k} = T_g$ and $\abs{g^k} = \abs{k}
\abs{g}$. Also, $\abs{hgh^{-1}} = \abs{g}$ for all $g, h$. 

\begin{remark}\label{easyway}
  There is an easy way to identify the axis of a hyperbolic element
  $g\in G$. Namely, if $\alpha$ is an oriented segment or edge in $T$,
  then $\alpha$ is on the axis if and only if $\alpha$ and $g\alpha$ are
  \emph{coherently oriented} in $T$, i.e., there is an oriented segment that contains both $\alpha$ and $g\alpha$ as oriented subsegments.  When this occurs, if $x$ is any
  point in $\alpha$, then the segment $[x,gx]$ is a fundamental
  domain for $g$. 
\end{remark}

\begin{definition}
Let $\gamma$ be an oriented segment in $T$. The
\emph{reverse} of $\gamma$ is the same segment with the opposite
orientation, denoted $\overline{\gamma}$. A \emph{copy of
  $\gamma$} is a segment of the form $g \gamma$ for some $g\in G$. 

If $g$ is hyperbolic then the quotient of $T_g$ by the action of
$\langle g \rangle$ is a circuit of length $\abs{g}$. A \emph{copy of
  $\gamma$} in $T_g / \langle g \rangle$ is the image of a copy of
$\gamma$ in $T_g$, provided that $\abs{\gamma} \leq \abs{g}$. (If
$\abs{\gamma} > \abs{g}$ then there are no copies of $\gamma$ in
$T_g/\langle g \rangle$.) We say that two segments \emph{overlap} if
their intersection is a non-trivial segment.
\end{definition}

Let $\gamma$ be an oriented segment in $T$. For an oriented segment
$\alpha$, let $c_{\gamma}(\alpha)$ be the maximal number of
non-overlapping positively oriented copies of $\gamma$ in
$\alpha$. Note that $c_{\gamma}(\overline{\alpha}) =
c_{\overline{\gamma}}(\alpha)$. Also define
\begin{equation*}
f_{\gamma}(\alpha) \ = \ c_{\gamma}(\alpha) -
  c_{\overline{\gamma}}(\alpha).
\end{equation*}

If $g\in G$ is hyperbolic, let $c_{\gamma}(g)$ be the maximal number of
non-overlapping positively oriented copies of $\gamma$ in $T_g/\langle
g \rangle$. If $g$ is elliptic, let $c_{\gamma}(g) = 0$. In either case,
define 
\begin{equation}\label{f-def}
f_{\gamma}(g) \ = \ c_{\gamma}(g) - c_{\overline{\gamma}}(g) 
\end{equation}
and 
\begin{equation}\label{h-def}
 h_{\gamma}(g) \ = \ \lim_{n \to \infty} \frac{f_{\gamma}(g^n)}{n}.
\end{equation}

We will see shortly that $f_{\gamma}$ is a quasimorphism. Therefore, by
\cite[Lemma~2.21]{Calegari:scl}, the limit defining $h_{\gamma}$ exists
and $h_{\gamma}$ is a homogeneous quasimorphism. 

\begin{lemma}\label{junctures}
Let $g\in G$ be hyperbolic and suppose that a fundamental domain for $g$
is expressed as a concatenation of non-overlapping segments $\alpha_1,
\ldots, \alpha_k$, each given the same orientation as $T_g$. Then for any
$\gamma$ there is an estimate 
\[ \sum_i c_{\gamma}(\alpha_i) \ \leq \ c_{\gamma}(g) \ \leq \ k + \sum_i
c_{\gamma}(\alpha_i). \]
\end{lemma}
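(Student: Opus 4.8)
The plan is to estimate $c_\gamma(g)$ by comparing a maximal packing of positively oriented copies of $\gamma$ in the circuit $T_g/\langle g\rangle$ with packings inside the individual pieces $\alpha_i$. The picture to keep in mind is the circuit of length $\abs{g}$ subdivided into arcs that are the images of $\alpha_1,\dots,\alpha_k$; the only difficulty in relating copies of $\gamma$ in the circuit to copies inside the $\alpha_i$ is that a copy of $\gamma$ may straddle one of the $k$ \emph{junctures} between consecutive $\alpha_i$'s (including the juncture where $\alpha_k$ meets the $g$--translate of $\alpha_1$).

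\textbf{Lower bound.} For the inequality $\sum_i c_\gamma(\alpha_i)\le c_\gamma(g)$, I would take, for each $i$, a maximal collection of $c_\gamma(\alpha_i)$ non-overlapping positively oriented copies of $\gamma$ inside $\alpha_i$. Since the $\alpha_i$ are non-overlapping and concatenate to a fundamental domain, these copies, taken over all $i$, are non-overlapping positively oriented copies of $\gamma$ in the fundamental domain, hence project to non-overlapping positively oriented copies of $\gamma$ in $T_g/\langle g\rangle$ (here one uses $\abs{\gamma}\le\abs{\alpha_i}\le\abs{g}$, so nothing wraps around). Thus $c_\gamma(g)$ is at least their total number, $\sum_i c_\gamma(\alpha_i)$.

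\textbf{Upper bound.} For $c_\gamma(g)\le k+\sum_i c_\gamma(\alpha_i)$, I would start from a maximal collection of non-overlapping positively oriented copies of $\gamma$ in $T_g/\langle g\rangle$, realizing $c_\gamma(g)$. Lift this picture to a fundamental domain $[x,gx]$ subdivided as $\alpha_1,\dots,\alpha_k$. Each copy of $\gamma$ in the collection that lies entirely within some single $\alpha_i$ contributes to $c_\gamma(\alpha_i)$; since the copies are non-overlapping, the number of copies contained in $\alpha_i$ is at most $c_\gamma(\alpha_i)$. Every remaining copy must contain at least one of the $k$ junctures in its interior, and because the copies are non-overlapping, each juncture is contained in the interior of at most one copy of $\gamma$. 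Hence the number of ``straddling'' copies is at most $k$, giving $c_\gamma(g)\le k+\sum_i c_\gamma(\alpha_i)$.

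The only point requiring care — and the step I'd expect to be the main obstacle to write cleanly — is the bookkeeping when lifting the circuit picture to the line: a copy of $\gamma$ in $T_g/\langle g\rangle$ lifts to a copy in $T_g$, but one must choose the lift (and the fundamental domain's position) so that each copy either sits inside one $\alpha_i$ or straddles exactly the junctures it crosses, without double-counting the ``seam'' juncture $gx=g\cdot(\text{start of }\alpha_1)$. Choosing the basepoint $x$ generically (away from all endpoints of the copies of $\gamma$) handles this, after which the counting argument above goes through verbatim. The same two inequalities then hold with $\gamma$ replaced by $\overline{\gamma}$, but the statement as given only concerns $c_\gamma$, so no further argument is needed.
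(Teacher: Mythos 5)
Your proof is correct and follows essentially the same two-step argument as the paper: project the in-segment collections for the lower bound, and bound the number of juncture-straddling copies by $k$ for the upper bound. The extra remark about choosing the basepoint generically is harmless but not needed; the paper treats the lift/projection step as immediate.
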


In the situation of the lemma, we will refer to the images in
$T_g/\langle g \rangle$ of the endpoints of the segments $\alpha_i$ as
\emph{junctures}. There are $k$ junctures in $T_g/\langle g \rangle$.  

\begin{proof}
Start with maximal collections of non-overlapping copies of $\gamma$ in
the segments $\alpha_i$. The union of these sets of copies projects to a
non-overlapping collection in $T_g/\langle g \rangle$, yielding the first
inequality. For the second inequality, start with a maximal
collection of non-overlapping copies of $\gamma$ in $T_g / \langle g
\rangle$. At most $k$ of these copies contain junctures in their 
interiors. Each remaining copy lifts to a copy of $\gamma$ in one of the
segments $\alpha_i$, and no two of these lifts overlap. Hence
$\sum_i c_{\gamma}(\alpha_i) \geq c_{\gamma}(g) - k$. 
\end{proof}

The main technical result of this section is the following theorem.

\begin{theorem}\label{defect}
  Suppose $G$ acts on a simplicial tree $T$. Let $\gamma$ be an
  oriented segment in $T$ (with endpoints possibly not at
  vertices). Then the functions $f_{\gamma}$ and $h_{\gamma}$ defined
  in \eqref{f-def} and \eqref{h-def} are quasimorphisms on $G$ with
  defect at most $6$.
\end{theorem}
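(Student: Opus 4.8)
The plan is to establish the quasimorphism estimate for $f_\gamma$ in the one-sided form $f_\gamma(ab)-f_\gamma(a)-f_\gamma(b)\le 6$ for all $a,b\in G$, and then to run essentially the same argument for $h_\gamma$. The reduction to a one-sided bound exploits the symmetry $f_\gamma(g^{-1})=-f_\gamma(g)$ (and the identical statement for $h_\gamma$): the axis of $g^{-1}$ is that of $g$ with reversed orientation, so $c_\gamma(g^{-1})=c_{\overline\gamma}(g)$, hence $f_\gamma(g^{-1})=c_{\overline\gamma}(g)-c_\gamma(g)=-f_\gamma(g)$. Consequently $f_\gamma(gh)-f_\gamma(g)-f_\gamma(h)=-\bigl(f_\gamma(h^{-1}g^{-1})-f_\gamma(h^{-1})-f_\gamma(g^{-1})\bigr)$, so the set of values of the defect expression is symmetric about $0$ and an upper bound of $6$ yields $\abs{f_\gamma(gh)-f_\gamma(g)-f_\gamma(h)}\le 6$.

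The heart of the argument is the case in which $a$, $b$, and $ab$ are all hyperbolic with disjoint axes. Here I would invoke the standard description of the axis of a product in a tree: letting $\beta=[p,q]$ be the bridge from $A_a$ to $A_b$ (with $p\in A_a$, $q\in A_b$, oriented from $p$ to $q$), the segment $[p,(ab)p]$ is a fundamental domain for $ab$, and it splits as a concatenation of four non-overlapping subsegments: a fundamental domain $[p,ap]$ for $a$, a copy $a\beta$ of $\beta$, a copy $a[q,bq]$ of a fundamental domain for $b$, and a copy $(ab)\overline\beta$ of $\overline\beta$. Applying Lemma~\ref{junctures} with $k=4$ to this splitting, and Lemma~\ref{junctures} with $k=1$ to compare the fundamental domains for $a$ and $b$ with the circuits $T_a/\langle a\rangle$ and $T_b/\langle b\rangle$, gives
\[
c_\gamma(ab)\ =\ c_\gamma(a)+c_\gamma(b)+c_\gamma(\beta)+c_{\overline\gamma}(\beta)+\epsilon,\qquad \epsilon\in[-2,4],
\]
together with the same identity for $\overline\gamma$ in place of $\gamma$. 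Subtracting, the bridge terms $c_\gamma(\beta)+c_{\overline\gamma}(\beta)$ appear identically in both and cancel — this is the key point: the bridge is traversed once in each direction, so it contributes equally to the $\gamma$-count and the $\overline\gamma$-count and disappears from $f_\gamma=c_\gamma-c_{\overline\gamma}$. What remains is $f_\gamma(ab)-f_\gamma(a)-f_\gamma(b)\in[-6,6]$, the constant $6$ being precisely the worst combined value of the juncture errors.

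All the remaining cases are handled in the same spirit, by exhibiting a decomposition of a fundamental domain for $ab$ (or, when $ab$ is elliptic, of the relevant piece of the dynamics) into at most four subsegments, each of which is a translate of a fundamental domain for $a$ or $b$ or occurs in an oppositely-oriented pair that cancels in $c_\gamma-c_{\overline\gamma}$. When $A_a$ and $A_b$ meet in a single point, or in a positive-length segment along which $a$ and $b$ act coherently, there is no bridge and one needs only two or three pieces, so the estimate improves. When one or more of $a$, $b$, $ab$ is elliptic, the corresponding $f_\gamma$-values vanish and the fundamental domain of the hyperbolic product, if any, again decomposes as an $a$- or $b$-fundamental domain plus a bridge traversed both ways. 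The one genuinely delicate case is $a,b$ hyperbolic with $ab$ elliptic: then $A_a\cap A_b$ is a positive-length segment along which $a$ and $b$ act anti-coherently (a coherent overlap would make $ab$ hyperbolic), and it is long enough that $\abs{a}=\abs{b}$ and that the overlap surjects onto both circuits $T_a/\langle a\rangle$ and $T_b/\langle b\rangle$; these circuits therefore coincide up to reversal of orientation, which forces $f_\gamma(b)=-f_\gamma(a)$ up to a bounded error, so that $\abs{f_\gamma(a)+f_\gamma(b)}$ is bounded while $f_\gamma(ab)=0$.

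For $h_\gamma$, rather than homogenizing $f_\gamma$ and settling for the bound of $12$ that the general homogenization inequality would give, I would apply the decompositions above directly to $(ab)^n$: a fundamental domain for $(ab)^n$ is $n$ concatenated copies of the four-piece splitting, Lemma~\ref{junctures} bounds the resulting juncture over-count by a fixed multiple of $n$, and the bridge pieces still occur in canceling pairs, so dividing by $n$ and letting $n\to\infty$ produces the same identities for the homogenized counts $\overline c_\gamma(g)=\lim_n c_\gamma(g^n)/n$, and hence defect at most $6$ for $h_\gamma$. I expect the main obstacle to be the case analysis for overlapping axes: enumerating the possible mutual positions of $A_a$ and $A_b$, choosing base points so that the fundamental domain of $ab$ really does split into the advertised pieces, and tracking the orientations precisely enough both to see every cancellation and to certify that the number of junctures never exceeds what the constant $6$ tolerates — the individual estimates, once set up, are all applications of Lemma~\ref{junctures}.
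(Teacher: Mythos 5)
Your proposal follows the same strategy as the paper's proof: a case analysis over the configurations of $T_a$, $T_b$, and $T_{ab}$, with the defect controlled in each case by Lemma~\ref{junctures} applied to decompositions of the relevant fundamental domains, and with the key observation that oppositely oriented copies of the same segment contribute equally to $c_\gamma$ and $c_{\overline\gamma}$ and hence drop out of $f_\gamma$. You carry out the disjoint-axes case in full, correctly, including the direct treatment of $(ab)^n$ that makes the bound $6$ for $h_\gamma$ rather than the $12$ one would get by homogenizing blindly; and your reduction to a one-sided bound via $f_\gamma(g^{-1})=-f_\gamma(g)$ is a neat streamlining that the paper does not use. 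The one place your blueprint is imprecise is the negative-overlap case (the paper's Case~III): there the $ab$-fundamental domain does \emph{not} decompose into $a$- and $b$-fundamental domains plus a doubled bridge. Instead the $a$- and $b$-fundamental domains each split into two pieces, one going into the $ab$-fundamental domain and the other being the shared overlap segment $\rho$, which appears with opposite orientations in the $a$- and $b$-decompositions and cancels there; the $ab$-fundamental domain itself has only two pieces and no canceling pair. The juncture count is still $2+2+2=6$, so the estimate survives, but the decomposition runs in the opposite direction from the one you advertise. You partly anticipate this when you flag ``choosing base points so that the fundamental domain of $ab$ really does split into the advertised pieces'' as the main obstacle; the fix is to allow the splitting to happen on the $a$- and $b$-sides and to count junctures across all three quotient circuits rather than on the $ab$-side alone.
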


\begin{proof}
We will prove the result for $h_{\gamma}$ directly. Replacing ``$n$''
throughout by ``$1$'' yields a proof of the result for $f_{\gamma}$. 

Fix elements $g, h \in G$. We wish to show that $\abs{h_{\gamma}(gh) -
  h_{\gamma}(g) - h_{\gamma}(h)} \leq 6$. There are several cases,
corresponding to different configurations of the characteristic subtrees
$T_g$, $T_h$, and $T_{gh}$. 

\medskip
\emph{Case I: $T_g$ and $T_h$ are disjoint.} Let $\rho$ be the segment
joining $T_h$ to $T_g$, oriented from $T_h$ and towards $T_g$. Let $\rho'
= g\overline{\rho}$ (which is a copy of $\overline{\rho}$). 

If $g$ and $h$ are both hyperbolic, let $\alpha$ and $\beta$ be
fundamental domains for $h$ and $g$ respectively, as indicated in Figure
\ref{fig:I}. 
\begin{figure}[ht]
\labellist
\small\hair 2pt
\pinlabel {$\alpha$} [b] at 146 32
\pinlabel {$\beta$} [t] at 199 12
\pinlabel {$\rho$} [r] at 171 21
\pinlabel {$\rho'$} [r] at 226 23
\pinlabel {$T_h$} [l] at 225 43
\pinlabel {$T_g$} [l] at 278 0
\pinlabel {$T_{gh}$} [l] at 337 27
\endlabellist
\centering
\includegraphics{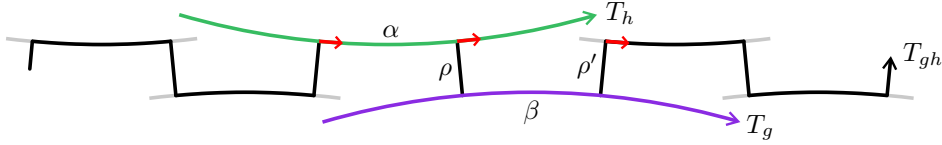}
\caption{Case I, $g$ and $h$ hyperbolic. $T_h$ is green, $T_g$ is purple,
  $T_{gh}$ is black. The red edges are of the form $e$, $he$, and $ghe$.}
\label{fig:I} 
\end{figure}
Note that $gh$ has a fundamental domain given by the concatenation
$\alpha \cdot \rho \cdot \beta \cdot \rho'$, by Remark \ref{easyway}. More
generally, $(gh)^n$ has a fundamental domain made of $n$ copies each of
$\alpha$, $\rho$, $\beta$, and $\overline{\rho}$. Lemma \ref{junctures}
yields 
\begin{align}\label{e1}
n(c_{\gamma}(\alpha) + c_{\gamma}(\rho) + c_{\gamma}(\beta) +
c_{\gamma}(\overline{\rho})) \ &\leq \ c_{\gamma}((gh)^n) \ \notag \\ 
&\leq \ n(c_{\gamma}(\alpha) + c_{\gamma}(\rho) + c_{\gamma}(\beta) + 
c_{\gamma}(\overline{\rho})) + 4n
\end{align}
and
\begin{align}\label{e2}
n(c_{\overline{\gamma}}(\alpha) + c_{\overline{\gamma}}(\rho) +
c_{\overline{\gamma}}(\beta) + c_{\overline{\gamma}}(\overline{\rho})) \
&\leq \ c_{\overline{\gamma}}((gh)^n) \ \notag \\ &\leq \
n(c_{\overline{\gamma}}(\alpha) + c_{\overline{\gamma}}(\rho) +
c_{\overline{\gamma}}(\beta) + c_{\overline{\gamma}}(\overline{\rho})) +
4n. 
\end{align}
Subtracting \eqref{e2} from \eqref{e1} yields
\begin{equation}\label{e3}
n(f_{\gamma}(\alpha) + f_{\gamma}(\beta)) - 4n \ \leq \ f_{\gamma}((gh)^n) \
\leq \ n(f_{\gamma}(\alpha) + f_{\gamma}(\beta)) + 4n. 
\end{equation}
Since $h^n$ and $g^n$ have fundamental domains made of $n$ copies of
$\alpha$ and $\beta$ respectively, Lemma \ref{junctures} also yields, in
a similar way, 
\begin{equation}\label{e4}
nf_{\gamma}(\alpha) - n \ \leq \ f_{\gamma}(h^n) \ \leq \
nf_{\gamma}(\alpha) + n 
\end{equation}
and 
\begin{equation}\label{e5}
nf_{\gamma}(\beta) - n \ \leq \ f_{\gamma}(g^n) \ \leq \
nf_{\gamma}(\beta) + n. 
\end{equation}
Subtracting \eqref{e4} and \eqref{e5} from \eqref{e3} yields
\begin{equation*}
-6n \ \leq \ f_{\gamma}((gh)^n) - f_{\gamma}(g^n) - f_{\gamma}(h^n) \
\leq \ 6n. 
\end{equation*}
Dividing by $n$ and taking a limit, we obtain 
$\abs{h_{\gamma}(gh) - h_{\gamma}(g) - h_{\gamma}(h)} \ \leq \ 6$,
as desired. 

\medskip
\emph{Remark.} In the argument just given, the fundamental
domains for $h$, $g$, and $gh$ respectively were decomposed into one,
one, and four segments; hence $T_h/\langle h\rangle$, $T_g/\langle
g\rangle$, and $T_{gh}/\langle gh \rangle$ contained a total of six
junctures. These junctures were the only source of defect, since the
individual segments always contributed zero to $\abs{f_{\gamma}(gh) -
  f_{\gamma}(g) - f_{\gamma}(h)}$. Every case below follows the same
pattern: the defect 
will be bounded above by the total number of junctures appearing in the
quotient circuits. In what follows, we will describe the structure of 
$T_h/\langle h\rangle$, $T_g/\langle g\rangle$, and $T_{gh}/\langle gh
\rangle$ in each case and leave some of the details of the estimates to
the reader. 

\medskip
Returning to Case I, suppose $g$ and $h$ are both elliptic. Then $g^n$
and $h^n$ are also elliptic, and $(gh)^n$ has a fundamental domain made
of $n$ copies of $\rho$ and $n$ copies of $\overline{\rho}$. See Figure
\ref{fig:I-e}. Using Lemma
\ref{junctures} one obtains 
\[ \abs{f_{\gamma}((gh)^n) - f_{\gamma}(g^n) - f_{\gamma}(h^n)} \ = \
\abs{f_{\gamma}((gh)^n)} \ \leq \ 2n,\]
for a defect of at most $2$. 
\begin{figure}[ht]
\labellist
\small\hair 2pt
\pinlabel {$\rho$} [tr] at 111 24
\pinlabel {$T_h$} [c] at 92 42
\pinlabel {$T_g$} [c] at 129 7
\pinlabel {$T_{gh}$} [l] at 293 26
\endlabellist
\centering
\includegraphics{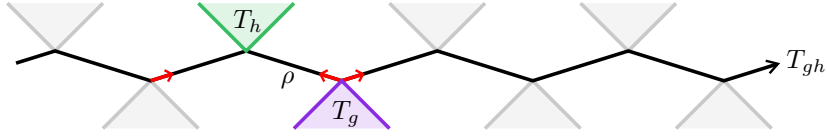}
\caption{Case I, $g$ and $h$ elliptic. $T_h$ is green, $T_g$ is purple,
  $T_{gh}$ is black. The red edges are of the form $e$, $he$, and $ghe$.}
\label{fig:I-e} 
\end{figure}

If one of $g$ and $h$ is elliptic, say $g$, then $h^n$ has a fundamental
domain given by $n$ copies of $\alpha$, and $(gh)^n$ has a fundamental
domain given by $n$ copies each of $\alpha$, $\rho$, and
$\overline{\rho}$. The estimate given by Lemma \ref{junctures} becomes 
\[ \abs{f_{\gamma}((gh)^n) - f_{\gamma}(g^n) - f_{\gamma}(h^n)} \ = \
\abs{f_{\gamma}((gh)^n) - f_{\gamma}(h^n)} \ \leq \ 4n,\]
for a defect of at most $4$.  

\medskip \emph{Case II: $g$ and $h$ are hyperbolic, with positive
  overlap.} That is, $T_g$ and $T_h$ intersect in a segment, on which
$T_g$ and $T_h$ induce the same orientation. Let $e$ be an edge in
$T_g \cap T_h$, oriented coherently with $T_g$ and $T_h$. Let $v$ be
the terminal vertex of $e$. Since $e \in T_h$, the edges $h^{-1}e$ and
$e$ are coherently oriented and $\alpha = [h^{-1}v, v]$ is a
fundamental domain for $h$. Similarly, $e$ and $ge$ are coherently
oriented and $\beta = [v, gv]$ is a fundamental domain for $g$.

Let all the edges of $\alpha$ and $\beta$ be given orientations from
$T_h$ and $T_g$ respectively. 
Since $g$ and $h$ both move $e$ in the same direction (that is, into the
same component of $T - \{e\}$), $e$ separates $h^{-1}e$ from $ge$. It
follows that the edges of $\alpha$ and of $\beta$ are all coherently
oriented in $T$. Hence $\alpha$ and $\beta$ do not overlap, and $\alpha
\cdot \beta = [h^{-1}v, gv]$ is a fundamental domain for $gh$. 

With a total of four junctures (one for $h$, one for $g$, two for $gh$),
the estimate given by Lemma \ref{junctures} becomes 
\[ \abs{f_{\gamma}((gh)^n) - f_{\gamma}(g^n) - f_{\gamma}(h^n)} \ \leq \
4n,\] 
for a defect of at most $4$. 

\medskip \emph{Case III: $g$ and $h$ are hyperbolic, with negative
  overlap.} That is, $T_g$ and $T_h$ intersect in a segment, on which
$T_g$ and $T_h$ induce opposite orientations. Let $\Delta$ be the
length (possibly infinite) of $T_g \cap T_h$. There are several
sub-cases, according to the relative sizes of $\abs{g}$, $\abs{h}$,
and $\Delta$.

\medskip
\emph{Sub-case III-A: $\Delta \leq \abs{g}, \abs{h}$, not all three
  numbers equal.} Let $\rho$ be the segment $T_h \cap T_g$, oriented
coherently with $T_h$. There is a fundamental domain for $h$ of the form
$\alpha \cdot \rho$, and similarly, a fundamental domain for $g$ of the
form $\overline{\rho} \cdot \beta$; see Figure \ref{fig:III-A}. Then
$\alpha \cdot \beta$ is a fundamental domain for $gh$. (By assumption, at
least one of $\alpha$, $\beta$ is a non-trivial segment, and $gh$ is
hyperbolic.)  

\begin{figure}[ht]
\labellist
\small\hair 2pt
\pinlabel {$\alpha$} [tl] at 102 21
\pinlabel {$\beta$} [tr] at 137 19
\pinlabel {$\rho$} [r] at 112 50
\pinlabel {$T_h$} [l] at 149 97
\pinlabel {$T_g$} [l] at 183 2
\pinlabel {$T_{gh}$} [l] at 318 20
\endlabellist
\centering
\includegraphics{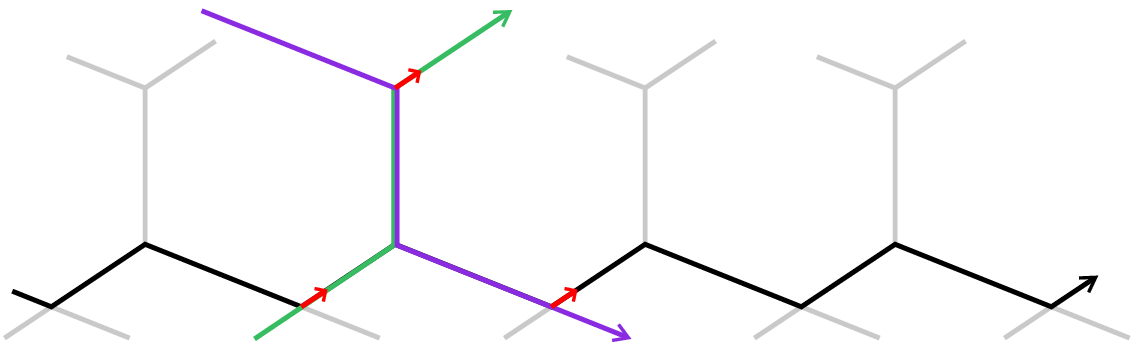}
\caption{Case III-A. $T_h$ is green, $T_g$ is purple, $T_{gh}$ is
  black. The red edges are of the form $e$, $he$, and 
  $ghe$.}\label{fig:III-A} 
\end{figure}

The quotient circuits have at most six junctures: two for $g$, two for
$h$, and two for $gh$. Lemma \ref{junctures} leads to an estimate
\[ \abs{f_{\gamma}((gh)^n) - f_{\gamma}(g^n) - f_{\gamma}(h^n)} \ \leq \
6n,\]
for a defect of at most $6$. 

\medskip
\emph{Sub-case III-B: $\abs{g} = \abs{h} \leq \Delta$.} In this case,
we show that $gh$ is elliptic. Let $\alpha \subset T_h \cap T_g$ be a fundamental
domain for $h$. Then $\overline{\alpha}$ is a fundamental domain for $g$, 
and $gh$ fixes the initial endpoint of $\alpha$. With two junctures in
total, we obtain the estimate
\[ \abs{f_{\gamma}((gh)^n) - f_{\gamma}(g^n) - f_{\gamma}(h^n)} \ = \
\abs{- f_{\gamma}(g^n) - f_{\gamma}(h^n)} \ \leq \ 2n,\]
for a defect of at most $2$. 

\medskip
\emph{Sub-case III-C: $\abs{h} < \abs{g}, \Delta$.} There is a
simplicial fundamental domain $\alpha$ for $h$ such that if $e$ is the
initial edge 
of $\alpha$, then $\alpha \cdot he$ is contained in $T_h \cap T_g$. Then,
there is a fundamental domain for $g$ of the form $\overline{\alpha}
\cdot \beta$; see Figure \ref{fig:III-CD}. By considering the location
of $ghe$, one finds that $\beta$ is a fundamental domain for $gh$. The
three circuits have a total of four junctures, and we obtain 
\[ \abs{f_{\gamma}((gh)^n) - f_{\gamma}(g^n) - f_{\gamma}(h^n)} \ \leq \
4n,\]
for a defect of at most $4$. 

\begin{figure}[ht]
\labellist
\small\hair 2pt
\pinlabel {$\alpha$} [b] at 58 34
\pinlabel {$\beta$} [tl] at 26 23
\pinlabel {$\alpha$} [bl] at 185 39
\pinlabel {$\beta$} [t] at 227 25
\pinlabel {$e$} [tl] at 48 14
\pinlabel {$he$} [tl] at 89 14
\pinlabel {$ghe$} [b] at 7 30
\pinlabel {$e$} [t] at 165 37
\pinlabel {$he$} [b] at 248 48
\pinlabel {$ghe$} [b] at 221 46.5
\pinlabel {$T_h$} [tl] at 128 57
\pinlabel {$T_g$} [r] at 2 3
\pinlabel {$T_h$} [tl] at 290 57
\pinlabel {$T_g$} [r] at 164 3
\endlabellist
\centering
\includegraphics{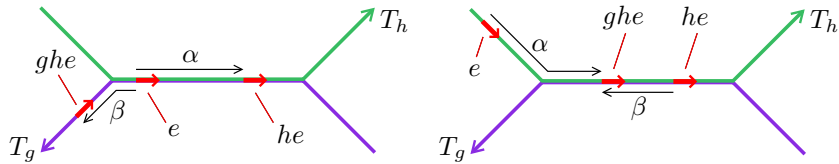}
\caption{Cases III-C (left) and  III-D (right). $T_h$ is green, $T_g$ is
  purple.}\label{fig:III-CD} 
\end{figure}

\medskip
\emph{Sub-case III-D: $\abs{g} < \abs{h}, \Delta$.} Fundamental domains 
$\beta$, $\alpha$, and $\alpha \cdot \overline{\beta}$ 
for $g$, $gh$, and $h$ respectively can be constructed in a similar
fashion as in Case 
III-C; see Figure~\ref{fig:III-CD}.  Alternatively, this case reduces to
Case III-C, replacing $g$ and $h$ by $h^{-1}$ and $g^{-1}$ respectively.  

%Let $\beta$ be a
%simplicial fundamental domain for $g$ such that both $\beta$ and the edge
%immediately preceding it along $T_g$ lie within $T_h \cap T_g$. There is
%a fundamental domain for $h$ of the form $\alpha \cdot
%\overline{\beta}$. Let $e$ be the initial edge of $\alpha$. Then $e$,
%$he$, and $ghe$ lie on $T_h$ as indicated in Figure \ref{fig:III-CD},
%showing that $\alpha$ is a fundamental domain for $gh$. The three
%circuits have a total of four junctures, and we obtain 
%\[ \abs{f_{\gamma}((gh)^n) - f_{\gamma}(g^n) - f_{\gamma}(h^n)} \ \leq \
%4n,\]
%for a defect of at most $4$. 

\medskip
\emph{Case IV: $g$ is elliptic, $h$ is hyperbolic, $T_g \cap T_h \not=
  \emptyset$.} If $T_g \cap T_h$ contains an edge $e$, let $\alpha
\subset T_h$ be the fundamental domain starting with $h^{-1}e$. Then
$\alpha$ is also a fundamental domain for $gh$. This leads to an estimate 
\[ \abs{f_{\gamma}((gh)^n) - f_{\gamma}(g^n) - f_{\gamma}(h^n)} \ = \
\abs{f_{\gamma}((gh)^n) - f_{\gamma}(h^n)} \ \leq \ 2n,\] 
and a defect of at most $2$. 

If $T_g \cap T_h$ is a single vertex $v$, let $e\in T_h$ be the
coherently oriented edge with initial vertex $v$, and let $\alpha$ be the
fundamental domain $[h^{-1}v, v]$. If $ge \not\in \alpha$ then $\alpha$
is a fundamental domain for $gh$ also, and we obtain a defect of at most
$2$ as above. 

So now assume that $ge \in \alpha$, i.e.\ that $ge$ separates $h^{-1}v$
from $v$. Note that $h^{-1}e$ and $ge$ are not coherently oriented, so
the characteristic subtree $T_{gh}$ will not contain these edges. 

We have that $gh(\alpha) \cap \alpha$ contains the edge $ge$. Consider
the length of $gh(\alpha) \cap \alpha$. If this length is $\abs{\alpha}/2$ or
greater, then $gh$ fixes the midpoint of $\alpha$. Then
\[ \abs{f_{\gamma}((gh)^n) - f_{\gamma}(g^n) - f_{\gamma}(h^n)} \ = \
\abs{- f_{\gamma}(h^n)} \ \leq \ n,\] 
giving a defect of at most $1$. 

Otherwise, there is a subsegment $\beta \subset \alpha$, centered on the
midpoint of $\alpha$, of maximal size so that $\beta$ does not overlap
$gh\beta$. We can write $\alpha$ as a concatenation $\alpha_1 \cdot \beta
\cdot \alpha_2$, where $\alpha_2 = gh\overline{\alpha}_1$. See Figure
\ref{fig:IV}. 
\begin{figure}[ht]
\labellist
\small\hair 2pt
\pinlabel {$\alpha_1$} [b] at 46 13
\pinlabel {$\alpha_2$} [b] at 117 31
\pinlabel {$gh \alpha_2$} [b] at 46 49
\pinlabel {$\beta$} [tl] at 83 19
\pinlabel {$gh \beta$} [tr] at 81 36
\pinlabel {$T_h$} [l] at 187 26
\pinlabel {$T_g$} [c] at 147 14
\pinlabel {$T_{gh}$} [r] at 82 73
\endlabellist
\centering
\includegraphics{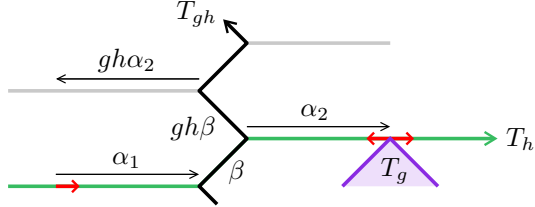}
\caption{Case IV. The element $gh$ takes $\alpha_1$ to
  $\overline{\alpha}_2$. Left to right, the red edges are $h^{-1}e$, $ge$, and
$e$.}\label{fig:IV}   
\end{figure}
Now $\beta$ is a fundamental domain for $gh$, and we have a total of
four junctures (three in $T_h / \langle h\rangle$ and one in
$T_{gh}/\langle gh \rangle$). Thus we have 
\[ \abs{f_{\gamma}((gh)^n) - f_{\gamma}(g^n) - f_{\gamma}(h^n)} \ = \
\abs{f_{\gamma}((gh)^n) - f_{\gamma}(h^n)} \ \leq \ 4n,\] 
and a defect of at most $4$. 

\medskip
\emph{Case V: $h$ is elliptic, $g$ is hyperbolic, $T_g \cap T_h \not=
  \emptyset$.} This case is covered by Case IV, replacing $g$ and $h$ by
$h^{-1}$ and $g^{-1}$ respectively. 

\medskip
\emph{Case VI: the remaining cases.} If $g$ and $h$ are hyperbolic and
$T_g$ and $T_h$ intersect in one point, then the configuration closely
resembles the first one discussed in Case I, except that the copies of
$\rho$ have been shrunk to have length zero. That is, there are
fundamental domains $\alpha$ and $\beta$ for $h$ and $g$ respectively,
such that $\alpha \cdot \beta$ is a fundamental domain for $gh$. With
four junctures, we obtain 
\[ \abs{f_{\gamma}((gh)^n) - f_{\gamma}(g^n) - f_{\gamma}(h^n)} \ \leq \
4n,\]
for a defect of at most $4$. 

Lastly, if $g$ and $h$ have a common fixed point, then
\[\abs{f_{\gamma}((gh)^n) - f_{\gamma}(g^n) - f_{\gamma}(h^n)} \ = \ 0\] 
for all $n$. 
\end{proof}

\begin{remark}\label{rem:rtree}
  The functions $f_\gamma$ and $h_\gamma$ can be defined in the more
  general setting of a group acting on an $\R$--tree.  The proof of
  Theorem~\ref{defect} goes through in this setting, with only
  superficial modifications (essentially, removing any mention of
  \emph{edges}, and using small segments instead). 
\end{remark}

\subsection*{Well-aligned elements}
We now consider elements $g \in G$ for which we can find a segment
$\gamma \subset T$ such that $h_\gamma(g) = 1$.

\begin{definition}\label{def:well-aligned}
  Given a $G$--tree $T$, a hyperbolic element $g \in G$ is
  \emph{well-aligned} if there does not exist an element $h\in G$ such
  that $ghgh^{-1}$ fixes an edge of $T_g$. 
  This property is the $G$--tree analogue of the double coset
  condition from \cite[Theorem~D]{CF}.  
\end{definition}

\begin{theorem}\label{th:well-aligned}
  Suppose $G$ acts on a simplicial tree $T$. If $g\in G$ is
  well-aligned then $\scl(g) \geq 1/12$.
\end{theorem}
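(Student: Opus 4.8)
The plan is to invoke the easy direction of Bavard Duality (Proposition~\ref{bavard}): it suffices to produce a homogeneous quasimorphism on $G$ of defect at most $6$ whose value on $g$ is $1$, since this gives $\scl(g) \geq 1/(2 \cdot 6) = 1/12$. The quasimorphisms $h_\gamma$ constructed in Theorem~\ref{defect} already have defect at most $6$, so the entire task is to choose an oriented segment $\gamma \subset T$ for which $h_\gamma(g) = 1$, and the well-aligned hypothesis must be exactly what makes this possible.

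Since a well-aligned element is hyperbolic by definition, $g$ has an axis $T_g$ (a bi-infinite reduced edge path) and translation length $L = \abs{g} \geq 1$. First I would fix an edge $e_0$ of $T_g$, let $x$ be its midpoint, and set $\gamma = [x, gx]$, a fundamental domain for $g$ oriented coherently with $T_g$. For any $n$ the quotient circuit $T_{g^n}/\langle g^n\rangle = T_g/\langle g^n\rangle$ has length $nL$; the translates $\gamma, g\gamma, \dots, g^{n-1}\gamma$ project to $n$ non-overlapping positively oriented copies of $\gamma$ tiling this circuit, and a length count shows no more than $n$ non-overlapping copies of $\gamma$ can fit. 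Hence $c_\gamma(g^n) = n$ for every $n$.

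The crux is to show $c_{\overline{\gamma}}(g^n) = 0$ for all $n$. Suppose not: then some $h \in G$ gives a positively oriented copy $h\overline{\gamma}$ of $\overline{\gamma}$ lying in $T_g$; equivalently $h\gamma = [hx, hgx] \subset T_g$ is oriented \emph{against} $T_g$. Parametrizing $T_g$ by $\R$ so that $g$ acts as translation by $+L$, this says the coordinate of $hgx$ is $L$ less than that of $hx$, so $g(hgx)$ has the same coordinate as $hx$ and therefore $g(hgx) = hx$. Thus $h^{-1}ghg$ fixes $x$, and conjugating by $h$ shows $ghgh^{-1} = h(h^{-1}ghg)h^{-1}$ fixes $hx$, which is the midpoint of the edge $h(e_0)$; since $hx \in T_g$ and $T_g$ is a subcomplex, $h(e_0) \subset T_g$. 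As the action is without inversions, fixing the midpoint forces $ghgh^{-1}$ to fix the edge $h(e_0)$ of $T_g$, contradicting well-alignedness. Therefore $f_\gamma(g^n) = c_\gamma(g^n) - c_{\overline{\gamma}}(g^n) = n$ for all $n$, so $h_\gamma(g) = \lim_n f_\gamma(g^n)/n = 1$; combined with Theorem~\ref{defect} and Proposition~\ref{bavard} this yields $\scl(g) \geq 1/12$.

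The main obstacle is the implication in the previous paragraph — that a coherently oriented copy of $\overline{\gamma}$ inside the axis forces $ghgh^{-1}$ to fix an edge of $T_g$. Keeping the orientations straight and, crucially, arranging that the fixed point is interior to an edge that genuinely lies on $T_g$ (the reason $\gamma$ is chosen to begin at the midpoint of $e_0$ rather than at a vertex) is the only delicate point; the rest is bookkeeping with the definitions of $c_\gamma$, $f_\gamma$, and $h_\gamma$.
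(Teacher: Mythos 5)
Your proposal is correct and takes essentially the same route as the paper: choose the fundamental domain $\gamma=[x,gx]$ with $x$ interior to an edge of $T_g$, observe $c_\gamma(g^n)=n$, and show that any positively oriented copy of $\overline{\gamma}$ in $T_g$ would force $ghgh^{-1}$ to fix an edge of $T_g$ (contradicting well-alignedness), whence $h_\gamma(g)=1$; then apply Theorem~\ref{defect} and Proposition~\ref{bavard}. The only cosmetic difference is that the paper deduces the fixed point of $ghgh^{-1}$ from the negative overlap of $T_g$ and $T_{hgh^{-1}}$ (as in Sub-case III-B of Theorem~\ref{defect}), while you verify it directly by a coordinate computation on the axis --- both amount to the same observation.
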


\begin{proof}
  Let $\gamma = [x, gx] \subset T_g$ be a fundamental domain for $g$
  where $x$ is \emph{not} a vertex of $T$. We know that
  $c_{\gamma}(g^n) = n$ for all $n$. If $c_{\overline{\gamma}}(g^n)>
  0$ for some $n$, then there is a copy of $\overline{\gamma}$ in
  $T_g$. That is, there is an element $h$ such that $h \gamma$ lies in
  $T_g$ with the opposite orientation. So $h(T_g) = T_{hgh^{-1}}$ has
  negative overlap with $T_g$ along a segment containing $h
  \gamma$. The element $ghgh^{-1}$ fixes one of the endpoints of
  $h\gamma$, since $g$ and $hgh^{-1}$ shift it in opposite directions
  inside $T_g \cap T_{hgh^{-1}}$. This endpoint is in the interior of
  an edge $e \subset T_g$, and so $ghgh^{-1}$ fixes $e$. Hence, if $g$
  is well-aligned, we must have $h_{\gamma}(g) = 1$. By
  Theorem~\ref{defect}, $h_{\gamma}$ is a homogeneous quasimorphism
  with defect at most $6$, and so Proposition \ref{bavard} implies
  that $\scl(g) \geq 1/12$.
\end{proof}

This bound is in fact \emph{optimal}.  Both in HNN extensions and in
amalgamated free products, there are examples of elements $g$ with
$\scl(g)=1/12$ that are well-aligned with respect to the action on the
associated Bass--Serre tree, as we now explain.  This answers
Question~8.4 from \cite{CF}. 

\begin{theorem}\label{th:optimal}
  Let $g = tat^{-1}a \in BS(2,3)$ and let $T$ be the Bass--Serre tree
  associated to the splitting of $BS(2,3)$ as an HNN extension $\langle a
  \rangle*_{\langle ta^2t^{-1} = a^3\rangle}$.  Then $g$ is
  well-aligned and $\scl(g) = 1/12$.  In particular, the bound in
  Theorem~\ref{th:well-aligned} is optimal.
\end{theorem}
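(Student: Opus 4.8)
The plan is to prove the two assertions separately, the first being immediate and the second being the real content.

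For $\scl(g) = 1/12$ there is essentially nothing to do beyond invoking the explicit formula. The word $tat^{-1}a$ is cyclically reduced in $BS(2,3)$ (since $2 \nmid 1$ and $3 \nmid 1$), so Proposition~\ref{prop:length2} applies with $m = 2$, $\ell = 3$, $i = j = 1$, giving $\scl(tat^{-1}a) = \frac{1}{2}\bigl(1 - \frac{\gcd(1,2)}{2} - \frac{\gcd(1,3)}{3}\bigr) = \frac{1}{2}\bigl(1 - \frac{1}{2} - \frac{1}{3}\bigr) = \frac{1}{12}$. By Theorem~\ref{th:well-aligned}, once we know $g$ is well-aligned this value is as small as possible among well-aligned elements, so the substance is the well-alignedness.

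First I would pin down the geometry of $g$ on $T$. Because $tat^{-1}a$ is cyclically reduced of positive $t$-length it is not conjugate into the vertex group $\langle a\rangle$, so $g$ is hyperbolic; writing $v_0 = \langle a\rangle$ for the base vertex, a short computation shows that the geodesic $[v_0, gv_0]$ is the concatenation of the two edges $[v_0, tv_0]$ and $[tv_0, gv_0]$ and is a fundamental domain for $g$. The key algebraic observation, a one-line check using the relation $ta^2t^{-1} = a^3$, is that $g$ commutes with $a^3$: indeed $g^{-1}a^3g = a^{-1}(ta^2t^{-1})a = a^{-1}a^3a = a^3$. I would then compute the two edge stabilizers directly, $\mathrm{Stab}([v_0,tv_0]) = \langle a\rangle \cap t\langle a\rangle t^{-1} = \langle a^3\rangle$ and likewise $\mathrm{Stab}([tv_0,gv_0]) = \langle a^3\rangle$, and conclude that, since every edge of $T_g$ is a $g$-translate of one of these two and $g$ centralizes $a^3$, \emph{every} edge of $T_g$ has stabilizer exactly $\langle a^3\rangle$.

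With this in hand the well-aligned property follows in one move. Suppose, for contradiction, that some $h \in BS(2,3)$ has $\phi := ghgh^{-1}$ fixing an edge $e$ of $T_g$. Then $\phi \in \mathrm{Stab}(e) = \langle a^3\rangle$, so $\phi = a^{3k}$ for some $k \in \Z$, whence $hgh^{-1} = g^{-1}a^{3k}$. But $g^{-1}a^{3k}$ is conjugate in $BS(2,3)$ to the cyclically reduced word $ta^{-1}t^{-1}a^{3k-1}$, while $g$ is the cyclically reduced word $tat^{-1}a$; by Collins' Lemma together with the uniqueness statement in Remark~\ref{cyc-red}, these are conjugate only if the pairs $(1,1)$ and $(-1, 3k-1)$ agree modulo the move $(i,j) \mapsto (i+2, j-3)$, and they never do (this would force $3k = 5$). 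This contradicts $hgh^{-1} \sim g$, so no such $h$ exists and $g$ is well-aligned. Combined with $\scl(g) = 1/12$ and Theorem~\ref{th:well-aligned}, this shows the bound $1/12$ there is optimal, answering Question~8.4 of \cite{CF}.

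I expect the one genuinely delicate step to be getting the combinatorics of $T_g$ exactly right — in particular verifying that every edge of the axis has stabilizer \emph{precisely} $\langle a^3\rangle$, rather than merely a conjugate of it, since this is what collapses the argument to the single line ``$\phi = a^{3k}$'' and lets me avoid the more laborious analysis of how the axes $T_g$ and $hT_g$ overlap. That verification rests on the commutation $[g,a^3]=1$ and on intersecting vertex stabilizers, both routine once set up carefully; everything else (the value of $\scl(g)$ and the non-conjugacy of $g$ and $g^{-1}a^{3k}$) is a direct application of results already established in the paper.
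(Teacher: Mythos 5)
Your proof is correct, and it takes a genuinely different route from the one in the paper. The paper's argument is hands-on: it reduces to the case that $h$ fixes one of the two vertices $v_0$ or $v_1 = tv_0$ on the axis (by first adjusting $h$ by powers of $g$ and then conjugating by powers of $g$), writes $h = a^r$ or $h = ta^rt^{-1}$, computes $ghgh^{-1}$ (or $hgh^{-1}g$) explicitly, and observes that if it is elliptic then cyclic reduction forces $r \equiv \pm1 \pmod 3$ (resp.\ $\pmod 2$), whereupon the element equals $a^5$, which stabilizes no edge. Your argument sidesteps the case analysis entirely with the structural observation that $g$ centralizes $a^3$ — verified by the identity $g^{-1}a^3g = a^{-1}(ta^2t^{-1})a = a^3$ — so that every edge of $T_g$ has stabilizer \emph{exactly} $\langle a^3\rangle$, forcing $ghgh^{-1}=a^{3k}$, and then Collins' Lemma (Remark~\ref{cyc-red}) rules out $g \sim g^{-1}a^{3k}$ since the cyclically reduced pairs $(1,1)$ and $(-1,3k-1)$ are never equivalent under $(i,j)\mapsto(i+2,j-3)$. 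What your approach buys is a clean conceptual reason (centralization of the edge group along the axis) that pins down the conjugacy class of any potential stabilizing element in one step, avoiding the paper's reduction lemma about where $h$ can be assumed to have a fixed point; the trade-off is that you lean more heavily on Collins' Lemma, whereas the paper's final contradiction is the slightly more elementary ``$a^5$ stabilizes no edge.'' Both are correct and comparable in total length; yours is arguably the more transparent of the two.
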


\begin{proof}
  Denote the vertex of $T$ stabilized by $\langle a \rangle$ by $v_0$
  and let $v_1 = tv_0$.  The vertices along the axis of $g$ are: $\{
  g^nv_0, g^nv_1 \}_{n \in \Z}$.

  If $ghgh^{-1}$ fixes an edge $e \subset T_g$, then we also see that
  $hgh^{-1}g$ fixes $g^{-1}e \subset T_g$.  Replacing $h$ by $hg^k$ for
  some $k$ (which does not affect $hgh^{-1}$), we can arrange that $h$
  fixes a vertex of $T_g$. By further replacing $h$ by a conjugate $g^k h
  g^{-k}$, we can arrange that the vertex fixed by $h$ is either $v_0$ or
  $v_1$; the elements $ghgh^{-1}$ and $hgh^{-1}g$ still fix edges of
  $T_g$. 

  First assume that $h$ fixes $v_0$, and so $h = a^r$ for some $r
  \in \Z$.  In this case \[ ghgh^{-1} = tat^{-1}a^{1 +
    r}tat^{-1}a^{1-r}.\] If $ghgh^{-1}$ is elliptic (which it
  necessarily is if it fixes an edge), then this expression cannot be
  cyclically reduced (Remark~\ref{cyc-red}).  Hence we find that $r
  \equiv \pm 1 \mod 3$.  If $r \equiv 1 \mod 3$, then $hgh^{-1}g =
  a^5$; if $r \equiv -1 \mod 3$ then $ghgh^{-1} = a^5$.  In either
  case, the element does not fix an edge in $T$, giving a contradiction. 

  Similarly, if $h$ fixes $v_1$, then we have $h = ta^rt^{-1}$ for some
  $r \in \Z$, and so
  \[ hgh^{-1}g = ta^{1+r}t^{-1}ata^{1-r}t^{-1}a.\] Again, this
  expression cannot be cyclically reduced if $hgh^{-1}g$ is elliptic,
  and so $r \equiv 1 \mod 2$.  Again, we find that $hgh^{-1}g = a^5$,
  giving a contradiction for the same reason as above.  Therefore $g$ is
  well-aligned as claimed.

  Finally, $\scl(tat^{-1}a) = 1/12$ by Proposition~\ref{prop:length2}. 
\end{proof}

The bound in Theorem~\ref{th:well-aligned} is still optimal if one
restricts to amalgamated free products.  In the free product
$\Z/2\Z*\Z/3\Z\cong \PSL(2,\Z)$, no nontrivial element fixes an edge of
the associated Bass--Serre tree, so every hyperbolic element that is not
conjugate to its inverse is well-aligned.  The group $\PSL(2,\Z)$ has a
finite index free subgroup, and therefore stable commutator length can be
computed in this group by using a relationship between stable commutator
length in a group and a finite index subgroup from \cite{Calegari:scl}
together with Calegari's algorithm for computing stable commutator length
in free groups \cite{Calegari:free}.  This is described explicitly in
\cite{Louwsma:thesis}.  The element
$\bigl(\begin{smallmatrix}1&0\\1&1\end{smallmatrix}\bigr)$ is an example
of an element that has stable commutator length $1/12$. 

\subsection*{Acylindrical trees} 
We conclude this section by adding a moderate restriction,
\emph{acylindricity}, to the tree action. We can then say something about
hyperbolic elements that are not necessarily well-aligned. 

Acylindricity 
has been used previously in the context of counting quasimorphisms on
Gromov-hyperbolic spaces, cf.~\cite{CF}.  For a tree, the definition is
particularly simple to state.  A group acts \emph{$K$--acylindrically} on
a tree if the stabilizer of any segment of length $K$ is trivial. 

\begin{theorem}\label{cor:acylindrical}
  Suppose $G$ acts $K$--acylindrically on a tree $T$ and let $N$ be
  the smallest integer greater than or equal to $\frac{K}{2} + 1$. 
\begin{enumerate}
\item \label{g1} If $g \in G$ is hyperbolic then either $\scl(g) = 0$ or
  $\scl(g) \geq 1/12N$. 
\item \label{g2} If $g \in G$ is hyperbolic and $\abs{g} \geq K$ then
  either $\scl(g) = 0$ or $\scl(g) \geq 1/24$. 
\end{enumerate}
In both cases, $\scl(g) = 0$ if and only if $g$ is conjugate to
  $g^{-1}$.
\end{theorem}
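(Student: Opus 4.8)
The plan is to produce, for each hyperbolic $g\in G$ that is not conjugate to $g^{-1}$, a homogeneous quasimorphism of small defect that is nonzero on $g$, and then invoke Bavard Duality (Proposition~\ref{bavard}) together with the defect bound of Theorem~\ref{defect}. Concretely, fix a point $x$ in the interior of an edge of $T_g$, let $M\geq1$, and take $\gamma=[x,g^Mx]$, a fundamental domain for $g^M$ oriented as $T_g$. Since any copy of $\gamma$ has length $M\abs{g}$ while $T_{g^{Mn}}/\langle g^{Mn}\rangle$ is a circuit of length $Mn\abs{g}$, one gets $c_\gamma(g^{Mn})=n$, hence $h_\gamma(g)=\tfrac1M\bigl(1-\lim_n c_{\overline\gamma}(g^{Mn})/n\bigr)$. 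The governing dichotomy is: either $T_g$ contains no positively oriented copy of $\overline\gamma$, in which case $h_\gamma(g)=1/M$; or it contains one, in which case its $\langle g^M\rangle$--translates tile $T_g$, forcing $c_{\overline\gamma}(g^{Mn})=n$, so that $f_\gamma(g^{Mn})=0$ and $h_\gamma(g)=0$ — the quasimorphism is then useless. So everything reduces to choosing $M$ so that, unless $g$ is conjugate to $g^{-1}$, no copy of $\overline\gamma$ lies in $T_g$.

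Suppose such a copy does lie in $T_g$. Exactly as in the proof of Theorem~\ref{th:well-aligned}, it produces $h\in G$ for which $T_g$ and $hT_g$ overlap, with opposite orientations, along a segment $\sigma$ of length $\geq M\abs{g}$, on which $g$ shifts by $+\abs{g}$ and $hgh^{-1}$ shifts by $-\abs{g}$. From this I extract a nontrivial element fixing a long subsegment of $\sigma$ and apply $K$--acylindricity. For part~(\ref{g2}) this is immediate with $M=2$: the element $ghgh^{-1}$ fixes a segment of length $\geq\abs{\sigma}-\abs{g}\geq\abs{g}\geq K$, hence $ghgh^{-1}=1$, i.e.\ $hgh^{-1}=g^{-1}$ and $g$ is conjugate to $g^{-1}$, a contradiction; so $h_\gamma(g)=1/2$ and $\scl(g)\geq1/24$. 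For part~(\ref{g1}), with $M=N$, the estimate from $ghgh^{-1}$ alone only yields a fixed segment of length $(N-1)\abs{g}$, which need not reach $K$. The improvement comes from using, instead, the element $g^jh$ for the integer $j$ making the copy $g^jh(\gamma)$ as centrally placed relative to $\gamma$ as possible: this element acts on $T_g$ as a near--reflection, so $(g^jh)^2$ fixes a segment of length $\geq(N-\tfrac12)\abs{g}$. When $\abs{g}\geq2$ this exceeds $K$ (as $2N-1\geq K+1$), so $(g^jh)^2=1$; since the copy is reversed we have $g^jh\neq1$, so $\tau:=g^jh$ is an involution reflecting $T_g$, whence $\tau g\tau$ shifts $T_g$ by $-\abs{g}$, so $\tau g\tau g$ fixes $T_g$ pointwise and is trivial by acylindricity, giving that $g$ is conjugate to $g^{-1}$.

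Assembling this, for $g$ hyperbolic and not conjugate to $g^{-1}$ we obtain $h_\gamma(g)=1/N$ for $\gamma=[x,g^Nx]$, so $\scl(g)\geq1/12N$ by Proposition~\ref{bavard} and Theorem~\ref{defect}; likewise $h_\gamma(g)=1/2$ for $\gamma=[x,g^2x]$ when $\abs{g}\geq K$, giving $\scl(g)\geq1/24$. For the final clause: if $g$ is conjugate to $g^{-1}$, say $g^n=hg^{-n}h^{-1}$ for all $n$, then $g^{2n}=[g^n,h]$ is a single commutator, so $\scl(g)=\lim_m\cl(g^{2nm})/(2nm)\leq\lim_m m/(2nm)=0$; the converse is precisely the lower bounds just established. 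This also supplies the ``$\scl(g)=0$ iff'' statement of Theorem~\ref{cor:acylindrical}.

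The main obstacle is the case $\abs{g}=1$ of part~(\ref{g1}): there the reflection--squaring argument gives only a fixed segment of length $N-\tfrac12$, which falls short of $K$ once $K\geq2$, so a more delicate argument is needed to rule out copies of $\overline\gamma=[\,g^Nx,x\,]$ in $T_g$ — one must iterate the folding, combining the near--reflections arising from consecutive $\langle g\rangle$--translates of the copy, or track more precisely how the fixed subtree of $(g^jh)^2$ meets the edges of $T_g$. Getting this bookkeeping right, so that the relevant exponent is the sharp $N=\lceil K/2+1\rceil$ rather than the cruder $\approx K$ coming from the $ghgh^{-1}$ estimate, is the technical heart of the proof; the remainder is routine given Theorems~\ref{defect} and~\ref{th:well-aligned}.
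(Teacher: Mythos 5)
There is a genuine gap, and you have already pointed at it: the case $\abs{g}=1$ in part~(\ref{g1}) is unresolved. The paper's resolution is much simpler than what you are attempting, and it is the step you are missing: if $\abs{g}=1$ then a fundamental domain for $g$ maps to a single loop in the quotient graph $T/G$, which shows that $g$ has infinite order in $H_1(G)$ and hence $\scl(g)=\infty$ (the same holds whenever $\abs{g}$ is odd). So for $\abs{g}=1$ the inequality $\scl(g)\geq 1/12N$ is vacuous, and one may simply assume $\abs{g}\geq 2$. No iterated ``folding'' or refined bookkeeping is needed; the result is \emph{not} proved by ruling out reversed copies when $\abs{g}=1$, but by noting the statement has no content there.

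Once $\abs{g}\geq 2$ is granted, your ``near-reflection'' detour is both unnecessary and not rigorously justified. The simple estimate from $ghgh^{-1}$ already suffices: it fixes a segment of length at least $\abs{\sigma}-\abs{g}\geq (N-1)\abs{g}\geq 2(N-1)\geq K$ since $N\geq K/2+1$. Moreover, your argument that $(g^jh)^2$ fixes a segment of length $\geq (N-\tfrac12)\abs{g}$ is unjustified in the general tree setting: $h$ need not be elliptic, and $g^jh$ maps $T_g$ to $hT_g$ rather than acting as a self-map of $T_g$, so there is no reason $(g^jh)^2$ should fix a long subsegment of $T_g$. The correct element to use is $ghgh^{-1}$, because $g$ and $hgh^{-1}$ both translate along the overlap $T_g\cap T_{hgh^{-1}}$ in opposite directions, which is exactly what makes the fixed-segment estimate go through. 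Your treatment of part~(\ref{g2}), the dichotomy via $h_\gamma$, and the ``$\scl(g)=0$ iff $g\sim g^{-1}$'' clause are all fine and essentially match the paper.
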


\begin{proof}
First note that if $\abs{g} = 1$ then 
a fundamental domain for $g$ maps to a single loop in the quotient graph
of $T$, which implies that $g$ has infinite order in the abelianization
of $G$, and $\scl(g) = \infty$. (In fact, the same conclusion holds
whenever $\abs{g}$ is odd.) Thus we may assume that $\abs{g} \geq 2$.  

  Observe that if for some $h \in G$, we have $\abs{T_g
    \cap T_{hgh^{-1}}} \geq K + \abs{g}$ where $g$ and $hgh^{-1}$
  shift in opposite directions, then $ghgh^{-1}$ fixes a segment of
  length $K$ and hence $g = hg^{-1}h^{-1}$.  In particular, $\scl(g) =
  0$. 

For \eqref{g1} note that $\abs{g^N} = N \abs{g} \geq \frac{K}{2}\abs{g} +
\abs{g} \geq K + \abs{g}$. Taking $\gamma$ to be a fundamental domain for
$g^N$, if $h_{\gamma}(g^N) < 1$ then there is an $h$ as above and
$g = hg^{-1}h^{-1}$, $\scl(g) = 0$. Otherwise, $h_{\gamma}(g^N)=1$ and
$\scl(g) = \frac{1}{N}\scl(g^N) \geq 1/12N$ by Theorem \ref{defect} and
Proposition \ref{bavard}. 

For \eqref{g2} let $N = 2$ and apply the same reasoning: $\abs{g^2}  =
2\abs{g} \geq K + \abs{g}$, and either $\scl(g) = 0$ or $\scl(g) =
\frac{1}{2} \scl(g^2) \geq 1/24$. 
\end{proof}

\section{The gap theorem}\label{sec:gap}

In this section we consider $G$--trees of a particular form, for which
we can improve upon the ``well-aligned'' condition in Theorem
\ref{th:well-aligned} without any trade-off in the lower bound of $1/12$. 

Let $G$ be an HNN extension $A \, \ast_C$, with
stable letter $t$, such that the edge groups $C$ and $C^t$ are central in
$A$. Let $T$ be the Bass--Serre tree associated to this HNN extension. 
Such a tree has special properties, given below in Lemma \ref{rotate} and
Proposition \ref{stabilizer}.

This class of HNN extensions obviously includes the Baumslag--Solitar
groups. It is still true (cf.~Remark~\ref{t-exp}) that an element cannot
have finite stable commutator length if its $t$--exponent is non-zero. 

We use the following notation for a $G$--tree $T$: if $X\subset T$ is any
subset, then $G_X$ denotes the \emph{stabilizer} of $X$, which is the
subgroup of $G$ consisting of those elements that fix $X$ pointwise. 

\begin{lemma}\label{rotate} 
Suppose $S \subset T$ is a subtree and $h\in G$ fixes a vertex $v\in
S$. Then $G_{S} = G_{h S}$.  
\end{lemma}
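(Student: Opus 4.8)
The plan is to reduce the statement to the formal identity $G_{hS} = hG_Sh^{-1}$, valid for any subset $S\subset T$ and any $h\in G$, combined with the observation that under our hypotheses $h$ actually \emph{centralizes} $G_S$. First I would dispose of the degenerate case: if $S$ consists of a single vertex (or is empty), then since $h$ fixes $v$ we have $hS=S$, and there is nothing to prove. So from now on assume $S$ contains at least one edge.

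Because $S$ is connected and contains $v$, it contains an edge $e_0$ incident to $v$. Since $G_S$ fixes all of $S$ pointwise, in particular $G_S\leq G_{e_0}$. Now I would invoke the structure of the Bass--Serre tree of the HNN extension $G=A\ast_C$: every vertex stabilizer is a conjugate $gAg^{-1}$ of $A$, and the stabilizer of any edge incident to the vertex $gA$ is obtained from $C$ or $C^t$ by conjugating by an element of $A$ and then by $g$. Since $C$ and $C^t$ are central in $A$ by hypothesis, every $A$-conjugate of $C$ equals $C$ and every $A$-conjugate of $C^t$ equals $C^t$; hence $G_{e_0}$ is exactly $gCg^{-1}$ or $gC^tg^{-1}$, and in either case it lies in the center of $gAg^{-1}=G_v$. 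In other words, $G_{e_0}$ is central in $G_v$.

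Combining: $h$ fixes $v$, so $h\in G_v$, while $G_S\leq G_{e_0}\leq Z(G_v)$; therefore $h$ commutes with every element of $G_S$, so $hxh^{-1}=x$ for all $x\in G_S$, i.e.\ $hG_Sh^{-1}=G_S$. Finally, $G_{hS}=hG_Sh^{-1}$ (if $y$ fixes $hS$ pointwise then $h^{-1}yh$ fixes $S$ pointwise, and conversely), so $G_{hS}=G_S$, as claimed.

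The one step that carries all the weight is the centrality of edge stabilizers inside adjacent vertex stabilizers; this is exactly the point at which the standing hypothesis that $C$ and $C^t$ are central in $A$ is used, and it is the tree-theoretic shadow of that hypothesis. I expect the only real care needed is the bookkeeping about which $A$-conjugates of $C$ and of $C^t$ can occur as edge stabilizers at a given vertex, and this bookkeeping collapses entirely precisely because conjugation by $A$ fixes both $C$ and $C^t$.
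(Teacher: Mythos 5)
Your proof is correct and follows essentially the same route as the paper's: pick an edge $e$ of $S$ at $v$, observe $G_S\leq G_e$, use centrality of $G_e$ in $G_v$ to see that $h\in G_v$ commutes with $G_S$, and conclude $G_{hS}=hG_Sh^{-1}=G_S$. The only difference is that you spell out the Bass--Serre bookkeeping justifying why $G_e$ is central in $G_v$ and dispose of the edgeless case explicitly, both of which the paper leaves implicit.
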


\begin{proof}
Let $e$ be an edge in $S$ with endpoint $v$. Then $G_{S} \subset
G_e$. Since $G_e$ is central in $G_v$, $h$ commutes with $G_{S}$, and
therefore $G_{h S} = h G_{S} h^{-1} = G_{S}$. 
\end{proof}

Consider the $t$--exponent homomorphism $\phi \co G \to \Z$ (sending $t$
to $1$ and $A$ to $0$). There is an action of $\Z$ on $\R$ by integer
translations. Letting $G$ act on $\R$ via $\phi$, there is also a
$G$--equivariant map $F \co T \to \R$. This map is just the natural map from
$T$ to the universal cover of the quotient graph $T/G$. The action of an
element $g$ on $T$ projects by $F$ to a translation by $\phi(g)$ on
$\R$. We think of $F$ as a \emph{height function} on $T$. Then, the
elements of $t$--exponent zero act on $T$ by height-preserving
automorphisms. 

\begin{proposition}\label{stabilizer} 
If $S \subset T$ is a subtree and $\sigma \subset S$ is a finite
subtree such that $F(\sigma) = F(S)$ then $G_{\sigma} = G_{S}$. 
\end{proposition}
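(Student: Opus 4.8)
The plan is to establish the nontrivial inclusion $G_\sigma\subseteq G_S$ (the reverse is immediate since $\sigma\subseteq S$). I would fix $g\in G_\sigma$ and let $Y$ be the set of points of $T$ fixed by $g$; since the action is without inversions, $Y$ is a subtree, and $\sigma\subseteq Y$. It suffices to show $S\subseteq Y$. If not, there is a vertex $w$ of $S$ lying outside $Y$ but adjacent to it, so that the edge $e=[u,w]$ satisfies $u\in Y$ and $e\subseteq S$. Replacing $\sigma$ by the (finite) geodesic hull $\tau$ of $\sigma\cup\{u\}$, which lies in $Y\cap S$, I reduce to the following claim: \emph{if $g$ fixes a finite subtree $\tau$ pointwise, $e=[u,w]$ is an edge with $u\in\tau$, $w\notin\tau$, and $F(w)\in F(\tau)$, then $g$ fixes $w$.}

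To prove the claim I may assume $e$ is oriented upward, $F(w)=F(u)+1$ (the downward case is symmetric, interchanging the roles of the two edge groups). Since $F(u)+1=F(w)$ lies in the interval $F(\tau)$ and $\tau$ is connected, a geodesic in $\tau$ from $u$ to a vertex of height $F(u)+1$ reaches that height for the first time from a vertex $p\in\tau$ of height $F(u)$; thus $\tau$ contains a vertex $p$ at height $F(u)$ together with an up-edge $e'$ at $p$ (possibly $p=u$), and $g$ fixes $e'$.

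The heart of the argument is a transfer along the geodesic $[u,p]\subseteq\tau$, which $g$ fixes pointwise. In $BS(m,\ell)$ each vertex stabilizer $G_v$ is infinite cyclic; write $e_v\co G_v\to\Z$ for the exponent isomorphism (defined up to sign). Using that the edge groups $C$ and $C^t$ are central in $A$, the $G_v$-conjugates of a given edge stabilizer coincide, so all $\abs{\ell}$ up-edges at $v$ share the common stabilizer $\{h\in G_v:\ell\mid e_v(h)\}$ and all $\abs{m}$ down-edges share $\{h:m\mid e_v(h)\}$. If $g$ fixes an edge $[x,x']$ with $x'$ the upper endpoint, then $g$ lies in the up-edge stabilizer at $x$ and the down-edge stabilizer at $x'$, which are equal as subgroups of $G$; comparing the two exponent coordinates on this infinite cyclic group gives $e_{x'}(g)=\pm(m/\ell)\,e_x(g)$. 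Applying this across each edge of $[u,p]$ and using $F(p)=F(u)$ yields $e_p(g)=\pm(m/\ell)^{F(p)-F(u)}e_u(g)=\pm e_u(g)$. Since $g$ fixes an up-edge at $p$ we have $\ell\mid e_p(g)$, hence $\ell\mid e_u(g)$, so $g$ lies in the common stabilizer of the up-edges at $u$; in particular $g$ fixes $e=[u,w]$, and hence $w$, contradicting $w\notin Y$.

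The step I expect to be the main obstacle is this transfer. One must check carefully that $g$ fixing the entire path supplies exactly the divisibilities that make the repeated factor $m/\ell$ produce integers, and one must handle a general HNN extension $A\ast_C$ with $C,C^t$ central in $A$, where $G_v\cong A$ need not be cyclic. In that generality I would replace the exponent bookkeeping by producing a height-preserving element $\gamma$ with $\gamma u=p$, built as a composition of rotations about the successive ``valley'' vertices of $[u,p]$ (using Lemma~\ref{rotate} in spirit); because $g$ lies in the central edge groups along $[u,p]$ it commutes with each such rotation, hence with $\gamma$, and because $\gamma$ is height-preserving it carries the up-edge stabilizer at $u$ onto that at $p$, giving the same conclusion. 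Establishing the existence of $\gamma$ and its commutation with $g$ is the technical core.
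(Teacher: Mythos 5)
Your proof is correct in substance but follows a genuinely different route from the paper's. The paper's argument fixes $\sigma$ and builds $G_S$ up from $G_\sigma$: Case~I (when $\sigma$ is a segment on which $F$ is injective) inducts on the number of edges of each closure of a component of $S-\sigma$, and Case~II reduces a general finite $\sigma$ to a segment by folding edges together; Lemma~\ref{rotate} drives both steps. You instead fix $g\in G_\sigma$, pass to its fixed subtree $Y\supseteq\sigma$, and show $S\subseteq Y$ one edge at a time, localizing the work in a transfer along a height-balanced geodesic $[u,p]\subseteq\tau$. For $BS(m,\ell)$ your exponent bookkeeping is a nice concrete variant: the product of the $m/\ell$ and $\ell/m$ factors along $[u,p]$ telescopes to $1$ because $F(u)=F(p)$, and integrality is supplied at each step since $g$ actually fixes every edge of the path. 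Both proofs ultimately draw on the same structural fact, namely centrality of the edge groups in the vertex groups, but attach different inductions to it.

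Two small imprecisions in your general-case sketch. First, the rotations must be centered at local \emph{extrema} of $F$ on the path, not only at valleys; which kind arises depends on the shape of $[u,p]$. Second, the rotation centers cannot all be taken on the original geodesic $[u,p]$: after one rotation by some $h\in G_v$ the relevant geodesic becomes $[hu,p]$, whose interior vertices need not lie on $[u,p]$. The clean formulation is an induction on $d(u,p)$: pick an interior local extremum $v$ with path-neighbors $y,z$, rotate by $h\in G_v$ with $hy=z$ (so $d(hu,p)\leq d(u,p)-2$), note that $g$ commutes with $h$ because $g$ lies in the edge stabilizer $G_{[v,y]}$, which is central in $G_v$, hence $g$ still fixes $[hu,p]\subseteq h[u,v]\cup[v,p]$ pointwise, and apply the inductive hypothesis. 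The element $\gamma$ then accumulates as a product of these rotations and commutes with $g$ by construction, which carries $P_u$ onto $P_p$ as you intend. This is really Lemma~\ref{rotate} used as a reduction move, just organized around the path $[u,p]$ rather than around $\sigma$ as in the paper.
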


\begin{proof}
\emph{Case I: $\sigma$ is a segment mapped by $F$ injectively to $\R$.}  

First suppose that $S$ is a finite subtree. Let $\{S_i\}$ be the
subtrees obtained as the closures of the components of $S -
\sigma$. Then $G_{S} = G_{\sigma} \cap \bigcap_i G_{S_i}$. Fixing $i$, we
will prove by induction on the number of edges of $S_i$ that $G_{\sigma}
\subset G_{S_i}$. It then follows that $G_{\sigma} = G_{S}$. 

The base case is that $S_i$ is a single edge $e$ with one vertex $v$ on
$\sigma$. Since $F(S_i) \subset F(\sigma)$, there is an edge $e'$ on
$\sigma$ with endpoint $v$ and an element $h \in G_v$ taking $e$ to
$e'$. Indeed, edges incident to $v$ are in the same $G_{v}$--orbit if and
only if they have the same image under $F$.  By Lemma \ref{rotate} we
have that $G_{S_i} = G_{e'} \supset G_{\sigma}$. 

For larger $S_i$, let $e \in S_i$ be the edge with endpoint $v \in
\sigma$. Again there is an edge $e'$ on $\sigma$ with endpoint $v$ and an
element $h \in G_v$ taking $e$ to $e'$. Again, $G_{S_i} = G_{h S_i}$ by
Lemma \ref{rotate}. But now $h S_i = e' \cup S_i'$ where $S_i'$ has fewer
edges than $S_i$. By induction, $G_{\sigma} \subset G_{S_i'}$. Since
$G_{\sigma} \subset G_{e'}$, we now have $G_{\sigma} \subset (G_{e'} \cap
G_{S_i'}) = G_{S_i}$. 

Now consider an arbitrary subtree $S$. We need to show that $G_{\sigma}$
fixes $S$ pointwise. But every point $x$ in $S$ is in a finite subtree $S'$
containing $\sigma$, and $G_{\sigma}$ fixes $S'$ pointwise; hence
$G_{\sigma}$ fixes $x$. 

\emph{Case II: $\sigma$ is an arbitrary finite subtree of $S$.} Fixing
the image $F(\sigma)$, we proceed by induction on the number of edges of
$\sigma$. The base case is when this number is smallest, namely the
length of $F(\sigma)$. Then Case I applies. If there are more edges than
this, there must be a vertex $v\in \sigma$ and a pair of edges $e_0,
e_1\in \sigma$ incident to $v$, with $F(e_0) = F(e_1)$. 

Decompose $S$ into two subtrees $S = S_0 \cup S_1$ with $S_0 \cap S_1 =
\{v\}$ and $e_0 \in S_0$, $e_1 \in S_1$. Let $\sigma_i = S_i \cap
\sigma$. There is an element $h \in G_v$ such that $he_0 = e_1$. Let  
$\sigma' = h\sigma_0 \cup \sigma_1$ and $S' = hS_0 \cup S_1$. Note
that $\sigma'$ has fewer edges than $\sigma$. Also, $F(\sigma) =
F(\sigma')$ and $F(S) = F(S')$, and $G_{\sigma} = G_{\sigma_0} \cap
G_{\sigma_1} = G_{h\sigma_0} \cap G_{\sigma_1} = G_{\sigma'}$ by Lemma
\ref{rotate}. Similarly, $G_{S} = G_{S_0} \cap G_{S_1} = G_{hS_0} \cap
G_{S_1} = G_{S'}$. By the induction hypothesis, $G_{\sigma'} = G_{S'}$,
and therefore $G_{\sigma} = G_S$. 
\end{proof}

Now consider a hyperbolic element $g$ with $t$--exponent zero. The axis
$T_g$ has the property that $F(T_g)$ is a finite interval. To see this,
let $\gamma$ be a fundamental domain, and note that $T_g = \bigcup_n g^n
\gamma$. The $t$--exponent condition implies that $F(g^n \gamma) =
F(\gamma)$ for all $n$, and hence $F(T_g) = F(\gamma)$. 

\begin{definitions}
We call a vertex $v$ on $T_g$ \emph{extremal} if $F(v)$ is an endpoint of
$F(T_g)$. A segment $\sigma \subset T_g$ is \emph{stable} if $F(\sigma) =
F(T_g)$ and $\sigma$ contains no extremal vertex in its interior
(equivalently, no proper subsegment $\sigma'$ satisfies $F(\sigma') =
F(T_g)$). See Figure \ref{fig:stable}. Note that if $\sigma$ and $\tau$
are stable segments, then they do not overlap, unless they are equal. 
\begin{figure}[ht]
\labellist
\small\hair 2pt
\pinlabel {$T_g$} [c] at 75 19
\pinlabel {$F$} [b] at 312 30
\endlabellist
\centering
\includegraphics{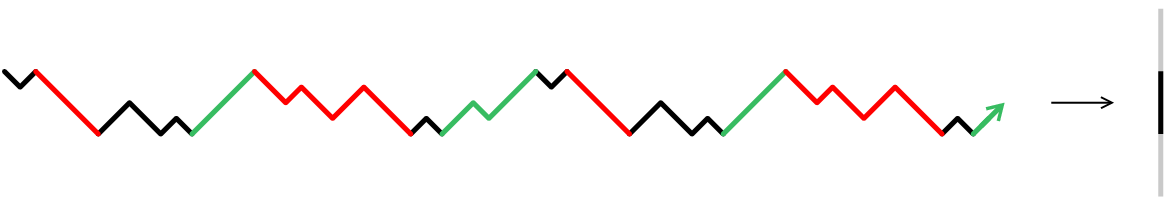}
\caption{Stable segments along $T_g$, in green and red.}\label{fig:stable}   
\end{figure}

The natural orientation of $T_g$ defines a linear ordering $<_g$ on
the stable segments of $T_g$. The ``larger''
end is the attracting end of $T_g$; that is, $\sigma <_g g\sigma$ always
holds. We say that $\sigma \leq_g \tau$ if $\sigma <_g \tau$ or $\sigma =
\tau$. 
\end{definitions}

\begin{remark}\label{cleanremark}
If $\gamma$ is a fundamental domain for $g$ whose endpoints are extremal,
then every stable segment either does not overlap with $\gamma$ or is 
contained in $\gamma$. Moreover, $\gamma$ contains a copy of every stable
segment. (Being a fundamental domain, it overlaps with a copy of every
non-trivial segment in $T_g$.)  If $\gamma$ is a fundamental domain that
starts with a stable segment then its endpoints are extremal, as the
endpoints of a stable segment are extremal. 
\end{remark}

Proposition \ref{stabilizer} immediately implies: 

\begin{corollary}\label{cor:stable}
If $\sigma \subset T_g$ is stable then $G_{\sigma} = G_{T_g}$. 
\end{corollary}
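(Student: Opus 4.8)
The plan is to obtain this directly from Proposition~\ref{stabilizer}, taking $S = T_g$ and letting the given stable segment play the role of the finite subtree $\sigma$ in that proposition. So the whole argument is a matter of checking that a stable segment satisfies the three hypotheses: that $T_g$ is a subtree of $T$, that $\sigma$ is a \emph{finite} subtree, and that $F(\sigma) = F(T_g)$.

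First I would note that the axis $T_g$ is by definition a (linear) subtree of $T$, so it is a legitimate choice for $S$. Next, since a segment is isometric to a compact interval of $\R$ and every edge of $T$ has length one, the stable segment $\sigma$ is a finite subtree of $T_g$; this supplies the finiteness hypothesis. Finally, the defining property of a stable segment is exactly that $F(\sigma) = F(T_g)$, which is the remaining hypothesis $F(\sigma) = F(S)$. Proposition~\ref{stabilizer} then gives $G_{\sigma} = G_{T_g}$, as claimed.

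I do not expect any real obstacle: all the substance is already contained in Proposition~\ref{stabilizer}, and the corollary is just the observation that stable segments are among the finite subtrees to which that proposition applies. If desired, one could additionally remark — using Remark~\ref{cleanremark} — that a stable segment is contained in a fundamental domain and hence has length at most $\abs{g}$, but this bound is not needed for the proof.
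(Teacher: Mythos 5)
Your proof is correct and is exactly the paper's argument: the paper states that Corollary~\ref{cor:stable} follows immediately from Proposition~\ref{stabilizer}, and you have simply spelled out the routine verification that $T_g$ is a subtree, that a stable segment is a finite subtree (indeed with vertex endpoints, since its endpoints are extremal vertices), and that the defining condition $F(\sigma) = F(T_g)$ is precisely the remaining hypothesis.
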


The main technical result of this section is: 

\begin{theorem}\label{infiniteoverlap}
Let $G = A \, \ast_C$ with stable letter $t$, and $C, C^t$ central in
$A$. Let $g\in G$ be a hyperbolic element with $t$--exponent zero. Then
either: 
\begin{enumerate}
\item there is a fundamental domain $\gamma$ for $g$ such that
  $h_{\gamma}(g) = 1$, or \label{one} 
\item there is an element $h$ such that $h(T_g) =
  \overline{T}_g$. \label{two} 
\end{enumerate}
\end{theorem}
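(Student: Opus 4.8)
The plan is to produce, whenever alternative~\eqref{two} fails, a fundamental domain $\gamma$ witnessing alternative~\eqref{one}. Fix a fundamental domain $\gamma$ for $g$ that \emph{begins with a stable segment} $\sigma_0$; by Remark~\ref{cleanremark} its endpoints are then extremal and $\gamma$ contains a copy of every stable segment of $T_g$. Since $\abs{\gamma} = \abs{g}$ we have $c_{\gamma}(g^n) = n$ for all $n$ (the $\langle g\rangle$--translates of $\gamma$ give $n$ non-overlapping copies in $T_{g^n}/\langle g^n\rangle$, and a length count shows there are no more), so
\[
 h_{\gamma}(g) \ = \ 1 - \lim_{n\to\infty}\frac{c_{\overline{\gamma}}(g^n)}{n}.
\]
If $c_{\overline{\gamma}}(g^n) = 0$ for every $n$, then $h_{\gamma}(g) = 1$ and alternative~\eqref{one} holds. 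Otherwise some $c_{\overline{\gamma}}(g^n)$ is positive; lifting a positively oriented copy of $\overline{\gamma}$ in $T_{g^n}/\langle g^n\rangle = T_g/\langle g^n\rangle$ produces $h \in G$ with $h\gamma \subset T_g$ traversed by $T_g$ in the \emph{reverse} orientation.

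The next step is to extract rigidity from this reversed copy. Because $\gamma$ begins with $\sigma_0$, the segment $h\sigma_0$ lies in $T_g$ and has the same length as $\sigma_0$; by $G$--equivariance of the height function, $F(h\sigma_0) = F(\sigma_0) + \phi(h) \subseteq F(T_g) = F(\sigma_0)$, forcing $\phi(h) = 0$. Thus $h$ is height-preserving and $h\sigma_0$ is again a stable segment of $T_g$. Now $hT_g = T_{hgh^{-1}}$ is an axis meeting $T_g$ in a segment $\rho \supseteq h\gamma$ of length $\geq \abs{g}$, along which the two axes carry opposite orientations; on $\rho$ the hyperbolic isometries $hgh^{-1}$ and $g^{-1}$ translate by the same amount in the same direction, so $g\cdot hgh^{-1}$ fixes pointwise a subsegment $E \subset \rho \subset T_g$ of length $\abs{\rho}-\abs{g}$.

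It remains to promote this local coincidence to a global one. If $\rho$ is infinite then $hT_g = T_g$, and since the orientations on the overlap are opposite, $hgh^{-1}$ translates $T_g$ oppositely to $g$ with the same amplitude; hence $hgh^{-1} = g^{-1}$ and $h(T_g) = \overline{T}_g$, so alternative~\eqref{two} holds. If $\rho$ is finite, both its endpoints are branch vertices of $T$, and I would use the special structure of $T$ --- rotating one outgoing branch onto another via an element of $G_v$ fixing the relevant end $v$ (as in the proof of Proposition~\ref{stabilizer}, using that edges at $v$ with the same $F$--image lie in one $G_v$--orbit), which does not disturb stabilizers by Lemma~\ref{rotate} --- to enlarge $\rho$ and, after possibly replacing $g$ by a power or sliding the reversed copy along $T_g$, to arrange that $F(E) = F(T_g)$. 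Once that holds, Proposition~\ref{stabilizer} gives $g\cdot hgh^{-1} \in G_E = G_{T_g}$, so $hgh^{-1}$ and $g^{-1}$ agree on all of $T_g$; since a hyperbolic isometry attains its translation length only on its axis, this forces $T_g = T_{hgh^{-1}} = hT_g$, hence $hgh^{-1} = g^{-1}$ and $h(T_g) = \overline{T}_g$, giving alternative~\eqref{two}.

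The crux --- and the only place where centrality of the edge groups $C$ and $C^t$ is used --- is this final, finite-overlap step: in a general $G$--tree a reversed copy of a fundamental domain does \emph{not} force $g$ to be conjugate to $g^{-1}$, and the work is in showing that here the branches of $T_g$ and $hT_g$ can be folded together (Lemma~\ref{rotate}, Proposition~\ref{stabilizer}, Corollary~\ref{cor:stable}) until the segment fixed by $g\cdot hgh^{-1}$ spans the full height interval $F(T_g)$.
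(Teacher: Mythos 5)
Your setup matches the paper's: pick a fundamental domain $\gamma$ starting with a stable segment, note that a failure of $h_{\gamma}(g)=1$ produces a height-preserving $h$ with $h\gamma$ sitting reversed in $T_g$, and aim to show conclusion~\eqref{two}. But from that point your argument diverges in ways that leave genuine gaps.

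First, you attempt to prove that $h_{\gamma}(g)<1$ \emph{for this single choice of $\gamma$} already forces conclusion~\eqref{two}. That is a stronger statement than the theorem, and the paper's proof indicates it is not expected to hold. The actual argument must eventually consider a \emph{second} fundamental domain $\gamma'$: if $h_{\gamma}(g)<1$ yields $h$ and the reversed copy $h\gamma$ overlaps $T_g$ in exactly the window $[\alpha,\beta]$ of stable segments (no stable segment of $T_g\cap h(T_g)$ lies outside it), then the information from $h$ alone is not enough. The paper then takes $\gamma'$ starting at $\beta_n$; if $h_{\gamma'}(g)=1$, alternative~\eqref{one} holds with $\gamma'$, and otherwise a second reversal element $k$ is produced and the identity $kh\alpha=g\alpha$, combined with Proposition~\ref{stabilizer}, forces a contradiction. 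Your one-pass structure has no mechanism to handle this borderline regime.

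Second, your treatment of the case ``$\rho$ infinite'' asserts that an infinite overlap forces $hT_g=T_g$. That is false for lines in a tree: two distinct bi-infinite geodesics can share a common ray. The correct route (implicit in the paper's Claim) is that an infinite overlap contains a stable segment $\rho'$ with $\beta<_g\rho'$, whence Corollary~\ref{cor:stable} applies to the element $ghgh^{-1}$ fixing a stable segment and yields $T_{hgh^{-1}}=\overline{T}_g$. But you never reach Corollary~\ref{cor:stable} in this branch.

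Third, and most seriously, the finite-$\rho$ ``enlarge the overlap'' step is not an argument. The segment $E$ fixed by $ghgh^{-1}$ has length $\abs{\rho}-\abs{g}$, which can be zero; there is no reason $F(E)=F(T_g)$ and no mechanism to force it. Rotating a branch of $T$ at an endpoint of $\rho$ by an element of $G_v$, as in Lemma~\ref{rotate} and Proposition~\ref{stabilizer}, preserves stabilizers but does \emph{not} move the axes $T_g$ or $hT_g$ and does not lengthen their intersection; replacing $g$ by a power only scales $\abs{g}$ and does not help; and ``sliding the reversed copy'' would replace $h$ by $g^kh$ which changes $h\gamma$ only by a translation along $T_g$, again not enlarging the overlap. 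What centrality is actually used for in the paper's proof is entirely different: since $h\alpha=\beta=h^{-1}\alpha$, the element $h^2$ fixes the stable segment $\alpha$, so by Proposition~\ref{stabilizer} it fixes $T_g\cup h(T_g)$ pointwise, making $h$ an honest involution of that subtree. That involution property, together with the enumeration of stable segments in $\gamma$ as $\alpha_1,\dots,\alpha_n,\beta_n,\dots,\beta_1$ paired by $h$ and the second reversal $k$, is the engine of the proof. Your proposal never observes or uses the involution, and the folding-to-coincidence idea it substitutes does not go through.
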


Conclusion \eqref{one} implies by Proposition~\ref{bavard} and
Theorem~\ref{defect} that $\scl(g) \geq 1/12$. 

\begin{proof}
  Let $\alpha \subset T_g$ be a stable segment and let $\gamma$ be the
  fundamental domain for $g$ that starts with $\alpha$. Note that
  $\gamma$ has extremal endpoints. If
  $h_{\gamma}(g) < 1$ then there is an element $h$ such that $h\gamma$
  lies in $T_g$ with the opposite orientation and overlaps with
  $\alpha$. Note that $h$ fixes a point in $\gamma$.  In
  particular $h$ is elliptic, and hence acts as a height-preserving
  automorphism of $T$.  Now $h\gamma$ is a fundamental domain for
  $g^{-1}$ with extremal endpoints, and so $h\gamma$ contains $\alpha$
  (which is stable for $T_{g^{-1}}$ as well as for $T_g$).

  The segment $\beta = h^{-1}\alpha$ is a stable segment for $T_g$
  contained in $\gamma$.  Clearly $\alpha \leq_g \beta$, and as the
  endpoints of $\alpha$ have different heights, $\alpha \neq \beta$; 
  therefore $\alpha <_g \beta$.  Note that $h\alpha = \beta$, since
  $h$ acts as a reflection on the segment $\gamma \cap h\gamma$.
  Hence the element $h^2$ fixes the stable segment $\alpha$. Therefore,
  by Proposition \ref{stabilizer}, $h^2$ fixes $T_g \cup h(T_g)$. That
  is, $h$ acts as an involution on this entire subtree of $T$.

\begin{claim}
  If there is a stable segment $\rho
  \subset T_g \cap h(T_g)$ such that either $\rho <_g \alpha$ or
  $\beta <_g \rho$, then conclusion \eqref{two} holds.
\end{claim}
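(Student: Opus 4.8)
The plan is to reduce to a single configuration, dispose of the easy sub-case where the overlap is already everything, and otherwise enlarge the overlap using the HNN structure of $G$. Recall that $h$ is elliptic, that $h^2$ fixes $T_g\cup h(T_g)$, and hence that $h(I)=I$ for $I:=T_g\cap h(T_g)$ while $h$ acts on $I$ as the reflection about its fixed point, reversing the order $<_g$ on the stable segments contained in $I$ (which it interchanges in pairs). In particular the two alternatives in the hypothesis are equivalent: if $\rho<_g\alpha$ then $h\rho$ is again a stable segment contained in $I$ — its image under $h$ is stable for $h(T_g)$, hence for $T_g$, since $F(T_g)=F(h(T_g))$ — and $h\rho>_g\beta$; symmetrically in the other case. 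So I may assume $\beta<_g\rho$, in which case also $h\rho<_g\alpha$ with $h\rho\subseteq I$. Since $\gamma=h^2\gamma\subseteq h(I)=I$, the fundamental domain $\gamma$ (which begins with $\alpha$) lies inside $I$; as $h\rho$ is disjoint from and precedes $\gamma$ while $\rho$ follows $\beta$, the segment $I$ contains stable segments strictly on either side of $\gamma$, and in particular $I$ is strictly longer than a fundamental domain for $g$.

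If $I=T_g$ then $h(T_g)=\overline{T}_g$ and conclusion~\eqref{two} holds with $h$ itself, so I may assume $I$ is a proper, hence finite, subsegment of $T_g$. The core of the argument is then to show that when $I$ contains stable segments on both sides of $\gamma$ the overlap can always be enlarged, so that a maximal such overlap must in fact be all of $T_g$. At an endpoint $Q$ of $I$ the axes $T_g$ and $h(T_g)$ diverge, continuing with distinct edges issuing from $Q$; because $h$ preserves the height function $F$ and $F(T_g)=F(h(T_g))$, these two edges have the same image under $F$, so by the orbit argument used in the proofs of Proposition~\ref{stabilizer} and Lemma~\ref{rotate} — which is exactly where centrality of $C$ and $C^t$ in $A$ enters — they lie in the same $G_Q$-orbit. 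Adjusting $h$ by a suitable element of $G_Q$, and correcting symmetrically at the other endpoint of $I$, produces an element $h'$ that again restricts to a reflection but now of a strictly longer subsegment of $T_g$. Since $F(T_g)$ is a fixed finite interval, this enlargement process must terminate, and at termination the overlap is all of $T_g$, so the resulting $h'$ satisfies $h'(T_g)=\overline{T}_g$, which is conclusion~\eqref{two}.

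The step I expect to require the most care is the enlargement itself: one must verify that the diverging edges genuinely have equal $F$-image (using that $Q$ lies in the interior of $T_g$ and that both axes carry the same height data), that the rotating element of $G_Q$ can be chosen compatibly at the two ends of $I$ so that the modified map is still an honest involution of a segment rather than merely a shift of the overlap, and that the bookkeeping of which stable segments survive into the enlarged overlap stays consistent — here Remark~\ref{cleanremark} and Corollary~\ref{cor:stable} do most of the routine work. The role of the stable segment $\rho$ is precisely to guarantee the room on both sides of $\gamma$ inside $I$ that makes the first enlargement possible and rules out the degenerate cases.
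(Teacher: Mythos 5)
Your reduction to the case $\beta <_g \rho$ by symmetry is the same as the paper's, and your observations that $\gamma \subset I := T_g \cap h(T_g)$ and $|I| > |g|$ are correct. The gap is in the enlargement step, and you have flagged exactly the right worry without resolving it. At the endpoint $Q$ of $I$, you propose to replace $h$ by $kh$ for some $k \in G_Q$ carrying the outgoing edge of $h(T_g)$ to the outgoing edge of $T_g$. For this to \emph{enlarge} the overlap, $k$ must also fix $I$; if it does not, $kh(T_g) \cap T_g$ can actually be \emph{shorter} than $I$. Since $I$ contains a stable segment, Corollary~\ref{cor:stable} forces $G_I = G_{T_g}$, an edge group, and there is no reason an edge group should act transitively on the relevant $G_Q$-orbit of outgoing edges. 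Concretely, in $BS(m,\ell)$ with $\gcd(m,\ell) > 1$ the product of the two edge groups in a vertex group $\langle a\rangle$ is the proper subgroup $\langle a^{\gcd(m,\ell)}\rangle$, so the required $k$ generally does not exist. The same obstruction arises in your ``correcting symmetrically at the other endpoint'' step, and the termination argument is also not clear, since $T_g$ is infinite and there is no monotonicity once a step might shrink the overlap.

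The paper sidesteps enlargement altogether. It takes $\sigma$ to be the $<_g$-smallest stable segment in $h\gamma$ and observes that $g\sigma$ is the stable segment immediately succeeding $\beta$, so $\beta <_g g\sigma \leq_g \rho$ and therefore $g\sigma \subset I$; this is the single place the hypothesis on $\rho$ is used. Since $hgh^{-1}$ translates $h(T_g)$ in the direction opposite to $g$ and carries $g\sigma$ to $\sigma$, the element $ghgh^{-1}$ fixes the stable segment $g\sigma$, and Corollary~\ref{cor:stable} then forces it to fix all of $T_g$, giving $h(T_g) = \overline{T}_g$ directly with the original $h$. One application of Corollary~\ref{cor:stable} replaces your entire iterative process and avoids the transitivity problem your argument runs into.
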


  \begin{proof}[Proof of Claim]
     Since $h$ acts as a reflection on the segment $T_g \cap h(T_g)$, if
     $\rho \subset T_g \cap h(T_g)$ and $\rho <_g \alpha$, then $h\rho
     \subset T_g \cap h(T_g)$ and $\beta <_g h\rho$. Thus we only need to
     verify the claim in the $\beta <_g \rho$ case. 

    Let $\sigma$ be the $<_g$--smallest stable segment in $h\gamma$.
    Observe that $\sigma \subset T_g \cap h(T_g)$.
    The translate $g\sigma$ is the $<_g$--smallest stable segment in
    $gh\gamma$, which has a common endpoint with $\beta$.  Hence
    $g\sigma \leq_g \tau$ for any $\tau$ satisfying $\beta <_g \tau$.
    In particular, $g\sigma \leq_g \rho$.  Since $\beta,\rho \subset
    T_g \cap h(T_g)$ and $\beta <_g g\sigma \leq_g \rho$, it follows
    that $g\sigma \subset T_g \cap h(T_g)$.   

    Note that $h(T_g) = T_{hgh^{-1}}$, and $hgh^{-1}$ takes $g\sigma$
    to $\sigma$. Thus $ghgh^{-1}$ fixes $g\sigma$. Since this is a
    stable segment, $ghgh^{-1}$ must fix all of $T_g$ by
    Corollary~\ref{cor:stable}. This implies that $hgh^{-1}$ acts on
    $T_g$ as a translation, of the same amplitude but opposite
    direction as $g$. Hence $T_{hgh^{-1}} = \overline{T}_g$.
  \end{proof}

  Returning to the proof of Theorem \ref{infiniteoverlap}, 
  assume that the Claim does not apply. Then $\alpha$ and $\beta$ are
  the $<_g$--smallest and $<_g$--largest stable segments in $T_g \cap
  h(T_g)$ respectively. It follows that $\beta$ is also the
  $<_g$--largest stable segment in $\gamma$; otherwise, if $\beta <_g
  \rho$ and $\rho \subset \gamma$, then $h\rho <_g \alpha$ and $h\rho
  \subset T_g \cap h(T_g)$, contradicting that $\alpha$ is smallest. 

  Note that $h$ takes stable segments to stable segments, and does not take
  any stable segment to itself (since the endpoints have different
  heights). Hence the stable segments of $\gamma$ may be enumerated in
  order as $\alpha =   \alpha_1, \ldots, \alpha_n, \beta_n, \ldots,
  \beta_1 = \beta$ where $h$ interchanges $\alpha_i$ and $\beta_i$. 

Now let $\gamma'$ be the fundamental domain for $g$ starting with
$\beta_n$. Assuming that conclusion \eqref{one} does not hold, we have
$h_{\gamma'}(g) <1$, and so there is an elliptic element $k$ such that
$k\gamma'$ lies in $T_g$ with the opposite orientation and contains
$\beta_n$. 

The configuration of $T_g$, $k(T_g)$, $\beta_n$, and $k\beta_n$ is
exactly analogous to that of $T_g$, $h(T_g)$, $\alpha$, and $\beta$. In
particular, the Claim is applicable to this situation. If the Claim does
not apply, then we conclude as above that $\beta_n$ and $k\beta_n$ are
the $<_g$--smallest and $<_g$--largest stable segments in $T_g \cap k(T_g)$
and that $k\beta_n$ is the $<_g$--largest stable segment in $\gamma'$. 

The stable segments in $\gamma'$ are, in order: $\beta_n,
\ldots, \beta_1, g\alpha_1, \ldots, g\alpha_n$, and the element $k$
interchanges $\beta_i$ and $g\alpha_i$. Thus 
\[kh\alpha = kh\alpha_1 = k\beta_1 = g\alpha_1 = g\alpha.\]
Since $kh$ and $g$ agree on the stable
segment $\alpha$, they agree on all of $T_g \cup h(T_g) \cup k(T_g)$, by
Proposition \ref{stabilizer}. Similarly, $h$ and $k$ both act as
involutions on $T_g \cup h(T_g) \cup k(T_g)$. Now 
\[ h g^{-1} \beta = h (kh)^{-1} \beta = h h k \beta = 
k\beta = g\alpha,\]
which implies that $g\alpha \subset T_g \cap h(T_g)$. However, $\beta
<_g g\alpha$ and $\beta$ is the $<_g$--largest stable segment in $T_g
\cap h(T_g)$. This contradiction establishes the theorem. 
\end{proof}

The next proposition concerns conclusion \eqref{two} in Theorem
\ref{infiniteoverlap}. It is a variant of the observation that if an
element is conjugate to its inverse, then it has $\scl$ zero. 

\begin{proposition}\label{sclzero}
Suppose $G$ acts on a tree $T$ and $\scl$ vanishes on the elliptic
elements of $G$. If $g$ is hyperbolic and there is an element $h$ such
that $h(T_g) = \overline{T}_g$, then $\scl(g) = 0$. 
\end{proposition}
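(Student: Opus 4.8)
The plan is to show that the hypothesis $h(T_g)=\overline{T}_g$ forces $g$ to be ``almost conjugate to its inverse'' — precisely, that $ghgh^{-1}$ is elliptic — after which the conclusion follows by exactly the reasoning that proves ``$g$ conjugate to $g^{-1}$ implies $\scl(g)=0$'', with the trivial element there replaced by an elliptic one. For the geometric step: since $g$ is hyperbolic with axis $T_g$ and $h$ carries the oriented line $T_g$ to itself reversing its orientation, the conjugate $hgh^{-1}$ is hyperbolic with the same axis $T_g$, the same translation length, and translation direction opposite to that of $g$. A line admits a unique translation of a given signed amplitude, so $hgh^{-1}$ and $g^{-1}$ induce the same isometry of $T_g$; hence $ghgh^{-1}$ fixes $T_g$ pointwise and is elliptic. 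The same reasoning applied to $g^n$ in place of $g$ shows that $g^n\,hg^nh^{-1}$ is elliptic for every $n$. By hypothesis $\scl$ vanishes on all of these elements, hence also on their inverses.

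To finish I would pass to the space of chains $B_1^H(G)$, on which $\scl$ is defined and homogeneous (see \cite{Calegari:scl}). There, conjugate elements represent the same chain, and the loop representing a product $uv$ represents the sum of the loops $u$ and $v$; therefore
\[ 2\,\scl(g)\ =\ \scl\bigl(g+hgh^{-1}\bigr)\ =\ \scl\bigl(ghgh^{-1}\bigr)\ =\ 0, \]
the last equality because $ghgh^{-1}$ is elliptic. This is the promised variant of the self-inverse observation.

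An alternative that stays entirely inside this paper builds surfaces directly. For each $n$, take a pair of pants $P$ with $\chi(P)=-1$ whose three boundary loops map to $g^n$, $hg^nh^{-1}$, and $(g^nhg^nh^{-1})^{-1}$ — consistent since the product of these three elements is trivial. Because $g^n$ and $hg^nh^{-1}$ are conjugate, there is an annulus (as in the proof of Lemma~\ref{scl-a}) whose oriented boundary loops represent $g^n$ and $(hg^nh^{-1})^{-1}$; gluing it to $P$ along the $hg^nh^{-1}$ boundary yields a surface $S$ with $\chi(S)=-1$, two boundary components mapping to $g^n$ with total degree $2n$, and one boundary component mapping to the elliptic element $(g^nhg^nh^{-1})^{-1}$. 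Since $\scl$ of that element is $0$, Lemma~\ref{fill-with-zero} gives $\scl(g)\leq\frac{1}{4n}$, and letting $n\to\infty$ gives $\scl(g)=0$. In either route, the one point to watch is that the naive inequality $\scl(ab)\leq\scl(a)+\scl(b)+\frac12$ is useless here; one must use homogeneity of $\scl$ — through the chain identity above, or, in the surface picture, through the annulus that reassigns the $hg^nh^{-1}$ boundary to the $g^n$ class so that it contributes to $n(S)$ rather than being capped off at a cost. Everything else is routine.
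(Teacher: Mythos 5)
Your second (surface) route is correct and is essentially the paper's argument: take the genus-zero, three-holed sphere realizing $g^n\cdot hg^nh^{-1}\cdot(g^nhg^nh^{-1})^{-1}=1$, observe that two of the boundary components represent (the conjugacy class of) $g^n$ while the third maps to an elliptic element, and apply Lemma~\ref{fill-with-zero} to obtain $\scl(g)\leq \tfrac{1}{4n}$. Your extra annulus is redundant but harmless --- the pair of pants already has two boundary components in the free homotopy class of $\gamma^n$, which is all Definition~\ref{def1} requires --- so this differs from the paper only cosmetically. The geometric preamble showing that $g^n h g^n h^{-1}$ fixes $T_g$ pointwise (hence is elliptic) is also correct and matches the paper.

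Your first (chain) route, however, has a genuine gap. The claimed identity
\[
\scl\bigl(g+hgh^{-1}\bigr)\ =\ \scl\bigl(ghgh^{-1}\bigr)
\]
in $B_1^H(G)$ is false. The product $uv$ and the chain $u+v$ are \emph{not} identified in $B_1^H(G)$: that quotient kills only $g^n-ng$ and $g-hgh^{-1}$, not the Hopf-type relation $uv-u-v$. A concrete counterexample lives in $F_2=\langle a,b\rangle$: with $u=a$ and $v=ba^{-1}b^{-1}$ we have $uv=[a,b]$, so your identity would give $\scl([a,b])=\scl(a+ba^{-1}b^{-1})=\scl(a+a^{-1})=0$, whereas in fact $\scl([a,b])=1/2$. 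What is true is the \emph{inequality} $\scl(u+v)\leq\scl(uv)+\tfrac12$, coming from the pair of pants; combined with passing to $g^n$ this does give $\scl(g)\leq\tfrac{1}{4n}$ --- but at that point you have simply reproduced the surface argument, and the claimed ``chain identity'' does no work. So the first route, as written, is not a valid alternative proof; the second route is the one that actually closes the argument, and it is the paper's.
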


\begin{proof}
Since $h(T_g) = T_{hgh^{-1}}$, the element $ghgh^{-1}$ fixes
$T_g$ pointwise. Similarly, $g^n h g^n h^{-1}$ fixes $T_g$ for every
$n$. Thus there are elliptic elements $a_n$ such that $g^n hg^nh^{-1} 
= a_n$. This equation can be realized by a surface of genus zero and
three boundary components, labeled by $g^n$, $g^n$, and $a_n^{-1}$
respectively. Lemma~\ref{fill-with-zero} now implies that
  \begin{equation*}
    \scl(g) \ \leq \ \frac{1}{4n} + \frac{\scl(a_n^{-1})}{2n}.
  \end{equation*}
Hence $\scl(g) \leq 1/4n$ for all $n > 0$. 
\end{proof}

\begin{theorem}[Gap theorem]\label{th:gap}
  For every element $g \in BS(m, \ell)$, either $\scl(g) = 0$ or
  $\scl(g) \geq 1/12$. 
\end{theorem}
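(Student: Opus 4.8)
The plan is to deduce the theorem by assembling the structural results of this section with the case analysis of Theorem~\ref{infiniteoverlap}. First observe that $BS(m,\ell)$ is precisely an HNN extension $\langle a \rangle \, \ast_C$ with stable letter $t$, where the associated subgroups $C = \langle a^m \rangle$ and $C^t = \langle a^\ell \rangle$ are (trivially) central in the abelian vertex group $\langle a \rangle$. Hence everything from Lemma~\ref{rotate} through Proposition~\ref{sclzero} applies, with $T$ the associated Bass--Serre tree. I would then split into cases according to the $t$--exponent of $g$, and, when this vanishes, according to whether $g$ acts elliptically or hyperbolically on $T$.

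If $g$ has nonzero $t$--exponent then $\scl(g) = \infty$ by Remark~\ref{t-exp}, and there is nothing to prove. If $g$ has $t$--exponent zero and is elliptic, then $g$ fixes a vertex of $T$ and so is conjugate to a power of $a$; since $\scl(a) = 0$ by Lemma~\ref{scl-a} and $\scl$ is a homogeneous conjugacy invariant, $\scl(g) = 0$. In particular this shows that $\scl$ vanishes on \emph{every} elliptic element of $BS(m,\ell)$, which is exactly the hypothesis required to invoke Proposition~\ref{sclzero}.

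The remaining case is that $g$ has $t$--exponent zero and is hyperbolic, and here I would apply Theorem~\ref{infiniteoverlap}. In conclusion~\eqref{one} of that theorem there is a fundamental domain $\gamma$ for $g$ with $h_{\gamma}(g) = 1$; by Theorem~\ref{defect}, $h_{\gamma}$ is a homogeneous quasimorphism of defect at most $6$, so Proposition~\ref{bavard} gives $\scl(g) \geq 1/12$. In conclusion~\eqref{two} there is an element $h$ with $h(T_g) = \overline{T}_g$; since $\scl$ vanishes on elliptic elements by the previous paragraph, Proposition~\ref{sclzero} applies and yields $\scl(g) = 0$. In either case the claimed dichotomy holds, completing the proof.

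The work here is purely one of assembly: all the genuine content — both the construction of the efficient quasimorphisms $h_\gamma$ of defect $6$ and the delicate combinatorial argument producing the dichotomy between $h_\gamma(g) = 1$ and $h(T_g) = \overline{T}_g$ — is already carried out in Theorem~\ref{defect} and Theorem~\ref{infiniteoverlap}. The only point requiring a moment's care is verifying the hypothesis of Proposition~\ref{sclzero}, namely that $\scl$ vanishes on elliptic elements; this is immediate once one recalls that elliptic elements of $BS(m,\ell)$ are conjugate to powers of $a$ and invokes Lemma~\ref{scl-a}.
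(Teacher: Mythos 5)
Your proof is correct and follows the same route as the paper: reduce to elliptic versus hyperbolic elements, dispose of elliptics via Lemma~\ref{scl-a}, and handle hyperbolics of $t$--exponent zero by combining the dichotomy of Theorem~\ref{infiniteoverlap} with Theorem~\ref{defect}, Proposition~\ref{bavard}, and Proposition~\ref{sclzero}. Your write-up is somewhat more explicit than the paper's about the nonzero $t$--exponent case (where $\scl = \infty \geq 1/12$) and about verifying the hypothesis of Proposition~\ref{sclzero}, but the substance is identical.
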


\begin{proof}
Every elliptic element $g$ is conjugate to a power of $a$, and therefore
$\scl(g) = 0$ for elliptic elements by Lemma \ref{scl-a}. If $g$ is hyperbolic then Theorem
\ref{infiniteoverlap} and Proposition \ref{sclzero} imply that $\scl(g) =
0$ or $\scl(g) \geq 1/12$. 
\end{proof}

\begin{remark}
  If $m$ and $\ell$ are both odd, then conclusion \eqref{two} of
  Theorem \ref{infiniteoverlap} can never occur, since the element $h$
  would fix a vertex of $T_g$ and exchange two adjacent edges,
  yielding an element of order two in $\Z/m\Z$ or
  $\Z/\ell\Z$. Therefore, $\scl(g) \geq 1/12$ for \emph{every}
  hyperbolic element $g$ in $BS(m, \ell)$.

  If either $m$ or $\ell$ is even, say $m = 2k$, then for $g = ta^k
  t^{-1} a^r ta^k t^{-1} a^s \in BS(m,\ell)$ where $r+s=\ell$ we have
  $\scl(g) = 0$.  Indeed, taking $h = ta^{-k}t^{-1}$ one checks that
  $ghgh^{-1} = a^{4\ell} \in \langle a^\ell \rangle = G_{T_g}$. Thus
  $h(T_g) = \overline{T}_g$ and so by Proposition~\ref{sclzero} we
  have $\scl(g) = 0$.
\end{remark}

\bibliographystyle{amsplain}
\bibliography{bib}

\providecommand{\bysame}{\leavevmode\hbox to3em{\hrulefill}\thinspace}
\providecommand{\MR}{\relax\ifhmode\unskip\space\fi MR }
% \MRhref is called by the amsart/book/proc definition of \MR.
\providecommand{\MRhref}[2]{%
  \href{http://www.ams.org/mathscinet-getitem?mr=#1}{#2}
}
\providecommand{\href}[2]{#2}
\begin{thebibliography}{10}

\bibitem{Bavard:duality}
Christophe Bavard, \emph{Longueur stable des commutateurs}, Enseign. Math. (2)
  \textbf{37} (1991), no.~1-2, 109--150. \MR{1115747 (92g:20051)}

\bibitem{BBF}
Mladen Bestvina, Ken Bromberg, and Koji Fujiwara, \emph{Stable commutator
  length on mapping class groups}, preprint, arxiv:1306.2394v1.

\bibitem{BF}
Mladen Bestvina and Koji Fujiwara, \emph{Bounded cohomology of subgroups of
  mapping class groups}, Geom. Topol. \textbf{6} (2002), 69--89 (electronic).
  \MR{1914565 (2003f:57003)}

\bibitem{BCF}
Noel Brady, Matt Clay, and Max Forester, \emph{Turn graphs and extremal
  surfaces in free groups}, Topology and geometry in dimension three, Contemp.
  Math., vol. 560, Amer. Math. Soc., Providence, RI, 2011, pp.~171--178.
  \MR{2866930 (2012h:57001)}

\bibitem{Brooks:qm}
Robert Brooks, \emph{Some remarks on bounded cohomology}, Riemann surfaces and
  related topics: {P}roceedings of the 1978 {S}tony {B}rook {C}onference
  ({S}tate {U}niv. {N}ew {Y}ork, {S}tony {B}rook, {N}.{Y}., 1978), Ann. of
  Math. Stud., vol.~97, Princeton Univ. Press, Princeton, N.J., 1981,
  pp.~53--63. \MR{624804 (83a:57038)}

\bibitem{Calegari:scl}
Danny Calegari, \emph{scl}, MSJ Memoirs, vol.~20, Mathematical Society of
  Japan, Tokyo, 2009. \MR{2527432}

\bibitem{Calegari:free}
\bysame, \emph{Stable commutator length is rational in free groups}, J. Amer.
  Math. Soc. \textbf{22} (2009), no.~4, 941--961. \MR{2525776}

\bibitem{Calegari:sails}
\bysame, \emph{scl, sails, and surgery}, J. Topol. \textbf{4} (2011), no.~2,
  305--326. \MR{2805993 (2012g:57004)}

\bibitem{CF}
Danny Calegari and Koji Fujiwara, \emph{Stable commutator length in
  word-hyperbolic groups}, Groups Geom. Dyn. \textbf{4} (2010), no.~1, 59--90.
  \MR{2566301 (2011a:20109)}

\bibitem{Collins}
Donald~J. Collins, \emph{On embedding groups and the conjugacy problem}, J.
  London Math. Soc. (2) \textbf{1} (1969), 674--682. \MR{0252489 (40 \#5709)}

\bibitem{DH}
Andrew~J. Duncan and James Howie, \emph{The genus problem for one-relator
  products of locally indicable groups}, Math. Z. \textbf{208} (1991), no.~2,
  225--237. \MR{1128707 (92i:20040)}

\bibitem{EF}
David B.~A. Epstein and Koji Fujiwara, \emph{The second bounded cohomology of
  word-hyperbolic groups}, Topology \textbf{36} (1997), no.~6, 1275--1289.
  \MR{1452851 (98k:20088)}

\bibitem{Fujiwara:gromov}
Koji Fujiwara, \emph{The second bounded cohomology of a group acting on a
  {G}romov-hyperbolic space}, Proc. London Math. Soc. (3) \textbf{76} (1998),
  no.~1, 70--94. \MR{1476898 (99c:20072)}

\bibitem{Fujiwara:amalgam}
\bysame, \emph{The second bounded cohomology of an amalgamated free product of
  groups}, Trans. Amer. Math. Soc. \textbf{352} (2000), no.~3, 1113--1129.
  \MR{1491864 (2000j:20098)}

\bibitem{Grigorchuk:qm}
R.~I. Grigorchuk, \emph{Some results on bounded cohomology}, Combinatorial and
  geometric group theory ({E}dinburgh, 1993), London Math. Soc. Lecture Note
  Ser., vol. 204, Cambridge Univ. Press, Cambridge, 1995, pp.~111--163.
  \MR{1320279 (96j:20073)}

\bibitem{Louwsma:thesis}
Joel Louwsma, \emph{Extremality of the rotation quasimorphism on the modular
  group}, Ph.D. thesis, California Institute of Technology,
  \href{http://resolver.caltech.edu/CaltechTHESIS:05132010-155930760}{CaltechTHESIS:05132010-155930760},
  2011.

\bibitem{Lyndon-Schupp}
Roger~C. Lyndon and Paul~E. Schupp, \emph{Combinatorial group theory}, Classics
  in Mathematics, Springer-Verlag, Berlin, 2001, Reprint of the 1977 edition.
  \MR{1812024 (2001i:20064)}

\bibitem{Rhemtulla}
A.~H. Rhemtulla, \emph{A problem of bounded expressibility in free products},
  Proc. Cambridge Philos. Soc. \textbf{64} (1968), 573--584. \MR{0225889 (37
  \#1480)}

\bibitem{sage}
W.\thinspace{}A. Stein et~al., \emph{{S}age {M}athematics {S}oftware ({V}ersion
  5.11)}, The Sage Development Team, 2013, {\tt http://www.sagemath.org}.

\end{thebibliography}

\end{document}